\newtheorem{thm}{Theorem}[section]
\newtheorem{lemma}[thm]{Lemma}
\newtheorem{corollary}[thm]{Corollary}
\newtheorem{prop}[thm]{Proposition}
\theoremstyle{definition}
\newtheorem{rem}[thm]{Remark}
\newtheorem{defn}[thm]{Definition}
\newcommand{\isom}{\overset{\sim}{\rightarrow}}
\def\ge{{\geqq}}
\title{Deformations of trianguline $B$-pairs.}
\author{Kentaro Nakamura}
\date{} 
\begin{document}

\maketitle
\pagestyle{plain}
\footnote{2008 Mathematical Subject Classification 11F80 (primary), 11F85, 11S25 (secondary).
Keywords: $p$-adic Hodge theory, trianguline representations, B-pairs.}
\begin{abstract}
The aim of this article is to study deformation theory of 
trianguline $B$-pairs for any $p$-adic field. For benign $B$-pairs, a special 
good class of trianguline $B$-pairs, we prove a main theorem concerning 
tangent spaces of these deformation spaces. These are generalizations 
of Bella\"iche-Chenevier's and Chenevier's works in the case of $K=\mathbb{Q}_p$,
 where they used ($\varphi,\Gamma$)-modules over Robba ring instead of using $B$-pairs. 
 The main theorem, the author hopes, will play crucial roles in some problems of Zariski density 
 of modular points or of crystalline points in some deformation spaces 
of global or local $p$-adic  Galois representations.
\end{abstract}
\setcounter{tocdepth}{1}
\tableofcontents

\section{Introduction.}
\subsection{Background.}Let $p$ be a prime number and $K$ be a $p$-adic field, i.e. finite 
extension of $\mathbb{Q}_p$. Trianguline representations, which form a class of $p$-adic representations 
of $G_K:=\mathrm{Gal}(\bar{K}/K)$, play some crucial roles in the study of $p$-adic family of global or local 
Galois representations. The aim of this article is, for any $p$-adic field $K$, to develop the deformation 
theory of trianguline representations by using $B$-pairs.

In the study of $p$-adic local Langlands correspondence for $\mathrm{GL}_2(\mathbb{Q}_p)$, in [Co08], Colmez 
defined the notion of trianguline representations, which is a class of $p$-adic representations of $G_K:=\mathrm{Gal}(\bar{K}/K)$ 
and is defined by using Fontaine, Kedlaya's theory of ($\varphi,\Gamma$)-modules over the Robba ring.
Since his work, when $K=\mathbb{Q}_p$, trianguline representations and their deformation theoretic properties have turned out to be 
very important in number theory, especially in the study of $p$-adic local Langlands correspondence by Colmez ([Co09]), or in the study of $p$-adic 
families of $p$-adic modular forms (more precisely, in the study of eigenvarieties), for example, by Kisin ([Ki03]), by Bella\"iche-Chenevier ([Bel-Ch]) and by Chenevier ([Ch08],[Ch09]). 

For any $p$-adic field $K\not= \mathbb{Q}_p$ case, in [Na09], the author of this article generalized 
many results in [Co08] for any $p$-adic fields. Namely,  he studied some fundamental properties of trianguline representations and then classified two dimensional trianguline representations. In [Na09], we studied trianguline representations by using 
$B$-pairs, which was defined by Berger in [Be08], instead of using ($\varphi,\Gamma$)-modules over the Robba ring.
The aim of this article is, by using the results in [Na09] and by using the theory of $B$-pairs, to generalize many results in [Bel-Ch], [Ch08], [Ch09] concerning to 
deformations of trianguline representations to any $p$-adic fields. We develop the deformation theory of 
trianguline representations (more generally, trianguline $B$-pairs) and generalize Chenevier's theorem in [Ch09] 
concerning tangent spaces of these deformation rings to any $p$-adic fields. 

When $K=\mathbb{Q}_p$, these results are crucial 
for $p$-adic Hodge theoretic study of eigenvarieties, especially for problems about Zariski density of modular Galois representations 
(or crystalline representations) in deformation spaces of global (or local) $p$-adic representations ([Ch08].[Ch09],[Co08],[Ki08b]).
In the next article, by using the results of this article (and by generalizing some results in [Ki03] to any $p$-adic fields), 
the author will prove Zariski density of crystalline representations in deformation spaces of two dimensional 
$p$-adic representations for any $p$-adic fields.

  \subsection{Overview.}
 
 We explain details of this article.
 
 In $\S$ 2, we recall the definition of $B$-pairs 
 and study some fundamental properties of trianguline 
 $B$-pairs.
 Let $E$ be a suitable finite extension of $\mathbb{Q}_p$ as in Notation below. In $\S$ 2.1, we recall the definition of $E$-$B$-pairs of $G_K$, which is the $E$-coefficient version of $B$-pairs. We write $B_e:=B_{\mathrm{cris}}^{\varphi=1}$. 
 An $E$-$B$-pair is a pair $W=(W_e,W^+_{\mathrm{dR}})$ where $W_e$ is a finite free $B_e\otimes_{\mathbb{Q}_p}E$-module 
 with a continuous semi-linear $G_K$-action such that $W^+_{\mathrm{dR}}\subseteq W_{\mathrm{dR}}:=B_{\mathrm{dR}}\otimes_{B_e}W_e$ 
 is a $G_K$-stable $B^+_{\mathrm{dR}}\otimes_{\mathbb{Q}_p}E$-lattice of $W_{\mathrm{dR}}$. The category of $E$-representations 
 of $G_K$ is embedded in the category of $E$-$B$-pairs by $V\mapsto W(V):=(B_e\otimes_{\mathbb{Q}_p}V, B^+_{\mathrm{dR}}\otimes_{\mathbb{Q}_p}V)$. 
 We say that an $E$-$B$-pair is split trianguline if $W$ is a successive extension of rank one $E$-$B$-pairs, i.e. $W$ has a filtration 
 $0\subseteq W_1\subseteq W_2\subseteq \cdots \subseteq W_{n-1}\subseteq W_n=W$ such that $W_i$ is a saturated sub $E$-$B$-pair 
 of $W$ and $W_i/W_{i-1}$ is a rank one $E$-$B$-pair for any $1\leqq i\leqq n$. We say that $W$ is trianguline 
 if $W\otimes_{E}E'$ is a split trianguline $E'$-$B$-pair for a finite extension $E'$ of $E$. We say that an $E$-representation $V$ is split trianguline 
 (resp. trianguline) if $W(V)$ is split trianguline (resp. trianguline). By these definitions, to study trianguline $E$-$B$-pairs, we first need to classify 
 rank one $E$-$B$-pairs and then we need to calculate extension class group of them, which were studied in [Co08] for $K=\mathbb{Q}_p$ and in 
 [Na09] for general $K$. In $\S$ 2.1, we recall these results which we need to study deformations of trianguline $E$-$B$-pairs.
In $\S$ 2.2, we define the Artin ring coefficient version of $B$-pairs. Let $\mathcal{C}_E$ be the category of Artin local $E$-algebras with residue field $E$. For any $A\in \mathcal{C}_E$, we say that $W:=(W_e,W^+_{\mathrm{dR}})$ is an $A$-$B$-pair if $W_e$ is a finite free $B_e\otimes_{\mathbb{Q}_p}A$-module with a continuous semi-linear $G_K$-action and 
 $W^+_{\mathrm{dR}}\subseteq W_{\mathrm{dR}}:=B_{\mathrm{dR}}\otimes_{B_e}W_e$ is a $G_K$-stable $B^+_{\mathrm{dR}}\otimes_{\mathbb{Q}_p}A$-lattice. We generalize some results of [Na09] to $A$-$B$-pairs.
 
 In $\S$ 3, we study two types of deformations for split trianguline $E$-$B$-pairs. In $\S$ 3.1, first we study usual 
 deformations for any $E$-$B$-pairs which are generalizations of homomorphismistic zero deformations
 of $p$-adic Galois representations. 
 Let $W$ be an $E$-$B$-pair and $A\in \mathcal{C}_E$. 
 Then we say that $(W_A,\iota)$ is a deformation of $W$ over $A$ if $W_A$ is an $A$-$B$-pair and $\iota:W_A\otimes_A E\isom W$ is an 
 isomorphism. Then we define the deformation functor of $W$, $D_W:\mathcal{C}_E\rightarrow (\mathrm{Sets})$ by 
 $D_W(A):=\{$equivalent classes of deformations $(W_A,\iota)$ of $W$ over $A$ $\}$. We prove pro-representability and formally smoothness and dimension formula 
 of $D_W$ (Corollary $\ref{13}$).  In $\S$ 3.2, we study the other type of deformations, i.e. trianguline deformations. Let $W$ be a split trianguline $E$-$B$-pair of rank $n$ and 
 $\mathcal{T}:0\subseteq W_1\subseteq W_2\subseteq \cdots \subseteq W_{n-1}\subseteq W_n=W$ be a fixed triangulation of $W$.  For any $A\in \mathcal{C}_E$, 
 we say that $(W_A, \iota, \mathcal{T}_A)$ is a trianguline deformation of $(W,\mathcal{T})$ over $A$ if $(W_A,\iota)$ is a deformation of $W$ 
 over $A$ and $\mathcal{T}_A:0\subseteq W_{1,A}\subseteq \cdots \subseteq W_{n-1,A}\subseteq W_{n,A}=W_A$ is a triangulation of $W_A$
( i.e. $W_{i,A}$ is a saturated sub $A$-$B$-pair of $W_A$ such that $W_{i,A}/W_{i-1,A}$ is a rank one $A$-$B$-pair for any $i$) such that 
$\iota(W_{i,A}\otimes_A E)=W_i$ for any $i$. Then we define the trianguline deformation functor of $(W,\mathcal{T})$, $D_{W,\mathcal{T}}:
\mathcal{C}_E\rightarrow (\mathrm{Sets})$ by $D_{W,\mathcal{T}}(A):=\{ $equivalent classes of trianguline deformations $(W_A,\iota,\mathcal{T}_A)$ 
of $(W,\mathcal{T})$ over $A$$\}$. We prove pro-representability and formally smoothness and dimension formula of this functor (Proposition $\ref{19}$).

In $\S$ 4, we define the notion of benign $E$-$B$-pairs, which forms a special good class of split trianguline $E$-$B$-pairs, and prove 
the main theorem of this article concerning to tangent spaces of the deformation rings of this class. 
In $\S$ 4.1, we define the notion of benign $E$-$B$-pairs. Let $W$ be a potentially crystalline $E$-$B$-pair of rank $n$ such that 
$W|_{G_L}$ is crystalline for a finite totally ramified abel extension $L$ of $K$ (we call such a representation a crystabelline representation).
We assume that $D_{\mathrm{cris}}^{L}(W):=(B_{\mathrm{cris}}\otimes_{B_e}W_e)^{G_L}=K_0\otimes_{\mathbb{Q_p}}Ee_1\oplus 
\cdots \oplus K_0\otimes_{\mathbb{Q}_p}Ee_n$ such that $K_0\otimes_{\mathbb{Q}_p}Ee_i$ are preserved by $(\varphi,\mathrm{Gal}(L/K))$-action 
and that $\varphi^f(e_i)=\alpha_ie_i$ for some $\alpha_i\in E^{\times}$, here $f:=[K_0:\mathbb{Q}_p]$ and $K_0$ is the maximal unramified extension of $\mathbb{Q}_p$ in $K$. 
We write $\{k_{1,\sigma}.k_{2,\sigma},\cdots,k_{n,\sigma}\}_{\sigma:K\hookrightarrow \bar{K}}$ the Hodge-Tate weight of $W$ such that 
$k_{1,\sigma}\geqq k_{2,\sigma}\geqq \cdots \geqq k_{n,\sigma}$ for any $\sigma:K\hookrightarrow \bar{K}$. 
Let $\mathfrak{S}_n$ be the $n$-th permutation group. Then, for any $\tau\in\mathfrak{S}_n$, we can define a filtration on $D^L_{\mathrm{cris}}(W)$ 
by $0\subseteq K_0\otimes_{\mathbb{Q}_p}Ee_{\tau(1)}\subseteq K_0\otimes_{\mathbb{Q}_p}Ee_{\tau(1)}\oplus K_0\otimes_{\mathbb{Q}_p}Ee_{\tau(2)}\subseteq 
\cdots \subseteq K_0\otimes_{\mathbb{Q}_p}Ee_{\tau(1)}\oplus \cdots \oplus K_0\otimes_{\mathbb{Q}_p}Ee_{\tau(n-1)}\subseteq D^L_{\mathrm{cris}}(W)$ by 
sub $E$-filtered $(\varphi, G_K)$-modules, where the filtration on $K_0\otimes_{\mathbb{Q}_p}Ee_{\tau(1)}\oplus \cdots \oplus K_0\otimes_{\mathbb{Q}_p}Ee_{\tau(i)}$ is 
the one induced from $L\otimes_{L_0}D^L_{\mathrm{cris}}(W)$. Then, by the $B$-pair version of ``weakly admissible = admissible" theorem, for any $\tau\in \mathfrak{S}_n$ 
we have a triangulation $\mathcal{T}_{\tau}:0\subseteq W_{\tau,1}\subseteq W_{\tau,2}\subseteq \cdots \subseteq W_{\tau,n}=W$ such that 
$W_{\tau,i}$ is potentially crystalline and $D^L_{\mathrm{cris}}(W_{\tau,i})\isom K_0\otimes_{\mathbb{Q}_p}Ee_{\tau(1)}\oplus \cdots \oplus K_0\otimes_{\mathbb{Q}_p}E
e_{\tau(i)}$

Under this condition, we define the notion of twisted benign $E$-$B$-pairs as follows.
\begin{defn}
Let $W$ be an $E$-$B$-pair of rank $n$ as above. Then we say that $W$ is twisted benign if $W$ satisfies the following;
\begin{itemize}
\item[(1)] For any $i\not= j$, we have $\alpha_i/\alpha_j\not= 1, p^f, p^{-f}$,
\item[(2)] For any $\sigma:K\hookrightarrow \bar{K}$, we have $k_{1,\sigma}>k_{2,\sigma}>\cdots >k_{n-1,\sigma}> k_{n,\sigma}$,
\item[(3)] For any $\tau\in \mathfrak{S}_n$ and $i$, Hodge-Tate weight of $W_{\tau,i}$ is $\{k_{1,\sigma},k_{2,\sigma},\cdots,k_{i,\sigma}\}_{\sigma :K\hookrightarrow \bar{K}}$.
\end{itemize}
\end{defn}

In $\S$ 4.2, we prove the main theorem of this article. Let $W$ be a twisted benign $E$-$B$-pair of rank $n$ as above. Then, for any $\tau\in \mathfrak{S}_n$, we can define 
the trianguline deformation functor $D_{W,\mathcal{T}_{\tau}}$. Let $R_W$ be the universal deformation ring of $D_W$ and 
$R_{W,\mathcal{T}_{\tau}}$ be the universal deformation ring of $D_{W,\mathcal{T}_{\tau}}$ for any $\tau\in \mathcal{S}_{\tau}$. 
Then $R_{W,\mathcal{T}_{\tau}}$ is a quotient ring of $R_W$. Let $t(R_{W})$ and $t(R_{W,\mathcal{T}_\tau})$  be the tangent spaces 
of $R_W$ and $R_{W,\mathcal{T}_{\tau}}$. Then, for any $\tau\in \mathfrak{S}_n$, $t(R_{W,\mathcal{T}_{\tau}})$ is a sub $E$-vector space of $t(R_W)$. 
The main theorem of this article is the following (Theorem $\ref{33}$), which is a generalization of Chenevier's theorem in [Ch09] in the 
case of $K=\mathbb{Q}_p$.
\begin{thm}
 Let $W$ be a twisted benign $E$-$B$-pair of rank $n$. Then we 
 have $$\sum_{\tau\in \mathfrak{S}_n}t(R_{W,\mathcal{T}_{\tau}})=t(R_W).$$
 \end{thm}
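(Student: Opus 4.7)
My plan is to recast the identity into an equality of Galois cohomology groups and verify it via a dimension count controlled by the benign hypotheses. By the tangent space descriptions of Section~3 (Corollary~\ref{13} and Proposition~\ref{19}), there are canonical identifications
$$t(R_W)\;\cong\;H^1(G_K,\mathrm{End}(W))\quad\text{and}\quad t(R_{W,\mathcal{T}_\tau})\;\cong\;H^1(G_K,\mathfrak{p}_{\mathcal{T}_\tau}),$$
where $\mathrm{End}(W)$ is the endomorphism $E$-$B$-pair and $\mathfrak{p}_{\mathcal{T}_\tau}\subseteq\mathrm{End}(W)$ is the sub $E$-$B$-pair of endomorphisms preserving the filtration $\mathcal{T}_\tau$. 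Under these identifications, the inclusion $t(R_{W,\mathcal{T}_\tau})\subseteq t(R_W)$ is induced by $\mathfrak{p}_{\mathcal{T}_\tau}\hookrightarrow\mathrm{End}(W)$, so the theorem is equivalent to the surjectivity of
$$\bigoplus_{\tau\in\mathfrak{S}_n}H^1(G_K,\mathfrak{p}_{\mathcal{T}_\tau})\;\longrightarrow\;H^1(G_K,\mathrm{End}(W)).$$
At the level of $B$-pairs one already has $\sum_\tau\mathfrak{p}_{\mathcal{T}_\tau}=\mathrm{End}(W)$: every rank one graded piece of the filtration on $\mathrm{End}(W)$ induced by some $\mathcal{T}_\sigma$ has the form $(W_{\sigma,i}/W_{\sigma,i-1})\otimes(W_{\sigma,j}/W_{\sigma,j-1})^{\vee}$, and by an appropriate choice of permutation each such piece belongs to $\mathfrak{p}_{\mathcal{T}_\tau}$ for some $\tau$.

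The next step is to compute both dimensions explicitly via the Euler--Poincar\'e formula for $E$-$B$-pairs, a $B$-pair analogue of Tate's local formula established in the paper. For each $\tau$, the filtration on $\mathrm{End}(W)$ induced by $\mathcal{T}_\tau$ has rank one graded pieces of the form $(W_{\tau,i}/W_{\tau,i-1})\otimes(W_{\tau,j}/W_{\tau,j-1})^{\vee}$, with Frobenius eigenvalue ratio $\alpha_{\tau(i)}/\alpha_{\tau(j)}$ and Hodge--Tate weights $\{k_{i,\sigma}-k_{j,\sigma}\}_\sigma$ (using condition~$(3)$). Hypotheses~$(1)$ and~$(2)$, combined with $B$-pair Tate local duality, force
$$H^0(G_K,(W_{\tau,i}/W_{\tau,i-1})\otimes(W_{\tau,j}/W_{\tau,j-1})^{\vee})=0$$
for $i\neq j$, and similarly $H^2=0$. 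Splicing the long exact sequences attached to the filtration then yields explicit formulas for $\dim_E H^1(G_K,\mathfrak{p}_{\mathcal{T}_\tau})$ and $\dim_E H^1(G_K,\mathrm{End}(W))$. A combinatorial inclusion--exclusion over $\mathfrak{S}_n$, whose $k$-fold intersections correspond to smaller parabolic sub $B$-pairs $\mathfrak{p}_{\mathcal{T}_{\tau_1}}\cap\cdots\cap\mathfrak{p}_{\mathcal{T}_{\tau_k}}$, should then show that the sum of dimensions (counted with the correct overlaps) equals $\dim_E H^1(G_K,\mathrm{End}(W))$, yielding the desired surjectivity.

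The main obstacle is precisely the passage from the elementary module identity $\sum_\tau\mathfrak{p}_{\mathcal{T}_\tau}=\mathrm{End}(W)$ to the corresponding identity in $H^1$. Since $H^1$ is only half exact, the image of the sum could a priori be a proper subspace of $H^1(G_K,\mathrm{End}(W))$, the obstruction coming from nonzero $H^0$ of the quotients $\mathrm{End}(W)/\!\sum_{\tau\in S}\mathfrak{p}_{\mathcal{T}_\tau}$ for subsets $S\subseteq\mathfrak{S}_n$. What makes the argument succeed is exactly the benign hypotheses: condition~$(1)$ on the Frobenius eigenvalue ratios and condition~$(2)$ on the distinctness of Hodge--Tate weights together eliminate the off-diagonal $H^0$ and $H^2$ contributions, while condition~$(3)$ ensures that the parabolic filtrations have the expected de~Rham structure so that the dimension count comes out cleanly. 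Adapting Chenevier's framework in [Ch09] to $B$-pair cohomology, via [Na09] and the results of Section~2, is what allows the argument to work uniformly for arbitrary $p$-adic fields $K$.
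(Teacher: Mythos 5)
Your reduction of the theorem to the surjectivity of $\bigoplus_{\tau}H^1(G_K,\mathfrak{p}_{\mathcal{T}_\tau})\rightarrow H^1(G_K,\mathrm{ad}(W))$ is correct, and you rightly identify the real difficulty: passing from the module-level identity $\sum_\tau\mathfrak{p}_{\mathcal{T}_\tau}=\mathrm{ad}(W)$ to the corresponding identity on $H^1$. But the mechanism you propose for closing that gap --- an inclusion--exclusion dimension count over $\mathfrak{S}_n$ whose $k$-fold terms are $H^1$ of the intersections $\mathfrak{p}_{\mathcal{T}_{\tau_1}}\cap\cdots\cap\mathfrak{p}_{\mathcal{T}_{\tau_k}}$ --- does not work. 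Two separate problems: (i) for three or more subspaces of a vector space, the dimension of the sum is \emph{not} given by inclusion--exclusion on the dimensions of intersections, so even granting all the individual dimensions the count would not close; (ii) the intersection of the images of $H^1(G_K,\mathfrak{p}_{\mathcal{T}_{\tau_1}})$ and $H^1(G_K,\mathfrak{p}_{\mathcal{T}_{\tau_2}})$ inside $H^1(G_K,\mathrm{ad}(W))$ contains, but need not equal, the image of $H^1(G_K,\mathfrak{p}_{\mathcal{T}_{\tau_1}}\cap\mathfrak{p}_{\mathcal{T}_{\tau_2}})$; controlling the discrepancy is exactly the content of the theorem, not an input to it. A symptom that something essential is missing: the dimensions $\dim_E H^1(G_K,\mathfrak{p}_{\mathcal{T}_\tau})=\tfrac{n(n+1)}{2}[K:\mathbb{Q}_p]+1$ and $\dim_E H^1(G_K,\mathrm{ad}(W))=n^2[K:\mathbb{Q}_p]+1$ are forced by condition (1) and $\mathrm{End}_{G_K}(W)=E$ alone, whereas the theorem genuinely fails for critical refinements, i.e.\ when condition (3) fails. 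So condition (3) must enter through the \emph{overlaps} of the images, and your proposal has no mechanism by which it does.

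The paper's actual route is quite different. It argues by induction on the rank $n$ and introduces two auxiliary subfunctors: the potentially crystalline deformation functor $D^{\mathrm{cris}}_W$ and, for each $\tau$, the functor $D^{\mathrm{cris}}_{W,\tau}$ of deformations carrying a crystabelline rank one sub-$B$-pair lifting $W'_\tau$. The key step (Lemma \ref{32}) is the two-subspace identity $t(D_{W,\mathcal{T}_\tau})+t(D^{\mathrm{cris}}_{W,\tau})=t(D_W)$, which \emph{is} amenable to a dimension count because for two subspaces one only needs $\dim(V_1+V_2)=\dim V_1+\dim V_2-\dim(V_1\cap V_2)$ together with the identification $D_{W,\mathcal{T}_\tau}\cap D^{\mathrm{cris}}_{W,\tau}=D^{\mathrm{cris}}_W$. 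Establishing that intersection is where all the hard work and condition (3) live: it uses Hodge--Tate weight comparisons, Sen's theorem on potentially unramified deformations, and saturation arguments (Lemmas \ref{28}--\ref{31}), not merely the vanishing of $H^0$ and $H^2$ of off-diagonal graded pieces. The remaining surjectivity onto $H^1(G_K,\mathrm{ad}_\tau(W))$ is then devolved, via the four short exact sequences in the proof of Theorem \ref{33} and the vanishing of the relevant $H^2$'s, to the same statement for $W/W'_\tau$, which is twisted benign of rank $n-1$ --- the induction hypothesis. If you want to salvage your approach, you would need to replace the global inclusion--exclusion by this kind of two-term Mayer--Vietoris step combined with an induction on rank, and you would need an independent argument (playing the role of Lemma \ref{32}) that exploits non-criticality to control one overlap at a time.
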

 
 This theorem is a crucial local results for applications to some Zariski density theorems of 
 local or global $p$-adic representations. In fact, by using 
 this theorem in the case of $K=\mathbb{Q}_p$, Chenevier, in  [Ch09], proved Zariski density of unitary automorphic Galois representations 
 in deformation spaces of three dimensional self dual $p$-adic representations of $G_{F}$ for any CM field $F$
 in which $p$ splits completely. In the next article, by using this main theorem, the author will prove Zariski density of crystalline 
 representations in deformation spaces of two dimensional $p$-adic representations for any $p$-adic fields, which is a generalization 
 of Colmez's and Kisin's theorem ([Co08], [Ki08b]) in the case of $K=\mathbb{Q}_p$.
 The author of this article hopes that,  by using this theorem, we can prove many kinds of Zariski density theorems for 
 general $p$-adic field cases or general global (CM or totally real) field cases.

\subsection*{Notation.}
Let $p$ be a prime number. $K$ be a finite extension of $\mathbb{Q}_p$, $\bar{K}$  a 
fixed algebraic closure of $K$, $K_0$ the maximal unramified extension of $\mathbb{Q}_p$ in $K$, 
$K^{\mathrm{nor}}$ the Galois closure of $K$ in $\bar{K}$. Let $G_K:=\mathrm{Gal}(\bar{K}/K)$ be the 
absolute Galois group of $K$ equipped with profinite topology. $\mathcal{O}_K$ is the integer ring of $K$, $\pi_K\in\mathcal{O}_K$ is 
a uniformizer of $K$, $k:=\mathcal{O}_K/\pi_K\mathcal{O}_K$ is the residue field 
of $K$, $q=p^f:=\sharp k$ is the order of $k$, $\chi:G_K\rightarrow \mathbb{Z}_p^{\times}$ is the p-adic cyclotomic homomorphism 
(i.e. $g(\zeta_{p^n})=\zeta_{p^n}^{\chi(g)}$ for any $p^n$-th roots of unity and for any $g\in G_K$). Let $\mathbb{C}_p:=\hat{\bar{K}}$ 
be the $p$-adic completion of $\bar{K}$, which is an algebraically closed $p$-adically complete field, and $\mathcal{O}_{\mathbb{C}_p}$ is its integer ring. We denote $v_p$ the normalized valuation on $\mathbb{C}_p^{\times}$ such that 
$v_p(p)=1$. Let $E$ be a finite extension of $\mathbb{Q}_p$ such that 
$K^{\mathrm{nor}}\subseteq E$. In this paper, we write $E$ as a coefficient of representations. 
We put $\mathcal{P}:=\{\sigma:K\hookrightarrow \bar{K}\}=\{\sigma:K\hookrightarrow E\}$ 
Let $\chi_{\mathrm{LT}}:G_K\rightarrow \mathcal{O}_K^{\times}\hookrightarrow \mathcal{O}_E^{\times}$ be 
the Lubin-Tate homomorphism associated with the fixed uniformizer $\pi_K$. Let $\mathrm{rec}_K:K^{\times}
\rightarrow G_K^{\mathrm{ab}}$ be the reciprocity map of local class field theory such that 
$\mathrm{rec}_K(\pi_K)$ is a lifting of the inverse of $q$-th power Frobenius on $k$, then 
$\chi_{\mathrm{LT}}\circ \mathrm{rec}_K:K^{\times}\rightarrow \mathcal{O}_K^{\times}$ satisfies 
$\chi_{\mathrm{LT}}\circ\mathrm{rec}_K(\pi_K)=1$ and $\chi_{\mathrm{LT}}\circ\mathrm{rec}_K|_{\mathcal{O}_K^{\times}}
=\mathrm{id}_{\mathcal{O}_K^{\times}}$.
\subsection*{Acknowledgement.}
The author would like to thank Kenichi Bannai for reading the first version of this article and for helpful comments. 
The author is supported by JSPS Core-to-Core program 18005 whose representative is Makoto Matsumoto.
 This work is also supported in part by KAKENHI (21674001).

\section{$B$-pairs.}

\subsection{$B$-pairs.}
In this subsection, first we recall the definition of $E$-$B$-pairs ([Be08],  [Na09]) and then recall fundamental 
properties of them established in [Na09]. 
First, we recall some $p$-adic period rings ([Fo94]) which we need for defining $B$-pairs.
Let $\widetilde{\mathbb{E}}^+:=\varprojlim_n\mathcal{O}_{\mathbb{C}_p}\isom
\varprojlim_n\mathcal{O}_{\mathbb{C}_p}/p\mathcal{O}_{\mathbb{C}_p}$, where the limits are taken with respect to $p$-th power maps. It is known that $\widetilde{\mathbb{E}}^+$ is a complete valuation ring of homomorphismistic $p$ whose valuation
is defined by $\mathrm{val}(x):=v_{p}(x^{(0)})$  (here $x=(x^{(n)})\in \varprojlim_n\mathcal{O}_{\mathbb{C}_p}$).
We fix a system of $p^n$-th roots of unity $\{\varepsilon^{(n)}\}_{n\ge 0}$ 
such that $\varepsilon^{(0)}=1$, $(\varepsilon^{(n+1) })^p=\varepsilon^{(n)}$, 
$\varepsilon^{(1)}\not= 1$. Then $\varepsilon:=(\varepsilon^{(n)})$ is an element of $\widetilde{\mathbb{E}}^+$ such that $\mathrm{val}(\varepsilon -1)=p/(p-1)$. $G_K$ acts on this ring in natural way. We put  $\widetilde{\mathbb{A}}^+:=W(\widetilde{\mathbb{E}}^+)$, where, for a perfect ring $R$,
$W(R)$ is the Witt ring of $R$. We put $\widetilde{\mathbb{B}}^+:=\widetilde{\mathbb{A}}^+[\frac{1}{p}]$. This ring also has natural $G_K$-action and Frobenius 
action $\varphi$. Then we have a $G_K$-equivariant surjection $\theta:\widetilde{\mathbb{A}}^+
\rightarrow \mathcal{O}_{\mathbb{C}_p}:\sum_{k=0}^{\infty}p^k[x_k]\mapsto \sum_{k=0}^{\infty} p^k x_k^{(0)}$, where $[\,\, ]:\widetilde{\mathbb{E}}^+\rightarrow \widetilde{\mathbb{A}}^+$ is the
Teichm$\ddot{\mathrm{u}}$ller homomorphism.
By inverting $p$, we get a surjection $\widetilde{\mathbb{B}}^+\rightarrow \mathbb{C}_p$.
We put $B_{\mathrm{dR}}^+:=\varprojlim_n\widetilde{\mathbb{B}}^+/(\mathrm{Ker}(\theta))^n$, which is
a complete discrete valuation ring with residue field $\mathbb{C}_p$ and 
is equipped with the projective limit topology of the $p$-adic topology 
on $\widetilde{\mathbb{B}}^+/(\mathrm{Ker}(\theta))^n$ whose lattice is the image of $\tilde{\mathbb{A}}^+\rightarrow 
\tilde{\mathbb{B}}^+/(\mathrm{Ker}(\theta))^n$. 
Let $A_{\mathrm{cris}}$ be the $p$-adic completion of $\widetilde{\mathbb{A}}^+[\frac{([\tilde{p}]-p)^n]}{n!}]_{n\geqq 1}$, where $\tilde{p}:=(p^{(n)})$ is an element in $\widetilde{\mathbb{E}}^+$ such that $p^{(0)}=p, (p^{(n+1) })^p=p^{(n)}$. We put $B_{\mathrm{cris}}^+:=A_{\mathrm{cris}}[\frac{1}{p}]$ equipped with $p$-adic topology whose lattice is $A_{\mathrm{cris}}$. $A_{\mathrm{cris}}$ and  
$B^+_{\mathrm{cris}}$ have $G_K$-actions and Frobenius actions $\varphi$. We have a natural 
$G_K$-equivariant embedding $K\otimes_{K_0}B^+_{\mathrm{cris}}\hookrightarrow B^+_{\mathrm{dR}}$.
If we put $t:=\mathrm{log}([\varepsilon])=\sum_{n=1}^{\infty}(-1)^{n-1}\frac{([\varepsilon]-1)^n}{n}$, then we can see that $t\in A_{\mathrm{cris}}$, 
$\varphi(t)=pt, g(t)=\chi(g)t$ for any $g\in G_K$ and $\mathrm{Ker}(\theta)=tB_{\mathrm{dR}}^+
\subset B_{\mathrm{dR}}^+$ is the maximal ideal of $B^+_{\mathrm{dR}}$. If we put $B_{\mathrm{cris}}:=B_{\mathrm{cris}}^+
[\frac{1}{t}], B_{\mathrm{dR}}:=B_{\mathrm{dR}}^+[\frac{1}{t}]$, we have a natural embedding 
$K\otimes_{K_0}B_{\mathrm{cris}}\hookrightarrow B_{\mathrm{dR}}$. 
We put $B_e:=B_{\mathrm{cris}}^{\varphi =1}$ on which we equipped the inductive limit topology of 
$B_e=\cup_{n}(\frac{1}{t^n}B^{+}_{\mathrm{cris}})^{\varphi=1}$, where each $(\frac{1}{t^n}B^{+}_{\mathrm{cris}})^{\varphi=1}
=\frac{1}{t^n}B^{+ \varphi=p^n}_{\mathrm{cris}}$ is 
equipped with $p$-adic topology. We put $\mathrm{Fil}^i B_{\mathrm{dR}}:=t^i B_{\mathrm{dR}}^+$ for any $i\in\mathbb{Z}$.

In this paper, we fix a coefficient field of $p$-adic representations or $B$-pairs of $G_K$.
So we start in this subsection by recalling an $E$-coefficient version of $p$-adic representations and $B$-pairs.
\begin{defn}\label{a}
An $E$-representation of $G_K$ is a finite dimensional $E$-vector space $V$ with a continuous $E$-linear action of $G_K$. We call $E$-representation for simplicity when there is no risk 
of confusion about $K$.
\end{defn}
\begin{defn} \label{b}
A pair $W:=(W_e,W^+_{\mathrm{dR}})$ is an $E$-$B$-pair if 
\begin{itemize}
\item[(1)] $W_e$ is a finite $B_e\otimes_{\mathbb{Q}_p}E$-module which is free over $B_e$ with 
a continuous semi-linear $G_K$-action.
\item[(2)] $W^+_{\mathrm{dR}}$ is a $G_K$-stable finite sub $B^+_{\mathrm{dR}}\otimes_{\mathbb{Q}_p}E$-module 
of $B_{\mathrm{dR}}\otimes_{B_e}W_e$ which generates $B_{\mathrm{dR}}\otimes_{B_e}W_e$ as a $B_{\mathrm{dR}}$-module.
\end{itemize}
\end{defn}
Then, by Lemma 1.7, 1.8 of [Na09], $W_e$ is a free $B_e\otimes_{\mathbb{Q}_p}E$-module and 
$W^+_{\mathrm{dR}}$ is a free $B^+_{\mathrm{dR}}\otimes_{\mathbb{Q}_p}E$-module. We define the rank of $W$ by $\mathrm{rank}(W):=\mathrm{rank}_{B_e\otimes_{\mathbb{Q}_p}E}(W_e)$.
For $E$-$B$-pairs $W_1:=(W_{1,e},W^+_{1,\mathrm{dR}})$ and $W_2:=(W_{2,e}, W^+_{2,\mathrm{dR}})$, 
we define the tensor product of $W_1$ and $W_2$ by  $W_1\otimes W_2:=(W_{1,e}\otimes_{B_e\otimes_{\mathbb{Q}_p}E}W_{2,e}, W^+_{1,\mathrm{dR}}\otimes_{B^+_{\mathrm{dR}}\otimes_{\mathbb{Q}_p}E}W^+_{2,\mathrm{dR}})$ and define the dual of $W_1$ 
by $W_1^{\vee}:=(\mathrm{Hom}_{B_e\otimes_{\mathbb{Q}_p}E}(W_{1,e}, B_{e}\otimes_{\mathbb{Q}_p}E), \{f\in \mathrm{Hom}_{B_{\mathrm{dR}}\otimes_{\mathbb{Q}_p}E}(B_{\mathrm{dR}}
\otimes_{B_e}W_{1,e}, B_{\mathrm{dR}}\otimes_{\mathbb{Q}_p}E)| f(W^+_{1,\mathrm{dR}})\subseteq B^+_{\mathrm{dR}}\otimes_{\mathbb{Q}_p}E\})$  (remark: there is a mistake in the 
Definition 1.9 of [Na09]).
The category of $E$-$B$-pairs is not an abelian category, i.e. an inclusion $W_1\hookrightarrow W_2$ in general 
does not have a quotient in the category of $E$-$B$-pairs. But we can always take the saturation $W^{\mathrm{sat}}_{1}:=(W^{\mathrm{sat}}_{1.e}, W^{+,\mathrm{sat}}_{1,\mathrm{dR}})$ such that $W^{\mathrm{sat}}_1$ sits in 
$W_1\hookrightarrow W^{\mathrm{sat}}_1\hookrightarrow W_2$ and $W_{1,e}=W^{\mathrm{sat}}_{1,e}$ and $W_2/W^{\mathrm{sat}}_{1}$ is an $E$-$B$-pair (Lemma 1.14 of [Na09]). We say that an inclusion $W_1\hookrightarrow W_2$ is saturated if $W_2/W_1$ is an $E$-$B$-pair.

Next, we recall $p$-adic Hodge theory for $B$-pairs. Let $W=(W_e,W^+_{\mathrm{dR}})$ be an $E$-$B$-pair. 
We define $D_{\mathrm{cris}}(W):=(B_{\mathrm{cris}}\otimes_{B_e}W_e)^{G_K}$, $D^L_{\mathrm{cris}}(W):=
(B_{\mathrm{cris}}\otimes_{B_e}W_e)^{G_L}$ for any finite extension $L$ of $K$, $D_{\mathrm{dR}}(W):=(B_{\mathrm{dR}}\otimes_{B_e}
W_e)^{G_K}$, $D_{\mathrm{HT}}(W):=(B_{\mathrm{HT}}\otimes_{\mathbb{C}_p}(W^+_{\mathrm{dR}}/tW^+_{\mathrm{dR}}))^{G_K}$, here 
$B_{\mathrm{HT}}:=\mathbb{C}_p[T, T^{-1}]$ where $G_K$ acts by $g(aT^i):=\chi(g)^ig(a)T^i$ for any $g\in G_K, a\in \mathbb{C}_p, i\in \mathbb{Z}$.
\begin{defn}\label{c}
We say that $W$ is crystalline (resp. de Rham, resp. Hodge-Tate) if 
$\mathrm{dim}_{K_{\ast}}D_{\mathrm{\ast}}(W)=[E:\mathbb{Q}_p]\mathrm{rank}(W)$ for $\ast= \mathrm{cris}$ (resp. $\ast=\mathrm{dR}$, resp. 
$\ast=\mathrm{HT}$), where $K_{\ast}=K_0$ when $\ast=\mathrm{cris}$ and $K_{\ast}=K$ when $\ast=\mathrm{dR},\mathrm{HT}$.
We say that $W$ is potentially crystalline if $\mathrm{dim}_{L_0}(D^L_{\mathrm{cris}}(W))=[E:\mathbb{Q}_p]\mathrm{rank}(W)$ for 
a finite extension $L$ of $K$, where $L_0$ is the maximal unramified extension of $\mathbb{Q}_p$ in $L$.
\end{defn}

\begin{defn}\label{d}
Let $L$ be a finite Galois extension of $K$ and $G_{L/K}:=\mathrm{Gal}(L/K)$.
We say that $D$ is an $E$-filtered ($\varphi,G_{L/K}$)-module over $K$ if 
\begin{itemize}
\item[(1)] $D$ is a finite free $L_0\otimes_{\mathbb{Q}_p} E$-module with a $\varphi$-semi-linear 
action $\varphi_D$ and a semi-linear action of $G_{L/K}$ such that 
$\varphi_D:D\isom D$ is an isomorphism and that $\varphi_D$ and  $G_{L/K}$-action commute, where ($\varphi$-)semi-linear means that 
$\varphi_D(a\otimes b\cdot x)=\varphi(a)\otimes b\cdot \varphi_D(x)$, $g(a\otimes b\cdot x)=g(a)\otimes b\cdot g(x)$ for 
any $a\in L_0, b\in E, x\in D,  g\in G_{L/K}$.
\item[(2)] $D_L:=L\otimes_{L_0}D$ has a separated and exhausted decreasing filtration $\mathrm{Fil}^iD_L$ by 
sub $L\otimes_{\mathbb{Q}_p}E$-modules such that $G_{L/K}$-action on $D_L$ preserves this filtration.
\end{itemize}
\end{defn}

Let $W$ be a potentially crystalline $E$-$B$-pair such that $W|_{G_L}$ is crystalline 
for a finite Galois extension $L$ of $K$. Then we define an $E$-filtered ($\varphi, G_{L/K}$)-module 
structure on $D^L_{\mathrm{cris}}(W)$ as follows. First, $D^L_{\mathrm{cris}}(W)$ has a Frobenius action 
induced from that on $B_{\mathrm{cris}}$ and has a $G_{L/K}$-action induced from those on $B_{\mathrm{cris}}$ and 
$W_e$. 
We define a filtration on $L\otimes_{L_0}D^L_{\mathrm{cris}}(W)=L\otimes_{K}D_{\mathrm{dR}}(W)$ by 
$\mathrm{Fil}^i(L\otimes_{L_0}D^{L}_{\mathrm{cris}}(W)):=(L\otimes_{K}D_{\mathrm{dR}}(W))\cap t^iW^+_{\mathrm{dR}}$.

Let $D:=L_0e$ be a rank one $\mathbb{Q}_p$-filtered ($\varphi, G_{L/K}$)-module with a base $e$. then 
we define $t_N(D):=v_p(\alpha)$ where $\varphi_D(e)=\alpha\cdot e$ and define $t_H(D):=i$ such that 
$\mathrm{Fil}^iD_L/\mathrm{Fil}^{i+1}D_L\not= 0$. For general $D$ of rank $d$, we define $t_N(D):=t_N(\wedge^dD)$, 
$t_H(D):=t_H(\wedge^dD)$. Then we say that $D$ is weakly admissible if $t_N(D)=t_H(D)$ and $t_N(D')\geqq t_H(D')$ for any 
sub $\mathbb{Q}_p$-filtered ($\varphi,G_{L/K}$)-module $D'$ of $D$.
\begin{thm}\label{e}
Let $L$ be a finite Galois extension of $K$. Then we have the following results.
\begin{itemize}
\item[(1)] The functor $W\mapsto D^L_{\mathrm{cris}}(W)$ gives an equivalence of categories between 
the category of potentially crystalline $E$-$B$-pairs which are crystalline if restricted to $G_L$ and the 
category of $E$-filtered $(\varphi,G_{L/K})$-modules over $K$.
\item[(2)] By restricting the above functor to $E$-representations, the functor $V\mapsto D^{L}_{\mathrm{cris}}(V)$ 
gives an equivalence of categories between the category of potentially crystalline $E$-representations which are 
crystalline if restricted to $G_L$  and the category of weakly admissible $E$-filtered $(\varphi,G_{L/K})$-modules over $K$.
\end{itemize}
\end{thm}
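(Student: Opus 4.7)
The plan is to reduce the theorem to two inputs: (a) the crystalline case of the equivalence over $G_L$, which is an $E$-coefficient version of Berger's theorem in [Be08] established in [Na09], and (b) Galois descent along $G_{L/K}$. The key observation is that the data of a potentially crystalline $E$-$B$-pair of $G_K$ that becomes crystalline on $G_L$ is equivalent to giving a crystalline $E$-$B$-pair of $G_L$ together with a compatible semi-linear $G_{L/K}$-action (using that $G_L \subset G_K$ is normal), and analogously an $E$-filtered $(\varphi, G_{L/K})$-module over $K$ is the same data as an $E$-filtered $\varphi$-module over $L$ equipped with a semi-linear $G_{L/K}$-action. Hence (1) will follow once we establish the crystalline case for $G_L$ as an equivalence compatible with such semi-linear automorphisms.

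For the crystalline case I would construct an explicit quasi-inverse. Given an $E$-filtered $\varphi$-module $D$ over $L$, set
\[
W_e(D) := (B_{\mathrm{cris}} \otimes_{L_0} D)^{\varphi = 1}, \qquad W^+_{\mathrm{dR}}(D) := \mathrm{Fil}^0\bigl(B_{\mathrm{dR}} \otimes_L D_L\bigr),
\]
with diagonal $G_L$-action. Using Fontaine's fundamental exact sequence $0 \to \mathbb{Q}_p \to B_{\mathrm{cris}}^{\varphi=1} \to B_{\mathrm{dR}}/B^+_{\mathrm{dR}} \to 0$, tensored with $D$ and followed by $G_L$-invariants, one checks that $W(D)$ is a crystalline $E$-$B$-pair of $G_L$ with $D_{\mathrm{cris}}(W(D)) \cong D$. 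Conversely, for a crystalline $E$-$B$-pair $W$ of $G_L$, the comparison map $B_{\mathrm{cris}} \otimes_{L_0} D_{\mathrm{cris}}(W) \to B_{\mathrm{cris}} \otimes_{B_e} W_e$ is an isomorphism, giving $W \cong W(D_{\mathrm{cris}}(W))$. Since $B_{\mathrm{cris}}$ and $B_{\mathrm{dR}}$ carry canonical $G_K$-actions, both constructions transport $G_{L/K}$-semi-linear isomorphisms functorially, which yields (1).

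For (2), an $E$-representation $V$ corresponds under (1) to the $E$-$B$-pair $W(V) = (B_e \otimes_{\mathbb{Q}_p} V, B^+_{\mathrm{dR}} \otimes_{\mathbb{Q}_p} V)$. I would invoke the Colmez–Fontaine theorem (``weakly admissible equals admissible'') applied to the underlying $\mathbb{Q}_p$-filtered $(\varphi, G_{L/K})$-module of $D^L_{\mathrm{cris}}(W)$: this shows that a crystalline $E$-$B$-pair $W$ lies in the essential image of $V \mapsto W(V)$ iff $D^L_{\mathrm{cris}}(W)$ is weakly admissible. The main obstacle I expect is verifying that weak admissibility as an $E$-filtered $(\varphi, G_{L/K})$-module coincides with weak admissibility as a $\mathbb{Q}_p$-filtered one. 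This reduces, on the one hand, to the fact that any sub-$E$-filtered module is in particular a sub-$\mathbb{Q}_p$-filtered module (so the $\mathbb{Q}_p$-weak-admissibility inequality a priori imposes more constraints), and on the other hand, to checking that for a sub-$\mathbb{Q}_p$-filtered module $D'$ one can replace it by the $E$-saturation $D' \cdot E$ without decreasing $t_N - t_H$; this uses that Hodge and Newton numbers scale linearly with $[E:\mathbb{Q}_p]$ under faithfully flat base change. Once this scalar-extension compatibility is settled, (2) follows from (1) and the classical Colmez–Fontaine theorem.
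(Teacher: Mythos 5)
The paper does not actually prove this theorem; it simply cites Berger's [Be08, Prop.\ 2.3.4, Thm.\ 2.3.5] and the author's earlier [Na09, Thm.\ 1.18]. Your plan reproduces, at the level of a sketch, what those references do: build the quasi-inverse $D \mapsto \bigl((B_{\mathrm{cris}}\otimes_{L_0}D)^{\varphi=1}, \mathrm{Fil}^0(B_{\mathrm{dR}}\otimes_L D_L)\bigr)$, use Fontaine's fundamental exact sequence to compare, descend from $G_L$ to $G_K$ using the canonical $G_K$-action on the period rings, and then invoke Colmez--Fontaine for the representation/weakly-admissible refinement. So in spirit this matches the cited argument, and the descent observation is correctly formulated.

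Two points deserve more care. First, the step ``one checks that $W(D)$ is a crystalline $E$-$B$-pair'' hides the essential technical input: that $(B_{\mathrm{cris}}\otimes_{L_0}D)^{\varphi=1}$ is a free $B_e\otimes_{\mathbb{Q}_p}E$-module of the expected rank and that $D_{\mathrm{cris}}$ of the resulting pair recovers $D$. This relies on the Dieudonn\'e--Manin-type classification of $\varphi$-modules over $\widetilde{\mathbb{B}}^+$ (Kedlaya's slope theory), which is exactly the substance of Berger's theorem; it should be named rather than absorbed into ``one checks.'' Second, your worry about weak admissibility over $E$ versus over $\mathbb{Q}_p$ is a non-issue given this paper's conventions: the definition preceding the theorem explicitly tests $t_N\ge t_H$ over \emph{all} sub-$\mathbb{Q}_p$-filtered modules, so ``weakly admissible $E$-filtered module'' just means an $E$-filtered module whose underlying $\mathbb{Q}_p$-object is weakly admissible, and part (2) then follows from Colmez--Fontaine with no extra argument. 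If you did want the equivalence with $E$-submodule testing, the $D'\mapsto D'\cdot E$ argument you sketch does not work as stated, because $t_N$ and $t_H$ do not scale linearly in $[E:\mathbb{Q}_p]$ under that replacement (the rank and the exterior powers change in an uncontrolled way); the correct route is to observe that the Harder--Narasimhan filtration of the underlying $\mathbb{Q}_p$-filtered $\varphi$-module is canonical, hence stable under the commuting $E$-action, so any destabilizing subobject can be taken $E$-stable.
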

\begin{proof}
See [Be08, Proposition 2.3.4 and Theorem 2.3.5] or [Na09 Theorem 1.18].
\end{proof}

Next, we define trianguline $E$-$B$-pairs, whose deformation theory we study in detail in this article. 
\begin{defn}\label{f}
Let $W$ be an $E$-$B$-pair of rank $n$. Then we say that $W$ is split trianguline if 
there exists a filtration $\mathcal{T}:0\subseteq W_1\subseteq W_2\subseteq \cdots \subseteq W_n=W$ by 
sub $E$-$B$-pairs such that $W_{i}$ is saturated in $W_{i+1}$ and $W_{i+1}/W_{i}$ is a rank one 
$E$-$B$-pair for any $i$. We say that $W$ is trianguline if $W\otimes_{E}E'$, base change of $W$  to $E'$, 
is a split trianguline $E'$-$B$-pair for a finite extension $E'$ of $E$. 
\end{defn} 

By this definition, for studying split trianguline $E$-$B$-pairs, it is important to classify rank one $E$-$B$-pairs 
and calculate extension classes by rank one objects, which were studied in the previous article by the author ([Na09]). 
Now, we recall some  results concerning these.

\begin{thm}\label{g}
There exists canonical one to one correspondence $\delta\mapsto W(\delta)$ between the set of 
continuous homomorphisms $\delta:K^{\times}\rightarrow E^{\times}$ and  the set of isomorphism classes
of rank one $E$-$B$-pairs. 
\end{thm}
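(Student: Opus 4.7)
The strategy is to build $\delta \mapsto W(\delta)$ explicitly by decomposing $K^\times \cong \pi_K^{\mathbb{Z}} \times \mathcal{O}_K^\times$, producing a rank one $E$-$B$-pair from each factor, and then to invert this map by extracting canonical $p$-adic Hodge theoretic invariants from an arbitrary rank one $E$-$B$-pair.

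For the construction, write any continuous $\delta : K^\times \to E^\times$ uniquely as $\delta = \delta_\pi \cdot \delta_{\mathcal{O}}$ with $\delta_\pi|_{\mathcal{O}_K^\times} = 1$ and $\delta_{\mathcal{O}}(\pi_K) = 1$. To $\delta_\pi$ I attach the crystalline rank one $E$-$B$-pair $W(\delta_\pi)$ corresponding, via Theorem \ref{e} with $L = K$, to the $E$-filtered $\varphi$-module $K_0 \otimes_{\mathbb{Q}_p} E \cdot e$ with $\varphi^f(e) = \delta(\pi_K) \cdot e$ and trivial filtration (so that $\mathrm{Fil}^0$ is everything and $\mathrm{Fil}^1 = 0$). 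To $\delta_{\mathcal{O}}$ I attach $W(E(\eta))$, where $\eta := \delta_{\mathcal{O}} \circ \chi_{\mathrm{LT}} : G_K \to E^\times$ and $E(\eta)$ is the one-dimensional $E$-representation with character $\eta$; the identity $\chi_{\mathrm{LT}} \circ \mathrm{rec}_K|_{\mathcal{O}_K^\times} = \mathrm{id}$ makes this exactly the local class field theory reformulation. Finally, set $W(\delta) := W(\delta_\pi) \otimes W(\delta_{\mathcal{O}})$. A direct check shows the resulting isomorphism class is independent of the choice of $\pi_K$.

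For injectivity, one reads $\delta$ back off $W(\delta)$: the generalized Hodge--Tate--Sen weights (over each embedding $\sigma \in \mathcal{P}$) recover the algebraic component of $\delta_{\mathcal{O}}$ via $\chi_{\mathrm{LT}}$; the $D_{\mathrm{cris}}$-Frobenius after an appropriate unramified twist recovers $\delta(\pi_K)$; and the residual $G_K$-character is recovered via $\mathrm{rec}_K$. For surjectivity, given any rank one $E$-$B$-pair $W$, I first twist by a $W(\delta')$ built from the Lubin--Tate characters $\sigma \circ \chi_{\mathrm{LT}}$ ($\sigma \in \mathcal{P}$) to cancel the Hodge--Tate weights of $W$; next twist by an unramified character to trivialize the $D_{\mathrm{cris}}$-Frobenius eigenvalue; and then show that the remaining $E$-$B$-pair, which has $W^+_{\mathrm{dR}} = B^+_{\mathrm{dR}} \otimes_{B_e} W_e$ and trivial crystalline data, must be of the form $W(V)$ for a continuous one-dimensional $E$-representation $V$ of $G_K$. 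Applying $\mathrm{rec}_K$ then presents $V$ as a character of $K^\times$, completing the inversion.

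The main obstacle is the final step of the surjectivity: showing that every rank one $E$-$B$-pair with vanishing Hodge--Tate weights and trivial $D_{\mathrm{cris}}$-Frobenius is automatically étale, i.e.\ of the form $W(V)$. Concretely this is a cohomological statement: one must analyze $H^1_{\mathrm{cont}}(G_K, (B_e \otimes_{\mathbb{Q}_p} E)^\times)$ together with the compatibility condition imposed by the $B^+_{\mathrm{dR}}$-lattice $W^+_{\mathrm{dR}}$, and show that the natural map from $H^1_{\mathrm{cont}}(G_K, E^\times)$ surjects onto the relevant quotient. For $K = \mathbb{Q}_p$ this is implicit in Colmez's $(\varphi,\Gamma)$-module classification; for general $K$ the computation has been carried out in [Na09, §1] using $B$-pair techniques directly. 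With this in hand, canonicity of $\delta \mapsto W(\delta)$ is manifest because every normalization (Lubin--Tate, reciprocity, the uniformizer $\pi_K$) is built into the definition explicitly.
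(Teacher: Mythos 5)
Your construction of $W(\delta)$ --- splitting $\delta$ into a unitary part $\delta_{\mathcal O}$ handled via Lubin--Tate/class field theory and an unramified part $\delta_\pi$ realized by a rank one filtered $\varphi$-module with trivial filtration and $\varphi^f=\delta(\pi_K)$, then tensoring --- is exactly the construction of [Na09, $\S$1.4] that the paper recalls in its Artinian-coefficient form in $\S$2.2, and the paper's own ``proof'' of this theorem is nothing more than the citation to [Na09, Theorem 1.45]. Your bijectivity argument likewise defers the crux (the computation of $\mathrm{H}^1(G_K,(B_e\otimes_{\mathbb{Q}_p}E)^{\times})$ together with the lattice data) to [Na09, $\S$1], so the approach is essentially the same; the one imprecision is that for an arbitrary rank one $E$-$B$-pair the invariant to normalize away is the slope (degree), not a ``$D_{\mathrm{cris}}$-Frobenius eigenvalue'' (which need not exist when $D_{\mathrm{cris}}(W)=0$), and this is precisely how the slope-theoretic argument in the proof of Proposition 2.12 proceeds.
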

\begin{proof}
See  Proposition 3.1 of [Co08] for $K=\mathbb{Q}_p$ and Theorem 1.45 of [Na09] for general $K$.
For construction of $W(\delta)$, see $\S$1.4 of [Na09].
\end{proof}

This correspondence is compatible with local class field theory, i.e. for a unitary homomorphism 
$\delta:K^{\times}\rightarrow \mathcal{O}_E^{\times}$ if we take  the $\tilde{\delta}:G^{\mathrm{ab}}_K\rightarrow \mathcal{O}_E^{\times}$ 
satisfying $\tilde{\delta}\circ \mathrm{rec}_K=\delta$, then we have $W(\delta)\isom W(E(\tilde{\delta}))$. 
This correspondence is also compatible with tensor products and with duals, i.e for homomorphisms $\delta_1, \delta_2:K^{\times}
\rightarrow E^{\times}$, we have  $W(\delta_1)\otimes W(\delta_2)\isom W(\delta_1\delta_2)$ and $W(\delta_1)^{\vee}
\isom W(\delta_1^{-1})$. There are important examples of rank one $E$-$B$-pairs which we recall now.
For any $\{k_{\sigma}\}_{\sigma\in\mathcal{P}}$ such that $k_{\sigma}\in\mathbb{Z}$, we define a homomorphism 
$\prod_{\sigma\in\mathcal{P}}\sigma(x)^{k_{\sigma}}:K^{\times}\rightarrow E^{\times}:y\mapsto \prod_{\sigma\in \mathcal{P}}
\sigma(y)^{k_{\sigma}}$.  Then we have $W(\prod_{\sigma\in\mathcal{P}}\sigma(x)^{k_{\sigma}})=(B_e\otimes_{\mathbb{Q}_p}E, 
\oplus_{\sigma\in\mathcal{P}}t^{k_{\sigma}}B^+_{\mathrm{dR}}\otimes_{K,\sigma}E)$ ( Lemma 2.12 of [Na09]).
 Let  $N_{K/\mathbb{Q}_p}:K^{\times}\rightarrow \mathbb{Q}_p^{\times}$ be the norm map and 
 $|-|:\mathbb{Q}_p^{\times}\rightarrow \mathbb{Q}^{\times}\hookrightarrow E^{\times}$ be the $p$-adic absolute value 
 such that $|p|=\frac{1}{p}$, and we define $|N_{K/\mathbb{Q}_p}(x)|:K^{\times}\rightarrow E^{\times}$  the composite
 of $N_{K/\mathbb{Q}_p}$ and $|-|$. Then we have $W(|N_{K/\mathbb{Q}_p}(x)|\prod_{\sigma\in \mathcal{P}}\sigma(x))=W(E(\chi))$, which is the $E$-$B$-pair
 associated to $p$-adic cyclotomic homomorphism.  These homomorphisms play special roles in the calculation of Galois cohomology of 
 $E$-$B$-pairs. 
 Next, we recall the definition and some properties of Galois cohomology of $E$-$B$-pairs. 
 For an $E$-$B$-pair $W:=(W_e, W^+_{\mathrm{dR}})$, we put $W_{\mathrm{dR}}:=B_{\mathrm{dR}}\otimes_{B_e}W_e$. 
 We have natural inclusions, $W_e\hookrightarrow W_{\mathrm{dR}}$, $W^+_{\mathrm{dR}}
 \hookrightarrow W_{\mathrm{dR}}$. Then we define the Galois cohomology $\mathrm{H}^i(G_K, W)$ of $W$ to be  
 the Galois cohomology of the complex $W_e\oplus W^+_{\mathrm{dR}}\rightarrow W_{\mathrm{dR}}:(x,y)\mapsto 
 x+y$ (see $\S$2.1 of [Na09] for the precise definition). Then as in the usual $p$-adic representation case, we have $\mathrm{H}^0(G_K, W)
 =\mathrm{Hom}_{G_K}(B_E, W)$ and $\mathrm{H}^1(G_K, W)=\mathrm{Ext}^1(B_E, W)$, where $B_E:=(B_e\otimes_{\mathbb{Q}_p}E, 
 B^+_{\mathrm{dR}}\otimes_{\mathbb{Q}_p}E)$ is the trivial $E$-$B$-pair and $\mathrm{Hom}_{G_K}(-,-)$ is the group of homomorphisms
 of $E$-$B$-pairs and $\mathrm{Ext}^1(-,-)$ is the extension class group in the category of $E$-$B$-pairs. If 
 $V$ is an $E$-representation of $G_K$, then we have $\mathrm{H}^i(G_K, V)\isom \mathrm{H}^i(G_K, W(V))$, which follows from 
 Bloch-Kato's fundamental short exact sequence $0\rightarrow \mathbb{Q}_p\rightarrow B_e\oplus B^+_{\mathrm{dR}}
 \rightarrow B_{\mathrm{dR}}\rightarrow 0$.
 Moreover, we have the following theorem, Euler-Poincar$\mathrm{\acute{e}}$ homomorphismistic formula and Tate duality theorem for 
 $B$-pairs.
 \begin{thm}\label{h}
 Let $W$ be an $E$-$B$-pair. Then$:$
 \begin{itemize}
 \item[(1)] for $i=0,1,2$, $\mathrm{H}^i(G_K, W)$ is finite dimensional over $E$ and $\mathrm{H}^i(G_K, W)=0$ for 
 $i\not= 0,1,2$.
 \item[(2)] $\sum_{i=0}^2(-1)^i \mathrm{dim}_E\mathrm{H}^i(G_K, W)=[K:\mathbb{Q}_p] \mathrm{rank}(W)$.
 \item[(3)] let $W$ be a $\mathbb{Q}_p$-$B$-pair, Then, for any $i=0,1,2$, there is a natural perfect pairing defined by cup 
 product,
 \item[] \[
              \begin{array}{ll}
              \mathrm{H}^i(G_K, W)\times\mathrm{H}^{2-i}(G_K, W^{\vee}(\chi))&\rightarrow \mathrm{H}^2(G_K, W\otimes W^{\vee}(
 \chi)) \\
  &\rightarrow \mathrm{H}^2(G_K, W(\mathbb{Q}_p(\chi))\allowbreak \isom \mathbb{Q}_p,
  \end{array}
  \]
 \item[] where the last isomorphism is the trace map.
 \end{itemize}
 \end{thm}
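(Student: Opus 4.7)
The plan is to deduce all three assertions from the corresponding classical theorems of Tate for continuous $p$-adic representations of $G_K$ (finiteness, Euler-Poincar\'e, and local duality), via the embedding $V\mapsto W(V)$ and a devissage that exhibits any $B$-pair as a successive extension of representation-type pieces and simple auxiliary pieces whose cohomology is explicitly computable.

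First I observe that the definition of $\mathrm{H}^i(G_K,W)$ as continuous hypercohomology of the two-term complex $C^\bullet(W):=[W_e\oplus W^+_{\mathrm{dR}}\to W_{\mathrm{dR}}]$ forces vanishing in negative degrees automatically, while vanishing for $i\geqq 3$ reduces, via the Bloch-Kato fundamental sequence $0\to \mathbb{Q}_p\to B_e\oplus B^+_{\mathrm{dR}}\to B_{\mathrm{dR}}\to 0$, to Tate's theorem $\mathrm{cd}_p(G_K)=2$ applied to a $G_K$-stable $\mathbb{Z}_p$-lattice inside each term of the complex. Tensoring the same fundamental sequence with a $p$-adic representation $V$ yields a quasi-isomorphism of $G_K$-complexes $V\simeq C^\bullet(W(V))$, so $\mathrm{H}^i(G_K,W(V))\cong \mathrm{H}^i(G_K,V)$ for all $i$; Tate's classical theorems then settle (1), (2), (3) in the special case $W=W(V)$. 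Restriction of scalars further reduces the $E$-coefficient case to $\mathbb{Q}_p$-coefficients.

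The heart of the plan is a devissage of a general $B$-pair $W$ into representation-type pieces. The cleanest route is to invoke the equivalence between the category of $B$-pairs of $G_K$ and the category of $(\varphi,\Gamma_K)$-modules over the Robba ring $\mathcal{R}_K$, under which the $B$-pair cohomology matches Herr-complex cohomology of the corresponding $(\varphi,\Gamma_K)$-module, and where by Kedlaya's slope filtration every such module is built from \'etale pieces, which correspond exactly to $p$-adic representations. Alternatively, one can construct a short exact sequence $0\to W(V)\to W\to W''\to 0$ directly by extracting a $G_K$-stable $\mathbb{Z}_p$-lattice from a suitable $G_K$-stable submodule of $W_e$, so that the quotient $W''$ has $e$-part and $\mathrm{dR}^+$-part that are extensions of simple Hodge-Tate pieces of the form $\mathbb{C}_p(j)$ whose cohomology is given by Tate's explicit computation of $\mathrm{H}^i(G_K,\mathbb{C}_p(j))$. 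Either way, finiteness (1) and additivity of the Euler-Poincar\'e formula (2) then propagate from $V$ and the simple pieces to $W$ through the long exact sequence of $B$-pair cohomology.

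For Tate duality (3), I would construct the cup product at the complex level as $C^\bullet(W)\otimes C^\bullet(W^\vee(\chi))\to C^\bullet(\mathbb{Q}_p(\chi))$, compose $\mathrm{H}^2$ with Tate's trace $\mathrm{H}^2(G_K,\mathbb{Q}_p(\chi))\cong \mathbb{Q}_p$, and deduce perfectness by applying the above devissage to both $W$ and $W^\vee(\chi)$ and invoking Tate's local duality for the representation $V$. The main obstacle is the devissage itself: one must either set up the full equivalence with $(\varphi,\Gamma_K)$-modules (substantial technical input, due to Berger for $K=\mathbb{Q}_p$ and its generalization for arbitrary $K$) or construct the direct exact sequence $0\to W(V)\to W\to W''\to 0$ and carefully track the cohomology of $W''$ piece by piece, including compatibility of the duality pairing with the short exact sequences on both sides. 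Once the devissage is in hand, the rest is a routine long-exact-sequence and cup-product compatibility argument.
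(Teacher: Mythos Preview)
The paper does not prove this theorem itself; it cites Theorems 4.3 and 4.7 of Liu and Theorem 2.8 of [Na09]. Your primary route---transporting the problem to $(\varphi,\Gamma)$-modules over the Robba ring via Berger's equivalence---is indeed the setting of Liu's proof, but your d\'evissage has a real gap. Kedlaya's slope filtration does \emph{not} decompose a $(\varphi,\Gamma)$-module into \'etale pieces: it produces a filtration whose graded pieces are \emph{pure} (isoclinic) of increasing slopes, and only the slope-zero graded piece is \'etale. A pure module of nonzero slope does not come from a Galois representation, so after applying the slope filtration you have not yet reduced to Tate's classical theorems. The passage from a general pure module to something computable is exactly the nontrivial content of Liu's argument and is missing from your sketch. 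Your alternative route (b) founders on the same obstruction: a finite free $B_e$-module with continuous semilinear $G_K$-action is of the form $B_e\otimes_{\mathbb{Q}_p}V$ for a representation $V$ only when it is pure of slope zero, so for a general $W$ one cannot extract a $G_K$-stable $\mathbb{Z}_p$-lattice from $W_e$, nor a saturated sub-$B$-pair of the form $W(V)$.

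A smaller problem: your argument for vanishing in degrees $i\geqq 3$ invokes $\mathrm{cd}_p(G_K)=2$ on ``a $G_K$-stable $\mathbb{Z}_p$-lattice inside each term of the complex'', but $W_e$, $W^+_{\mathrm{dR}}$, $W_{\mathrm{dR}}$ are infinite-dimensional over $\mathbb{Q}_p$ and admit no such lattices. On the $(\varphi,\Gamma)$-module side this vanishing is immediate because the Herr complex is concentrated in three degrees; on the $B$-pair side it requires its own justification.
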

 \begin{proof}
 See Theorem 4.3, 4.7 of [Liu08] and Theorem 2.8 of [Na09].
 \end{proof}
 By using these formulae, we have the following dimension formulae for rank one $E$-$B$-pairs.
 \begin{prop}\label{i}
 Let $\delta:K^{\times}\rightarrow E^{\times}$ be a continuous homomorphism. Then we have$:$
 \begin{itemize}
 \item[(1)] $\mathrm{H}^0(G_K, W(\delta))\isom E $ if $\delta=\prod_{\sigma\in \mathcal{P}}\sigma(x)^{k_{\sigma}}$ 
 such that $k_{\sigma}\in \mathbb{Z}_{\leqq 0}$ for any $\sigma\in \mathcal{P}$, and $\mathrm{H}^0(G_K, W(\delta))=0$ otherwise.
 \item[(2)] $\mathrm{H}^2(G_K, W(\delta))\isom E$ if $\delta=|N_{K/\mathbb{Q}_p}|\prod_{\sigma\in\mathcal{P}}\sigma(x)^{k_{\sigma}}$ 
 such that $k_{\sigma}\in \mathbb{Z}_{\geqq 1}$ for any $\sigma\in \mathcal{P}$, and $\mathrm{H}^2(G_K, W(\delta))=0$ otherwise.
 \item[(3)] $\mathrm{dim}_E\mathrm{H}^1(G_K, W(\delta))=[K:\mathbb{Q}_p] +1$ if $\delta=\prod_{\sigma\in \mathcal{P}}\sigma(x)^{k_{\sigma}}$ such that $k_{\sigma}\in \mathbb{Z}_{\leqq 0}$ for any $\sigma\in \mathcal{P}$ or $\delta=
 |N_{K/\mathbb{Q}_p}|\prod_{\sigma\in \mathcal{P}}\sigma(x)^{k_{\sigma}}$ such that $k_{\sigma}\in \mathbb{Z}_{\geqq 1}$ for any $\sigma\in \mathcal{P}$, and $\mathrm{dim}_E\mathrm{H}^1(G_K, W(\delta))=[K:\mathbb{Q}_p]$ otherwise.
 \end{itemize}
 \end{prop}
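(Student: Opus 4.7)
The plan is to prove (1) directly, then derive (2) from (1) by Tate duality, and finally obtain (3) from the Euler--Poincar\'e formula applied to the outputs of (1) and (2).

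For (1), write $\mathrm{H}^0(G_K, W(\delta)) = \mathrm{Hom}_{G_K}(B_E, W(\delta))$: such a morphism is determined by a $G_K$-invariant element $v\in W(\delta)_e$ whose image in $W(\delta)_{\mathrm{dR}}$ lies in $W(\delta)^+_{\mathrm{dR}}$. For sufficiency, suppose $\delta = \prod_{\sigma}\sigma(x)^{k_\sigma}$ with all $k_\sigma\leqq 0$. The explicit description $W(\delta) = (B_e\otimes_{\mathbb{Q}_p}E,\ \bigoplus_\sigma t^{k_\sigma}B^+_{\mathrm{dR}}\otimes_{K,\sigma}E)$ recalled after Theorem $\ref{g}$ shows that the $G_K$-action on $W(\delta)_e$ is the standard one, so $W(\delta)_e^{G_K} = (B_e\otimes_{\mathbb{Q}_p}E)^{G_K} = E$; and the constant $1\in E$ lies in each summand $t^{k_\sigma}B^+_{\mathrm{dR}}\otimes_{K,\sigma}E$ precisely because $k_\sigma\leqq 0$. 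This gives $\mathrm{H}^0(G_K, W(\delta))\isom E$. For necessity, a nonzero $v\in W(\delta)_e^{G_K}$ generates an inclusion $B_E\hookrightarrow W(\delta)$ of $E$-$B$-pairs, whose saturation is a saturated rank-one sub $E$-$B$-pair and hence equals $W(\delta)$. Using Theorem $\ref{g}$ together with the compatibility of $\delta\mapsto W(\delta)$ with local class field theory recalled after it, the existence of a $G_K$-invariant basis of $W(\delta)_e$ forces the unitary part of $\delta$ to be trivial, whence $\delta = \prod_\sigma\sigma(x)^{k_\sigma}$; the lattice condition $v\in W(\delta)^+_{\mathrm{dR}}$ then forces $k_\sigma\leqq 0$ for every $\sigma$.

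For (2), Tate duality (Theorem $\ref{h}$(3)) gives $\mathrm{H}^2(G_K, W(\delta))\isom \mathrm{H}^0(G_K, W(\delta)^\vee\otimes W(\chi))^\vee$. Using the compatibility of $\delta\mapsto W(\delta)$ with duals and tensor products, together with the identification $W(\chi) \isom W(|N_{K/\mathbb{Q}_p}(x)|\prod_\sigma\sigma(x))$, we obtain $W(\delta)^\vee\otimes W(\chi)\isom W(\delta^{-1}|N_{K/\mathbb{Q}_p}(x)|\prod_\sigma\sigma(x))$. Applying (1) to this twisted character, its $\mathrm{H}^0$ is $E$ iff $\delta^{-1}|N_{K/\mathbb{Q}_p}(x)|\prod_\sigma\sigma(x) = \prod_\sigma\sigma(x)^{-m_\sigma}$ with all $m_\sigma\leqq 0$, which rearranges to $\delta = |N_{K/\mathbb{Q}_p}(x)|\prod_\sigma\sigma(x)^{k_\sigma}$ with all $k_\sigma\geqq 1$, as claimed.

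For (3), the Euler--Poincar\'e formula of Theorem $\ref{h}$(2) yields, for the rank-one $B$-pair $W(\delta)$, the relation $\dim_E\mathrm{H}^1(G_K, W(\delta)) = \dim_E\mathrm{H}^0(G_K, W(\delta)) + \dim_E\mathrm{H}^2(G_K, W(\delta)) + [K:\mathbb{Q}_p]$. The character families in (1) and (2) are disjoint, as they differ by the factor $|N_{K/\mathbb{Q}_p}|$, which is nontrivial on any uniformizer, so at most one of $\dim\mathrm{H}^0, \dim\mathrm{H}^2$ can equal one. Combining the three possibilities yields the stated dichotomy $[K:\mathbb{Q}_p]+1$ versus $[K:\mathbb{Q}_p]$. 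The principal obstacle is the necessity half of (1): one must argue carefully that a $G_K$-invariant vector in $W(\delta)_e$ lying inside $W(\delta)^+_{\mathrm{dR}}$ forces $\delta$ to be purely algebraic with non-positive Hodge--Tate weights, which requires the full strength of the classification in Theorem $\ref{g}$ together with the explicit lattice description of the algebraic $W(\prod_\sigma\sigma(x)^{k_\sigma})$.
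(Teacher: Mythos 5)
The paper's own ``proof'' of Proposition~\ref{i} is a citation to [Co08] (for $K=\mathbb{Q}_p$) and to Propositions 2.14 and 2.15 of [Na09] together with Tate duality; your proposal supplies the actual argument, and its structure --- compute $\mathrm{H}^0$ from the explicit lattice description $W(\prod_\sigma\sigma(x)^{k_\sigma})=(B_e\otimes_{\mathbb{Q}_p}E,\oplus_\sigma t^{k_\sigma}B^+_{\mathrm{dR}}\otimes_{K,\sigma}E)$, derive $\mathrm{H}^2$ from $\mathrm{H}^0$ via Tate duality and the identity $W(\delta)^{\vee}(\chi)\isom W(\delta^{-1}|N_{K/\mathbb{Q}_p}(x)|\prod_\sigma\sigma(x))$, and then read off $\mathrm{H}^1$ from the Euler--Poincar\'e formula using disjointness of the two character families --- is exactly the route the paper is pointing to. So the approach is essentially the same.

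Two small points worth tidying. First, in step (2) you have a sign slip: if $\delta^{-1}|N_{K/\mathbb{Q}_p}(x)|\prod_\sigma\sigma(x)=\prod_\sigma\sigma(x)^{-m_\sigma}$, then part (1) requires $-m_\sigma\leqq 0$, i.e.\ $m_\sigma\geqq 0$, not $m_\sigma\leqq 0$; the final conclusion $k_\sigma\geqq 1$ is nonetheless correct. Second, Theorem~\ref{h}(3) is stated only for $\mathbb{Q}_p$-$B$-pairs, so to apply it to $W(\delta)$ you must restrict scalars and check that the resulting pairing is $E$-bilinear (equivalently, that the $E$-linear $\mathrm{H}^2(G_K,W(\chi)\otimes_{\mathbb{Q}_p}E)\isom E$ trace map identifies the $E$-duals); this is precisely what the cited Proposition 2.15 of [Na09] handles, but your write-up silently applies the $\mathbb{Q}_p$-statement in the $E$-coefficient setting. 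Neither issue affects the correctness of the conclusions, and also note the paper's formula in Theorem~\ref{h}(2) is missing a minus sign (the Euler characteristic is $-[K:\mathbb{Q}_p]\,\mathrm{rank}(W)$), which you have implicitly and correctly used.
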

 \begin{proof}
 See Theorem 2.9 and Theorem 2.22 of [Co08] for $K=\mathbb{Q}_p$. For general $K$, the results may be proved by using 
 Proposition 2.14 and Proposition 2.15 of [Na09] and Tate duality for $B$-pairs.
 \end{proof}

\subsection{$B$-pairs over Artin local rings}
Let $\mathcal{C}_E$ 
be the category of Artin local $E$-algebra $A$ with residue field $E$. The morphisms of $\mathcal{C}_E$ are given by 
local $E$-algebra homomorphisms. For any $A\in \mathcal{C}_E$, let $m_A$ be the maximal ideal of $A$. We define the $A$-coefficient versions of $B$-pairs as follows.
\begin{defn}\label{1}
We call a pair $W:=(W_e, W^+_{\mathrm{dR}})$ an $A$-$B$-pair of $G_K$ if 
\begin{itemize}
\item[(1)] $W_e$ is a finite $B_e\otimes_{\mathbb{Q}_p}A$-module which is flat over $A$ 
and is free over $B_e$, with a continuous semi-linear $G_K$-action.
\item[(2)] $W^+_{\mathrm{dR}}$ is a finite generated sub $B^+_{\mathrm{dR}}\otimes_{\mathbb{Q}_p}A$-module of $B_{\mathrm{dR}}\otimes_{B_e}W_e$ which is preserved by $G_K$-action and which generates 
$B_{\mathrm{dR}}\otimes_{B_e}W_e$ as a $B_{\mathrm{dR}}\otimes_{\mathbb{Q}_p}A$-module such that 
$W^+_{\mathrm{dR}}/tW^+_{\mathrm{dR}}$ is flat over $A$.
\end{itemize}
For an $A$-$B$-pair $W:=(W_e,W^+_{\mathrm{dR}})$, we put $W_{\mathrm{dR}}:=B_{\mathrm{dR}}
\otimes_{B_e}W_e$.
\end{defn}
We simply call an $A$-$B$-pair if there is no risk of confusing about $K$.

\begin{lemma}\label{2}
Let $W:=(W_e,W^+_{\mathrm{dR}})$ be an $A$-$B$-pair. Then 
$W_e$ is a finite free $B_e\otimes_{\mathbb{Q}_p}A$-module and 
$W^+_{\mathrm{dR}}$ is a finite free $B^+_{\mathrm{dR}}\otimes_{\mathbb{Q}_p}A$-module 
and $W^+_{\mathrm{dR}}/tW^+_{\mathrm{dR}}$ is a finite free $\mathbb{C}_p\otimes_{\mathbb{Q}_p}A$-module.
\end{lemma}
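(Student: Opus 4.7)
The plan is to lift a basis from the reduction modulo $m_A$ and use Nakayama's lemma, invoking the $E$-coefficient case (Lemmas 1.7 and 1.8 of [Na09]) at the bottom. Set $n:=\mathrm{rank}(W)$ and $N:=\dim_E A$, and choose a composition series $0=I_N\subsetneq I_{N-1}\subsetneq\cdots\subsetneq I_0=A$ by $A$-ideals with each $I_k/I_{k+1}\cong E$. Since $W_e$ is $A$-flat, the induced filtration on $W_e$ has graded pieces $I_kW_e/I_{k+1}W_e$ isomorphic to $W_e/m_AW_e$ as $B_e\otimes_{\mathbb{Q}_p}E$-modules; the analogous statement holds for $W^+_{\mathrm{dR}}/tW^+_{\mathrm{dR}}$ via its $A$-flatness hypothesis.

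Before invoking [Na09, Lemma 1.7], I would verify that $(W_e/m_AW_e,\,W^+_{\mathrm{dR}}/m_AW^+_{\mathrm{dR}})$ is an $E$-$B$-pair; the only non-routine point is the $B_e$-freeness of $W_e/m_AW_e$. Observing that the smallest non-zero piece $I_{N-1}W_e$ of the filtration is $B_e$-isomorphic to $W_e/m_AW_e$ and sits as a finitely generated $B_e$-submodule of the $B_e$-free module $W_e$, the Bezout property of $B_e$ (finitely generated torsion-free modules over a Bezout domain are free) delivers this freeness. Lemmas 1.7 and 1.8 of [Na09] then give that $W_e/m_AW_e$ and $W^+_{\mathrm{dR}}/m_AW^+_{\mathrm{dR}}$ are free over $B_e\otimes_{\mathbb{Q}_p}E$ and $B^+_{\mathrm{dR}}\otimes_{\mathbb{Q}_p}E$ respectively, both of rank $n$.

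For $W_e$: choose $e_1,\dots,e_n\in W_e$ whose reductions form a $B_e\otimes_{\mathbb{Q}_p}E$-basis of $W_e/m_AW_e$, and let $\phi:(B_e\otimes_{\mathbb{Q}_p}A)^n\to W_e$ be the resulting map. Nakayama's lemma (applicable because $m_A$ is nilpotent, so $m_A(B_e\otimes_{\mathbb{Q}_p}A)$ lies in the Jacobson radical) shows $\phi$ is surjective. For injectivity I would compare $B_e$-ranks: the $A$-filtration together with the freeness of each graded piece gives $\mathrm{rank}_{B_e}(W_e)=n[E:\mathbb{Q}_p]N=\mathrm{rank}_{B_e}((B_e\otimes_{\mathbb{Q}_p}A)^n)$, and since $W_e$ is $B_e$-free the exact sequence $0\to\ker\phi\to(B_e\otimes_{\mathbb{Q}_p}A)^n\to W_e\to 0$ splits over $B_e$, so $\ker\phi$ is a $B_e$-torsion-free module of $B_e$-rank zero, hence vanishes.

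The same strategy applied to $W^+_{\mathrm{dR}}/tW^+_{\mathrm{dR}}$ (using $A$-flatness of this quotient and Lemma 1.8 modulo $t$) yields freeness over $\mathbb{C}_p\otimes_{\mathbb{Q}_p}A$, with the rank comparison performed over the field $\mathbb{C}_p$. For $W^+_{\mathrm{dR}}$ itself over $B^+_{\mathrm{dR}}\otimes_{\mathbb{Q}_p}A$: lift a $\mathbb{C}_p\otimes_{\mathbb{Q}_p}A$-basis of $W^+_{\mathrm{dR}}/tW^+_{\mathrm{dR}}$ to $W^+_{\mathrm{dR}}$ to obtain $\psi:(B^+_{\mathrm{dR}}\otimes_{\mathbb{Q}_p}A)^n\to W^+_{\mathrm{dR}}$, which is an isomorphism modulo $t$; the $t$-adic Nakayama lemma ($B^+_{\mathrm{dR}}$ is a complete DVR with uniformizer $t$ and $W^+_{\mathrm{dR}}$ is finite over $B^+_{\mathrm{dR}}$) gives surjectivity, and injectivity comes from a $B^+_{\mathrm{dR}}$-rank count noting that $W^+_{\mathrm{dR}}$ is $t$-torsion-free inside $W_{\mathrm{dR}}$ and finite over the DVR $B^+_{\mathrm{dR}}$, hence free of rank $\dim_{\mathbb{C}_p}(W^+_{\mathrm{dR}}/tW^+_{\mathrm{dR}})=n[E:\mathbb{Q}_p]N$. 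The main obstacle is the single appeal to the Bezout property of $B_e$ to pass $B_e$-freeness through the reduction modulo $m_A$; everything else is routine Nakayama and rank bookkeeping.
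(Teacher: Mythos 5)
Your argument is correct and shares the skeleton of the paper's proof (reduce to the $E$-coefficient case via Lemmas 1.7 and 1.8 of [Na09], lift a basis, apply Nakayama), but three of the supporting steps are genuinely different. (i) To obtain $B_e$-freeness of the reduction, the paper considers the submodule $m_AW_e\subseteq W_e$, invokes Lemma 2.4 of [Ke04] for its freeness, and then Lemma 2.1.4 of [Be08] to conclude that the cokernel $W_e\otimes_AE$ is $B_e$-free; you instead use $A$-flatness and a composition series of $A$ to identify the quotient $W_e\otimes_AE$ with the submodule $I_{N-1}W_e$, so that the Bézout property of $B_e$ applies directly to a finitely generated torsion-free submodule of a free module. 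This is slightly cleaner and dispenses with Berger's lemma. (ii) For injectivity of the lifted surjection $\phi$, the paper tensors the kernel sequence with $E$ over $A$ (using $A$-flatness of $W_e$) to get $\mathrm{Ker}(\phi)\otimes_AE=0$ and concludes by nilpotence of $m_A$; your $B_e$-rank count, using the splitting coming from $B_e$-freeness of $W_e$, also works but is a bit heavier. (iii) For the de Rham part the paper first deduces $A$-flatness of $W^+_{\mathrm{dR}}$ from that of $W^+_{\mathrm{dR}}/tW^+_{\mathrm{dR}}$ (via flatness of $W_{\mathrm{dR}}/W^+_{\mathrm{dR}}$), runs the same Nakayama argument with Lemma 1.8 of [Na09], and obtains freeness of $W^+_{\mathrm{dR}}/tW^+_{\mathrm{dR}}$ as an immediate corollary; you reverse the order, treating $W^+_{\mathrm{dR}}/tW^+_{\mathrm{dR}}$ first and then lifting $t$-adically. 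Your $t$-adic lifting step is sound ($W^+_{\mathrm{dR}}$ is $t$-torsion-free and finite over the complete DVR $B^+_{\mathrm{dR}}$), but the appeal to ``Lemma 1.8 modulo $t$'' --- freeness over $\mathbb{C}_p\otimes_{\mathbb{Q}_p}E$ of a finite module with continuous semilinear $G_K$-action --- is to a statement not literally recorded in [Na09]; it is true and provable by the same idempotent/Galois-orbit argument, but if you want to quote only what is stated you should follow the paper's order and apply Lemma 1.8 to $W^+_{\mathrm{dR}}\otimes_AE$ itself.
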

\begin{proof}
First, we prove for $W_e$. Because the sub module $m_A W_e\subseteq W_e$ is a $G_K$-stable finite generated 
torsion free $B_e$-module, $m_AW_e$ is a finite free $B_e$-module by Lemma 2.4 of [Ke04]. 
Then, by Lemma 2.1.4 of [Be08], the cokernel $W_e\otimes_A E$ is also a finite free $B_e$-module 
(with an $E$-action). Then, by Lemma 1.7 of [Na09], $W_e\otimes_AE$ is a finite free $B_e\otimes_{\mathbb{Q}_p}E$-module of 
some rank $n$.
We take a $B_e\otimes_{\mathbb{Q}_p}A$-linear morphism $f:(B_e\otimes_{\mathbb{Q}_p}A)^n\rightarrow W_e$ which 
is a lift of a $B_e\otimes_{\mathbb{Q}_p}E$-linear isomorphism $(B_e\otimes_{\mathbb{Q}_p}E)^n\isom W_e\otimes_A E$.
Then, by Nakayama's lemma, $f$ is surjective. Because $W_e$ is $A$-flat, we have $\mathrm{Ker}(f)\otimes_AE=0$, 
so $\mathrm{Ker}(f)=0$. So $W_e$ is a free $B_e\otimes_{\mathbb{Q}_p}A$-module.

Next, we prove that $W^+_{\mathrm{dR}}$ is a free $B^+_{\mathrm{dR}}\otimes_{\mathbb{Q}_p}A$-module.
Because $W_e$ is a free $B_e\otimes_{\mathbb{Q}_p}A$-module, $W_{\mathrm{dR}}$ is a free $B_{\mathrm{dR}}\otimes_{\mathbb{Q}_p}A$-module, in particular this is flat over $A$. Because $W^+_{\mathrm{dR}}/tW^+_{\mathrm{dR}}$ is a flat $A$-module, $W_{\mathrm{dR}}/W^+_{\mathrm{dR}}$ is also  a flat $A$-module. So $W^+_{\mathrm{dR}}$ is also flat over $A$. 
By the $A$-flatness of $W_{\mathrm{dR}}/W^+_{\mathrm{dR}}$, we have inclusion 
$W^+_{\mathrm{dR}}\otimes_A E\hookrightarrow W_{\mathrm{dR}}\otimes_{A} E$, so 
$W^{+}_{\mathrm{dR}}\otimes_A E$ is a finite generated torsion free $B^+_{\mathrm{dR}}$-module, so 
free $B^+_{\mathrm{dR}}$-module. By using these and Lemma 1.8 of [Na09], in the same way as in the case of $W_e$, we can show that 
$W^+_{\mathrm{dR}}$ is a free $B^+_{\mathrm{dR}}\otimes_{\mathbb{Q}_p}A$-module.
$\mathbb{C}_p\otimes_{\mathbb{Q}_p}A$-freeness of $W^+_{\mathrm{dR}}/tW^+_{\mathrm{dR}}$ follows from 
$B^+_{\mathrm{dR}}\otimes_{\mathbb{Q}_p}A$-freeness of $W^+_{\mathrm{dR}}$.

\end{proof}
By using this lemma, we can define the rank of $A$-$B$-pair.
\begin{defn}
Let $W=(W_e, W^+_{\mathrm{dR}})$ be an $A$-$B$-pair. Then we define the rank of $W$ as 
$\mathrm{rank}(W):=\mathrm{rank}_{B_e\otimes_{\mathbb{Q}_p}A}(W_e)$.
\end{defn}

\begin{defn}\label{3}
Let $f:A\rightarrow A'$ be a morphism in $\mathcal{C}_E$ and $W=(W_e,W_{\mathrm{dR}})$ be an $A$-$B$-pair.
Then we define the base change of $W$ to $A'$ by $W\otimes_A A':=(W_e\otimes_A A',W^+_{\mathrm{dR}}\otimes_A A')$.
By using lemma $\ref{2}$, we can easily see that this is an $A'$-$B$-pair.
\end{defn}

\begin{defn}\label{4}
Let $W_1=(W_{e,1},W^+_{\mathrm{dR},1})$, $W_2=(W_{e,2}, W^+_{\mathrm{dR},2})$ be $A$-$B$-pairs.
Then we define the tensor product of $W_1$ and $W_2$ by $W_1\otimes W_2:=(W_{e,1}\otimes_{B_e\otimes_{\mathbb{Q}_p}A}W_{e,2}, 
W^+_{\mathrm{dR},1}\otimes_{B^+_{\mathrm{dR}}\otimes_{\mathbb{Q}_p}A}W^+_{\mathrm{dR},2})$ , 
and define the dual of $W_1$ by $W_1^{\vee}:=(\mathrm{Hom}_{B_e\otimes_{\mathbb{Q}_p}A}(W_{e,1},\allowbreak B_e\otimes_{\mathbb{Q}_p}A), 
W^{+,\vee}_{\mathrm{dR},1})$. Here, $W^{+,\vee}_{\mathrm{dR},1}:=\{f\in\mathrm{Hom}_{B_{\mathrm{dR}}\otimes_{\mathbb{Q}_p}A}(
W_{\mathrm{dR},1}, B_{\mathrm{dR}}\otimes_{\mathbb{Q}_p}A)|f(W^{+}_{\mathrm{dR},1})\subseteq B^+_{\mathrm{dR}}\otimes_{\mathbb{Q}_p}A\}$. By using lemma $\ref{2}$, we can easily see that these are $A$-$B$-pairs.
\end{defn}

Next, we classify rank one $A$-$B$-pairs.
Let $\delta:K^{\times}\rightarrow A^{\times}$ be a continuous homomorphism.
Then we define the rank one $A$-$B$-pair $W(\delta)$ as follows.
Let $\bar{u}\in E^{\times}$ be the reduction of $u:=\delta(\pi_K)$.
We define a homomorphism $\delta_0:K^{\times}\rightarrow A^{\times}$ such that $\delta_0|_{\mathcal{O}_K^{\times}}
=\delta|_{\mathcal{O}_K^{\times}}$, $\delta_0(\pi_K)=u/\bar{u}$. Then, because $u/\bar{u}\in 1+m_A$ ,  $(u/\bar{u})^{p^n}$ 
converges to $1\in A^{\times}$. So if we fix an isomorphism $K^{\times}\isom \mathcal{O}_K^{\times}\times \mathbb{Z}
:v\pi_K^n\mapsto (v,n)$ (here $v\in\mathcal{O}_K^{\times}$), then $\delta_0$ extends to $\delta_0':\mathcal{O}_K^{\times}\times 
\hat{\mathbb{Z}}\rightarrow \mathcal{O}_K^{\times}\times \mathbb{Z}_p\rightarrow A^{\times}$. Hence, by local class field theory, there exists unique homomorphism $\tilde{\delta}_0:G_K^{\mathrm{ab}}\rightarrow A^{\times}$ such 
that $\delta_0=\tilde{\delta}_0\circ\mathrm{rec}_K$, here $\mathrm{rec}_K:K^{\times}\rightarrow G_K^{\mathrm{ab}}$ is the reciprocity map as in Notation. We define the 
$\mathrm{\acute{e}}$tale rank one $A$-$B$-pair $W(A(\tilde{\delta}_0))$, which is the $A$-$B$-pair associated to the rank one $A$-representation $A(\tilde{\delta}_0)$.
Next, we define a non-$\mathrm{\acute{e}}$tale rank one $A$-$B$-pair by using $\bar{u}\in E^{\times}$. 
For this, first we define a rank one $E$-filtered $\varphi$-module $D_{\bar{u}}:=K_0\otimes_{\mathbb{Q}_p}Ee_{\bar{u}}$ such that 
$\varphi^f(e_{\bar{u}}):=\bar{u}e_{\bar{u}}$ and $\mathrm{Fil}^0(K\otimes_{K_0}D_{\bar{u}}):=K\otimes_{K_0}D_{\bar{u}}$, $\mathrm{Fil}^1(K\otimes_{K_0}D_{\bar{u}}):=0$.
From this, we get the rank one crystalline $E$-$B$-pair $W(D_{\bar{u}})$ such that $D_{\mathrm{cris}}(W(D_{\bar{u}}))\isom D_{\bar{u}}$ which is 
pure of slope $\frac{v_p(\bar{u})}{f}$.
By tensoring these, we define the rank one $A$-$B$-pair $W(\delta)$ by $W(\delta):=(W(D_{\bar{u}})\otimes_EA)\otimes W(A(\tilde{\delta}_0))$,
 which is pure of slope $\frac{v_p(\bar{u})}{f}$.
 
 The following proposition is the $A$-coefficient version of Theorem 1.45 of [Na09].
 
 \begin{prop}\label{5}
 This construction $\delta\mapsto W(\delta)$ does not depend on the 
 choice of uniformizer $\pi_K$ and gives a bijection between 
 the set of continuous homomorphisms $\delta:K^{\times}\rightarrow A^{\times}$ and 
 the set of isomorphism classes of rank one $A$-$B$-pairs.
 \end{prop}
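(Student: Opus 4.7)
The plan is to proceed by induction on the length $\ell=\mathrm{length}_E(A)$. The base case $\ell=1$, where $A=E$, is Theorem \ref{g} together with the independence of $\pi_K$ established in [Na09, Theorem 1.45]. For the inductive step, write $A$ as a small extension
\[
0\rightarrow I\rightarrow A\rightarrow A'\rightarrow 0
\]
with $I$ a square-zero principal ideal isomorphic to $E$ as an $E$-module, and assume the proposition holds over $A'$. Independence of the choice of $\pi_K$ then reduces to observing that replacing $\pi_K$ by $u\pi_K$ with $u\in\mathcal{O}_K^{\times}$ modifies $\bar{u}\in E^{\times}$, the filtered $\varphi$-module $D_{\bar{u}}$, and the \'etale character $\tilde{\delta}_0$ in compensating ways, exactly as in the $E$-case; since the resulting $A$-$B$-pair agrees with the original modulo $I$, it is isomorphic over $A$ as well.

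For bijectivity, the key observation is that both sides of the correspondence, relative to a fixed reduction modulo $I$, carry natural torsor structures. Since $I^{2}=0$, the set of lifts of a fixed $\delta':K^{\times}\rightarrow A'^{\times}$ to a continuous character $\delta:K^{\times}\rightarrow A^{\times}$ is a torsor under $\mathrm{Hom}_{\mathrm{cont}}(K^{\times},1+I)\isom \mathrm{Hom}_{\mathrm{cont}}(K^{\times},E)$. On the other side, standard deformation theory for rank one $B$-pairs (to be developed in \S3) identifies the set of isomorphism classes of $A$-$B$-pair lifts of $W(\delta')$ with a torsor under $\mathrm{H}^{1}(G_K,\mathrm{End}(W(\bar{\delta})))\otimes_E I\isom \mathrm{H}^{1}(G_K,B_E)$, using that $W(\bar{\delta})$ has rank one so $\mathrm{End}(W(\bar{\delta}))\isom B_E$. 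By Proposition \ref{i}(3) applied to the trivial character, both torsor groups have dimension $[K:\mathbb{Q}_p]+1$ over $E$. The construction $\delta\mapsto W(\delta)$ is compatible with these torsor structures and induces an $E$-linear tangent map
\[
\tau:\mathrm{Hom}_{\mathrm{cont}}(K^{\times},E)\rightarrow \mathrm{H}^{1}(G_K,B_E).
\]

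The main obstacle is showing that $\tau$ is an isomorphism; once this is in hand, injectivity of $\delta\mapsto W(\delta)$ at level $A$ follows by applying the torsor comparison to $\eta=\delta_1\delta_2^{-1}$ (which must then be trivial), and surjectivity follows by adjusting an arbitrary lift of $\delta'$ by a suitable element of $\mathrm{Hom}_{\mathrm{cont}}(K^{\times},E)$ to match any prescribed $A$-$B$-pair lift of $W(\delta')$. Since the source and target of $\tau$ have equal finite dimension, it suffices to prove injectivity, or equivalently surjectivity. A natural approach is to fix the explicit basis $\{\log\circ\sigma\}_{\sigma\in\mathcal{P}}\cup\{v_p\circ N_{K/\mathbb{Q}_p}\}$ of $\mathrm{Hom}_{\mathrm{cont}}(K^{\times},E)$ and to identify the corresponding image $\tau(\eta)=[W(\mathbf{1}+\varepsilon\eta)]\in\mathrm{H}^{1}(G_K,B_E)$ for each basis vector, by unwinding the construction of $W(\delta)$ through local class field theory and the filtered $(\varphi,G_{L/K})$-module dictionary of Theorem \ref{e}: the characters $\log\circ\sigma$ for $\sigma\in\mathcal{P}$ contribute the de Rham/Hodge-Tate directions of $\mathrm{H}^{1}(G_K,B_E)$, while $v_p\circ N_{K/\mathbb{Q}_p}$ contributes the crystalline Frobenius slope direction, and comparison with the calculation underlying Proposition \ref{i}(3) confirms that these classes span $\mathrm{H}^{1}(G_K,B_E)$.
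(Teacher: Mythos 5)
Your approach is genuinely different from the paper's. The paper proves surjectivity directly via slope theory: given a rank one $A$-$B$-pair $W$, its reduction $W\otimes_A E$ is pure of some slope by [Na09, Lemma 1.42], so $W$ itself is pure of that slope by [Ke08, Theorem 1.6.6]; twisting by the dual of $W(D_{\pi_K^n})\otimes_E A$ makes it slope zero, hence \'etale, hence of the form $W(A(\tilde\delta'))$, from which one assembles $\delta$. You instead run an induction on $\mathrm{length}(A)$, reducing everything to a comparison of the torsor structures on the two sides of the correspondence along a small extension, encoded in a tangent map $\tau:\mathrm{Hom}_{\mathrm{cont}}(K^{\times},E)\rightarrow\mathrm{H}^{1}(G_K,B_E)$ between spaces of equal dimension $[K:\mathbb{Q}_p]+1$. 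Both strategies are legitimate, and yours is an illuminating reframing (it explains \emph{why} the two sides match dimension-for-dimension via Proposition~\ref{i}), but the paper's route is self-contained, whereas yours forward-references the deformation theory of \S 3 (torsor structure on lifts, which is not established before this point in the text, though the ingredients it would need do not circularly depend on Proposition~\ref{5}).

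The genuine gap in your argument is that the decisive step --- that $\tau$ is an isomorphism --- is asserted rather than proved. Everything in the inductive framework reduces cleanly to this one claim, and your closing paragraph only sketches ``a natural approach'' (pick the basis $\{\log\circ\sigma\}_{\sigma}\cup\{v_p\circ N_{K/\mathbb{Q}_p}\}$, compute images, compare with Proposition~\ref{i}(3)). No computation is actually carried out, and showing that those $[K:\mathbb{Q}_p]+1$ classes in $\mathrm{H}^{1}(G_K,B_E)$ are linearly independent is precisely as hard as the statement you are trying to establish. A cleaner way to close the gap, staying within your own framework, is to prove injectivity of $\tau$ directly by unwinding the $A=E[\varepsilon]$ construction: for $\eta\in\mathrm{Hom}_{\mathrm{cont}}(K^{\times},E)$ the character $1+\varepsilon\eta$ has $\bar u=1$, so $D_{\bar u}$ is the unit object and $W(1+\varepsilon\eta)=W(E[\varepsilon](\tilde\delta_0))$ is \'etale, where $\tilde\delta_0:G_K^{\mathrm{ab}}\rightarrow E[\varepsilon]^{\times}$ corresponds to $1+\varepsilon\eta$ by local class field theory; this $B$-pair is the trivial deformation of $B_E$ if and only if $\tilde\delta_0$ is trivial, i.e.\ if and only if $\eta=0$. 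That argument, or something equivalent, needs to be written out before the dimension count does any work. As a smaller point, the surjectivity half of your inductive step tacitly uses that a lift of $\delta'$ to a continuous character of $K^{\times}$ valued in $A^{\times}$ always exists; this is true (lift $\delta'(\pi_K)$ arbitrarily and lift on $\mathcal{O}_K^{\times}$ using that the obstruction vanishes for a $p$-adic target vector space) but should be stated.
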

 \begin{proof}
 The independence of the choice of uniformizer and the injection is proved in the same 
 way as in the proof of Theorem 1.45 of [Na09]. We prove the surjection. Let $W$ be a rank one $A$-$B$-pair. Then 
 as an $E$-$B$-pair, $W$ is a successive extension of rank one $E$-$B$-pair $W\otimes_A E$.
 $W\otimes_AE$ is pure of slope $\frac{n}{fe_E}$ for some $n\in \mathbb{Z}$ by Lemma 1.42 of [Na09]. 
 So, by Theorem 1.6.6 of [Ke08], $W$ is also pure of slope $\frac{n}{fe_E}$.  
 We define the rank one $E$-filtered $\varphi$-module $D_{\pi_K^n}:=K_0\otimes_{\mathbb{Q}_p}Ee_{\pi_K^n}$ 
 in the same way as in $D_{\bar{u}}$. Then $W\otimes (W(D_{\pi_K^n}) \otimes_E A)^{\vee}$ is pure of slope zero by Lemma 
 1.34 of [Na09]. So there exists $\tilde{\delta}':G_K^{\mathrm{ab}}\rightarrow A^{\times}$ such that 
 $W\otimes (W(D_{\pi^n})\otimes_E A)^{\vee}\isom W(A(\tilde{\delta}'))$. If we put $\delta':=\tilde{\delta}'\circ\mathrm{rec}_K:
 K^{\times}\rightarrow A^{\times}$ and put $\delta:K^{\times}\rightarrow A^{\times}$ such that $\delta|_{\mathcal{O}_K^{\times}}
 :=\delta'|_{\mathcal{O}_K^{\times}}$ and $\delta(\pi_K):=\delta'(\pi_K)\pi_K^n$, then we have an isomorphism 
 $W\isom W(\delta)$ which can be easily seen from the construction of $W(\delta)$.

 \end{proof}
 By local class field theory, we have a canonical bijection $\delta\mapsto A(\tilde{\delta})$ from  the set of unitary continuous homomorphisms 
 from $K^{\times}$ to $A^{\times}$ (where unitary means that the reduction of the image of $\delta$ is contained in $\mathcal{O}_E^{\times}$) 
  to the set of isomorphism class of rank one $A$-representations of $G_K$, where $\tilde{\delta}:G_K^{ab}\rightarrow 
  A^{\times}$ is the continuous homomorphism such that $\delta=\tilde{\delta}\circ\mathrm{rec}_K$.
  By the definition of $W(\delta)$  and by the above proof, it is easy to see that there exists an isomorphism 
  $W(\delta)\isom W(A(\tilde{\delta}))$ for any unitary homomorphism $\delta:K^{\times}\rightarrow A^{\times}$.
 Moreover, it is easy to see that for any continuous homomorphisms $\delta_1,\delta_2: K^{\times}
 \rightarrow A^{\times}$ we have isomorphisms $W(\delta_1)\otimes W(\delta_2)\isom W(\delta_1\delta_2)$ and $W(\delta_1)^{\vee}
 \isom W(\delta_1^{-1})$.

\section{Deformations of trianguline $B$-pairs}

In this section, we develop the deformation theory of $B$-pairs, in particular 
we study  trianguline deformation functors for trianguline $B$-pairs,  which 
are the generalization of the deformation theory of usual $p$-adic Galois 
representations and the generalization of the theory of trianguline deformation of 
[Bel-Ch] in the case of $K=\mathbb{Q}_p$ by using 
$(\varphi,\Gamma)$-modules. We prove the pro-representabilities and formally smoothnesses
of these deformation functors and calculate their dimensions by using the calculations of 
Galois cohomology of $B$-pairs in the last section.

\subsection{Deformations of $E$-$B$-pairs}

In this subsection, we study the usual deformation theory for $E$-$B$-pairs,, which 
are the generalization of the usual (homomorphismistic zero) deformation theory for 
$E$-representations.

\begin{defn}\label{6}
Let $A$ be an object in $\mathcal{C}_E$, $W$ be an $E$-$B$-pair.
Then we say that a pair $(W_A,\iota)$ is a deformation of $W$ over $A$ 
if $W_A$ is an $A$-$B$-pair and $\iota: W_A\otimes_AE\isom W$
is an isomorphism of $E$-$B$-pairs. Let $(W_A,\iota)$, $(W'_A,\iota')$ be 
two deformations of $W$ over $A$. Then we say that $(W_A,\iota)$ and $(W'_A,\iota')$ are 
equivalent if there exists an isomorphism $f:W_A\isom W'_A$ of $A$-$B$-pairs which 
satisfies $\iota=\iota'\circ\bar{f}$, where $\bar{f}:W_A\otimes_A E\isom W'_A\otimes_AE$ 
is the reduction of $f$.
\end{defn}
\begin{defn} \label{7}
Let $W$ be an $E$-$B$-pair. Then we define the deformation functor 
$D_W$ from the category $\mathcal{C}_E$ to the category of sets by defining, for any 
$A\in\mathcal{C}_E$, 
$D_W(A):=\{$ equivalent classes $(W_A,\iota)$ of deformations of $W$ over $A$ $\}$.
\end{defn}
We simply write $W_A$ if there is no risk of confusing about $\iota$.

Next, we prove the pro-representability 
of the functor $D_W$ under suitable conditions.
For this, we recall Schlessinger's criterion for pro-representability of functors from 
$\mathcal{C}_E$ to the category of sets. 
We call a morphism $f:A'\rightarrow A$ in $\mathcal{C}_E$ a small extension if it is surjective and 
the kernel $\mathrm{Ker}(f)=(t)$ is generated by a nonzero single element $t\in A'$ 
and $\mathrm{Ker}(f)\cdot m_{A'}=0$. $E[\varepsilon]$ is the ring of dual number over $E$,
i.e. defined by $E[\varepsilon]:=E[X]/(X^2)$.

\begin{thm}\label{8}
Let $F$ be a functor from $\mathcal{C}_E$ to the category of sets such that 
$F(E)$ is a single point. For morphisms $A'\rightarrow A$, $A"\rightarrow A$ in 
$\mathcal{C}_A$, consider the natural map 
\begin{itemize}
\item[(1)] $F(A'\times_{A}A")\rightarrow F(A')\times_{F(A)}F(A")$.
\end{itemize}
Then $F$ is pro-representable if and only if $F$ satisfies properties $(H_1)$,
$(H_2)$, $(H_3)$, $(H_4)$ below:
\begin{itemize}
\item[$(H_1)$] $(1)$ is surjective if $A"\rightarrow A$ is surjective.
\item[$(H_2)$] $(1)$ is bijective when $A=E$ and $A"=E[\varepsilon]$.
\item[$(H_3)$] $\mathrm{dim}_E(t_F)<\infty$ $($ where $t_F:=F(E[\varepsilon])$ and, under 
the condition $(H_2)$, it is known that $t_F$ has a natural $E$-vector space structure$)$.
\item[$(H_4)$] $(1)$ is  bijective if $A'=A"$ and $A'\rightarrow A$ is a small extension.
\end{itemize}
\end{thm}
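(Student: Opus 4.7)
The approach is the standard one from Schlessinger's original paper: the ``only if'' direction is a formal consequence of the universal property of $\mathrm{Hom}$, while the substance lies in the ``if'' direction, which requires an inductive construction of the pro-representing ring.

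For the ``only if'' direction, suppose $F \cong h_R$ where $h_R(A) := \mathrm{Hom}_{\mathrm{cont}}(R, A)$ for a complete Noetherian local $E$-algebra $R$ with residue field $E$. The functor $\mathrm{Hom}(R, -)$ sends fiber products in $\mathcal{C}_E$ to fiber products of sets in the strongest form (bijection), so $(H_1), (H_2), (H_4)$ all hold at once; and $t_F$ identifies canonically with $\mathrm{Hom}_E(m_R/(m_R^2 + m_E R), E)$, which is finite-dimensional because $R$ is Noetherian, yielding $(H_3)$.

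For the ``if'' direction, the plan is to build $R$ step by step. First, use $(H_1)$ and $(H_2)$ to equip $t_F = F(E[\varepsilon])$ with a canonical $E$-vector space structure, exploiting the $E$-algebra map $E[\varepsilon] \times_E E[\varepsilon] \to E[\varepsilon]$ that sends each copy of $\varepsilon$ to $\varepsilon$ in order to realize addition; scalar multiplication comes similarly from the endomorphisms $\varepsilon \mapsto c\varepsilon$. Let $d := \dim_E t_F$, finite by $(H_3)$, and set $S := E[\![X_1, \ldots, X_d]\!]$. Then construct inductively a decreasing chain of ideals $m_S^2 \supseteq J_1 \supseteq J_2 \supseteq \cdots$ with $m_S J_q \subseteq J_{q+1}$, together with compatible elements $\xi_q \in F(R_q)$ for $R_q := S/(J_q + m_S^{q+1})$, by choosing $J_{q+1}$ at each stage to be minimal (as an ideal modulo $m_S^{q+2}$) among those for which $\xi_q$ admits a lift. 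The existence of such a minimum rests on $(H_1)$: it guarantees that the family of ideals permitting a lift of $\xi_q$ is closed under finite intersection, and the Artinian property of $S/m_S^{q+2}$ then yields a minimal element. Setting $R := \varprojlim_q R_q$ and $\xi := \varprojlim \xi_q \in \hat{F}(R)$ produces a formally smooth (versal) morphism $h_R \to F$.

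The remaining step, and the reason $(H_4)$ is essential beyond the hull construction, is to upgrade versality to universality. Given $f, g \colon R \to A$ inducing the same element of $F(A)$, one argues by induction on $\dim_E A$ that $f = g$: choose a small extension $A \twoheadrightarrow A/(t)$ over which $f$ and $g$ already agree, and apply $(H_4)$ to the fiber-product map $F(A \times_{A/(t)} A) \to F(A) \times_{F(A/(t))} F(A)$ to identify $(f,g)$ with $(f,f)$ inside $F(A \times_{A/(t)} A)$, whence $f = g$ by composing with the second projection. The main obstacle throughout is the minimality step in the third paragraph—the closure-under-intersection property that makes the inductive descent terminate—and it is precisely there that the full surjectivity in $(H_1)$, rather than mere non-emptiness of the comparison map, is used decisively.
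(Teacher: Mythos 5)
Your outline follows the standard route of [Sch68] (which is all the paper itself does: it cites [Sch68] and \S~18 of [Ma97] without proof), and the ``only if'' direction and the overall architecture of the ``if'' direction are fine. But two of the genuinely delicate steps are either missing or argued incorrectly. First, after building $R=\varprojlim_q S/J_q$ you simply assert that $h_R\to F$ is versal (formally smooth); this is the heart of the hull construction and is precisely where the minimality of $J_{q+1}$ gets used, via a lifting argument combining $(H_1)$ and $(H_2)$, so it cannot be left as an assertion. Relatedly, your justification of the minimality step is off: the existence of \emph{a} minimal $J_{q+1}$ follows from the descending chain condition on the finite-dimensional space $J_q/m_SJ_q$ alone, with no appeal to $(H_1)$; and the closure-under-intersection claim is not immediate as you state it, because the two lifts $\xi_J\in F(S/J)$ and $\xi_K\in F(S/K)$ of $\xi_q$ are only known to agree after pushing to $F(S/J_q)$, not in $F(S/(J+K))$, so one must first reduce to the case $J+K=J_q$ before $(H_1)$ applies to $S/(J\cap K)=S/J\times_{S/(J+K)}S/K$.

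Second, and more seriously, your final use of $(H_4)$ is circular. From $\alpha(f)=\alpha(g)=\xi$ you correctly conclude that $(f,g)$ and $(f,f)$ have the same image in $F(A\times_{A/(t)}A)$, but that is automatic and yields nothing: to deduce $(f,g)=(f,f)$ as elements of $h_R(A\times_{A/(t)}A)$ you would need injectivity of $h_R\to F$ on a ring of strictly greater length than $A$, which is exactly what the induction is trying to establish; and projecting the equality in $F$ to the second factor only returns $\xi=\xi$. The correct use of $(H_4)$ is via the torsor structure: the ring isomorphism $A\times_{A/(t)}A\cong A\times_E E[I]$ with $I=(t)$, together with $(H_1)$, $(H_2)$ and $(H_4)$, shows that $t_F\otimes_E I$ acts simply transitively on each nonempty fibre of $F(A)\to F(A/(t))$; the same holds for $h_R$, the map $h_R\to F$ is equivariant for these actions, and $t_{h_R}\to t_F$ is bijective because $R$ is a hull. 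Then $g=v\cdot f$ for a unique $v$, equivariance gives $v\cdot\xi=\xi$, simple transitivity kills the image of $v$ in $t_F\otimes_E I$, and bijectivity on tangent spaces gives $v=0$, hence $f=g$. With these two repairs your sketch becomes the proof in the cited source.
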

\begin{proof}
See [Sch68] or $\S 18$ of [Ma97].
\end{proof}

By using this criterion, we prove the pro-representability of 
$D_W$.

\begin{prop}\label{9}
Let $W$ be an $E$-$B$-pair.
If $\mathrm{End}_{G_K}(W)=E$, i.e. $W$ has only trivial morphism 
of  $E$-$B$-pairs, then $D_W$ is pro-representable by a complete 
noetherian local $E$-algebra $R_W$ with residue field $E$.
\end{prop}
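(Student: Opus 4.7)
The plan is to verify Schlessinger's criteria $(H_1)$--$(H_4)$ as formulated in Theorem \ref{8}. Since $D_W(E)$ consists of a single equivalence class (the trivial deformation), the basic setup applies. Throughout, for morphisms $A' \to A$ and $A'' \to A$ in $\mathcal{C}_E$, the key point is that the natural formation of fiber products is compatible with the structure of $B$-pairs: given deformations $W_{A'}$ of $W$ over $A'$ and $W_{A''}$ over $A''$ equipped with a fixed identification of their reductions over $A$, one defines $W_{A' \times_A A''}$ by taking the fiber products $W_{e,A'} \times_{W_{e,A}} W_{e,A''}$ and $W^+_{\mathrm{dR},A'} \times_{W^+_{\mathrm{dR},A}} W^+_{\mathrm{dR},A''}$ componentwise. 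One then checks that this pair satisfies the conditions of Definition \ref{1}; the flatness and freeness properties follow from Lemma \ref{2} applied to each deformation together with the compatibility of base change with fiber products of Artin local rings.

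With this construction in hand, conditions $(H_1)$ and $(H_2)$ follow essentially formally. For $(H_1)$, given a pair in $D_W(A') \times_{D_W(A)} D_W(A'')$ we just produce the fiber-product deformation as above; when $A'' \to A$ is surjective one verifies that the resulting $W$-module is indeed free over $B_e \otimes_{\mathbb{Q}_p} (A' \times_A A'')$ (and similarly for the de Rham lattice) by the usual Nakayama argument already used in Lemma \ref{2}. For $(H_2)$, i.e. the case $A = E$ and $A'' = E[\varepsilon]$, bijectivity is then immediate because both sides classify the same data (an extension of $W$ by $W$ in the $e$ and $\mathrm{dR}^+$ parts).

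For $(H_3)$, I would identify the tangent space $t_{D_W} := D_W(E[\varepsilon])$ with $\mathrm{Ext}^1_{B\text{-pairs}}(W,W)$ in the standard way: any deformation $W_{E[\varepsilon]}$ fits into a short exact sequence $0 \to W \to W_{E[\varepsilon]} \to W \to 0$ obtained by multiplying by $\varepsilon$, and conversely every such extension yields a deformation. Since $\mathrm{Ext}^1(W,W) \cong H^1(G_K, W^{\vee}\otimes W)$ (where $W^{\vee}\otimes W$ is an $E$-$B$-pair by Definition \ref{4}), and $H^1$ is finite-dimensional over $E$ by Theorem \ref{h}(1), we obtain $\dim_E t_{D_W} < \infty$.

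The main step, and where the hypothesis $\mathrm{End}_{G_K}(W) = E$ enters, is $(H_4)$. Surjectivity of the map $D_W(A' \times_A A') \to D_W(A') \times_{D_W(A)} D_W(A')$ for a small extension $A' \to A$ already follows from $(H_1)$; the issue is injectivity, which amounts to showing that the automorphism map $\mathrm{Aut}(W_{A'}) \to \mathrm{Aut}(W_A)$ is surjective for every deformation $W_{A'}$. The strategy is to prove by induction on $\dim_E A$ that $\mathrm{End}_{G_K}(W_A) = A \cdot \mathrm{id}$ for every deformation $W_A$: at each step in a small extension $A' \to A$ with kernel $(t)$, the obstruction to lifting an endomorphism lies in $\mathrm{End}_{G_K}(W) \otimes_E (t) \cong E$, and one uses $\mathrm{End}_{G_K}(W) = E$ to conclude that all endomorphisms of $W_A$ arise by scalar multiplication. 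Consequently $\mathrm{Aut}(W_{A'}) = (A')^{\times} \cdot \mathrm{id}$ surjects onto $\mathrm{Aut}(W_A) = A^{\times} \cdot \mathrm{id}$ since units lift in surjections of Artin local rings, which yields $(H_4)$. I expect this inductive automorphism analysis to be the subtlest part, as it requires care with the semilinear $G_K$-action on both components of the $B$-pair; everything else is formal manipulation of fiber products together with the cohomological finiteness from Theorem \ref{h}.
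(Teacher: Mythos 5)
Your proposal is correct and follows essentially the same route as the paper: verify Schlessinger's criteria via an explicit fiber-product construction of deformations for $(H_1)$, identify $D_W(E[\varepsilon])$ with $\mathrm{H}^1(G_K,\mathrm{ad}(W))$ for $(H_3)$, and reduce $(H_2)$ and $(H_4)$ to the key lemma that $\mathrm{End}_{G_K}(W_A)=A$ for every deformation, proved by induction on the length of $A$ using $\mathrm{End}_{G_K}(W)=E$, so that automorphisms lift along small extensions. The paper's Lemma \ref{11} establishes the endomorphism statement by a length count on the left-exact sequence rather than an obstruction argument, but this is only a difference of phrasing.
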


For proving this proposition, we first prove some lemmas.

\begin{lemma}\label{10}
Let $\mathrm{ad}(W):=\mathrm{Hom}(W,W)$ be the internal endomorphism of $W$.
Then there exists an isomorphism of $E$-vector spaces
$D_W(E[\varepsilon])\isom \mathrm{H}^1(G_K,\mathrm{ad}(W))$.
\end{lemma}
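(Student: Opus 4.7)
The plan is to produce mutually inverse $E$-linear maps between $D_W(E[\varepsilon])$ and $H^1(G_K,\mathrm{ad}(W))$ using the fact, recalled in the excerpt, that $H^1(G_K,W')=\mathrm{Ext}^1(B_E,W')$ for any $E$-$B$-pair $W'$, where $\mathrm{Ext}^1$ is taken in the category of $E$-$B$-pairs. Applying this with $W'=\mathrm{ad}(W)=\mathrm{Hom}(W,W)$ and using the tensor–hom adjunction gives $H^1(G_K,\mathrm{ad}(W))=\mathrm{Ext}^1(W,W)$, so the target is precisely the group of extension classes $0\to W\to W'\to W\to 0$ of $E$-$B$-pairs.

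First, I would define the map $D_W(E[\varepsilon])\to\mathrm{Ext}^1(W,W)$. Given $(W_{E[\varepsilon]},\iota)\in D_W(E[\varepsilon])$, multiplication by $\varepsilon$ gives a morphism $\varepsilon\colon W_{E[\varepsilon]}\to W_{E[\varepsilon]}$ whose image is $\varepsilon W_{E[\varepsilon]}$ and whose kernel, by the flatness condition in Definition \ref{1} together with Lemma \ref{2} (which upgrades flatness to freeness), is also $\varepsilon W_{E[\varepsilon]}$. Thus the sequence $0\to \varepsilon W_{E[\varepsilon]}\to W_{E[\varepsilon]}\to W_{E[\varepsilon]}/\varepsilon W_{E[\varepsilon]}\to 0$ is a short exact sequence of $E$-$B$-pairs, and the isomorphism $\iota$ together with the tautological identification $W_{E[\varepsilon]}/\varepsilon W_{E[\varepsilon]}\cong \varepsilon W_{E[\varepsilon]}$ (via $\varepsilon$) identify both outer terms with $W$, producing the desired extension class. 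Equivalent deformations give isomorphic extensions, so the assignment is well-defined.

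For the inverse, given an extension $0\to W\to W'\to W\to 0$ of $E$-$B$-pairs, I would define an $E[\varepsilon]$-module structure on $W'$ by letting $\varepsilon$ act as the composite $W'\twoheadrightarrow W\hookrightarrow W'$; clearly $\varepsilon^2=0$ and $\varepsilon$ is $G_K$-equivariant on both $W'_e$ and $W'^+_{\mathrm{dR}}$. To verify that $W'$ is then an $E[\varepsilon]$-$B$-pair in the sense of Definition \ref{1}, I would check $E[\varepsilon]$-flatness of $W'_e$ and of $W'^+_{\mathrm{dR}}/tW'^+_{\mathrm{dR}}$ using the criterion that an $E[\varepsilon]$-module $M$ is flat iff the multiplication map $\varepsilon\colon M/\varepsilon M\to \varepsilon M$ is an isomorphism, which is exactly the content of the exact sequence $0\to W\to W'\to W\to 0$ on each component (using that $W'^+_{\mathrm{dR}}/tW'^+_{\mathrm{dR}}$ sits in a short exact sequence of $\mathbb{C}_p$-modules). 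The canonical isomorphism $\iota\colon W'\otimes_{E[\varepsilon]}E\isom W$ is the given surjection.

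The main obstacle will be verifying that these two constructions are inverse to each other and that the bijection is $E$-linear: the $E$-vector space structure on $D_W(E[\varepsilon])$ (guaranteed under $(H_2)$ by Schlessinger, which I would verify separately, or imposed ad hoc through the Baer-sum point of view) must be shown to coincide with the Yoneda/Baer-sum structure on $\mathrm{Ext}^1(W,W)$. This is a routine but somewhat tedious check: scalar multiplication by $\lambda\in E$ on $D_W(E[\varepsilon])$ is induced by the ring map $E[\varepsilon]\to E[\varepsilon],\ \varepsilon\mapsto \lambda\varepsilon$, and one must trace through the construction above to see that this matches scalar multiplication on extension classes. Once this compatibility is established, the bijectivity follows because both constructions are manifestly inverse on underlying data once the $E[\varepsilon]$-action is identified with the section-free extension structure.
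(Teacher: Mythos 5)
Your proof is correct, but it takes a genuinely different route from the paper's. The paper works directly with the cocycle description of $\mathrm{H}^1(G_K,\mathrm{ad}(W))$ coming from the complex $W_e\oplus W^+_{\mathrm{dR}}\rightarrow W_{\mathrm{dR}}$: it fixes splittings $W_{E[\varepsilon],e}\isom W_ee_1\oplus W_ee_2$ and $W^+_{E[\varepsilon],\mathrm{dR}}\isom W^+_{\mathrm{dR}}e_1\oplus W^+_{\mathrm{dR}}e'_2$ and reads off a triple $(c_e,c_{\mathrm{dR}},c)$ satisfying $c_e(g)-c_{\mathrm{dR}}(g)=gc-c$, which is by definition a class in $\mathrm{H}^1(G_K,\mathrm{ad}(W))$. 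You instead route through the Yoneda description, using $\mathrm{H}^1(G_K,\mathrm{ad}(W))=\mathrm{Ext}^1(B_E,\mathrm{ad}(W))\isom\mathrm{Ext}^1(W,W)$ and the standard dictionary between square-zero deformations and self-extensions ($\varepsilon W_{E[\varepsilon]}$ as sub and quotient in one direction; $\varepsilon:=(\text{surjection})\circ(\text{injection})$ in the other, with flatness checked via the criterion that $\varepsilon\colon M/\varepsilon M\rightarrow\varepsilon M$ be an isomorphism). Both are legitimate: the identification $\mathrm{Ext}^1(W_2,W_1)\isom\mathrm{H}^1(G_K,W_2^\vee\otimes W_1)$ that you invoke via tensor--hom adjunction is used freely elsewhere in the paper (e.g.\ in the proof of Proposition \ref{17}), and your flatness and exactness checks on the $e$- and $\mathrm{dR}$-components are exactly what is needed for the extension to live in the category of $A$-$B$-pairs. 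What the paper's explicit cocycle construction buys is reusability: the same splittings are what make Lemma \ref{18} work (one sees directly that $c_e$ lands in $\mathrm{ad}_{\mathcal{T}}(W_e)$ for a trianguline deformation) and they prefigure the two-cocycle obstruction computation in Proposition \ref{12}; your approach is cleaner for this one lemma but would have to be redone or supplemented there. Finally, both your write-up and the paper's leave the $E$-linearity (matching the Schlessinger structure on $D_W(E[\varepsilon])$ with the Baer sum, via $E[\varepsilon]\times_E E[\varepsilon]$ and $\varepsilon\mapsto\lambda\varepsilon$) as a routine check, so you are not less rigorous than the original on that point.
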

\begin{proof}
Let $W_{E[\varepsilon]}:=(W_{E[\varepsilon],e}, W^+_{E[\varepsilon],\mathrm{dR}})$ 
be a deformation of $W$ over $E[\varepsilon]$. From this, we define an element 
in $\mathrm{H}^1(G_K,\mathrm{ad}(W))$ as follows. Because $\varepsilon W_{E[\varepsilon],e}\isom 
W_e$ and $W_{E[\varepsilon],e}/\varepsilon W_{E[\varepsilon],e} \isom W_e$ (where we put $W:=(W_e,W^+_{\mathrm{dR}})$),
we have a natural exact sequence of $B_e\otimes_{\mathbb{Q}_p}E[G_K]$-modules
\begin{equation*}
0\rightarrow W_e\rightarrow W_{E[\varepsilon],e}\rightarrow W_e\rightarrow 0
\end{equation*}
We fix an isomorphism of $B_e\otimes_{\mathbb{Q}_p}E$-modules 
$W_{E[\varepsilon],e}\isom W_e e_1\oplus W_ee_2$ such that first factor $W_e e_1$is equal to $\varepsilon W_{E[\varepsilon]}$ 
as $B_e\otimes_{\mathbb{Q}_p}E[G_K]$-module and that the above natural projection maps the second factor $W_e e_2$ to $W_e$ by $xe_2\mapsto x$ for any $x\in W_e$.  Then we define a continuous one cocycle $c_e: G_K\rightarrow \mathrm{Hom}_{B_e\otimes_{\mathbb{Q}_p}E}(W_e,W_e)$
 by $g(ye_2):=c_e(g)(gy)e_1+gye_2$ for  any $g\in G_K$ and $y\in W_e$.  
 For $W^+_{\mathrm{dR}}$,  we fix an isomorphism $W^+_{E[\varepsilon],\mathrm{dR}}\isom W^+_{\mathrm{dR}}e_1\oplus 
 W^+_{\mathrm{dR}}e'_2$ as in the case of $W_e$, then we define a one cocycle $c_{\mathrm{dR}}:G_K\rightarrow \mathrm{Hom}_{B^+_{\mathrm{dR}}\otimes_{\mathbb{Q}_p}E}(W^+_{\mathrm{dR}},W^+_{\mathrm{dR}})$ by  $g(ye'_2):=c_{\mathrm{dR}}(g)(gy)e_1+gye'_2$ for 
 any $g\in G_K$ and $y\in W^+_{\mathrm{dR}}$. Next, we define an element $c\in\mathrm{Hom}_{B_{\mathrm{dR}}\otimes_{\mathbb{Q}_p}E}
 (W_{\mathrm{dR}},W_{\mathrm{dR}})$ as follows. By tensoring $W_{E[\varepsilon],e}$ and $W^+_{E[\varepsilon],\mathrm{dR}}$ with 
 $B_{\mathrm{dR}}$ over $B_e$ or $B^+_{\mathrm{dR}}$, we have 
 an isomorphism $f:W_{\mathrm{dR}}e_1\oplus W_{\mathrm{dR}}e_2\isom W_{E[\varepsilon],\mathrm{dR}}\isom 
 W_{\mathrm{dR}}e_1\oplus W_{\mathrm{dR}}e'_2$ of $B_{\mathrm{dR}}\otimes_{\mathbb{Q}_p}E$-modules. 
 We define $c:W_{\mathrm{dR}}\rightarrow W_{\mathrm{dR}}$ by $f(ye_2):=c(y)e_1+ye'_2$ for any $y\in W_{\mathrm{dR}}$.
 Then, by definition, the triple ($c_e$, $c_{\mathrm{dR}}$, $c$) satisfies $c_e(g)-c_{\mathrm{dR}}(g)=gc-c$ in $\mathrm{Hom}_{B_{\mathrm{dR}}
 \otimes_{\mathbb{Q}_p}E}(W_{\mathrm{dR}},W_{\mathrm{dR}})$ for any $g\in G_K$, i.e. the triple $(c_e,c_{\mathrm{dR}},c)\in 
 \mathrm{H}^1(G_K,\mathrm{ad}(W))$ by the definition of Galois cohomology of $B$-pairs ($\S$ 2.1 of [Na09]). Then it is standard to check that this definition is independent of 
 the choice of fixed isomorphism $W_{E[\varepsilon],e}\isom W_ee_1\oplus W_ee_2$, etc, and it is easy to check that 
 this map defines an isomorphism $D_W(E[\varepsilon])\isom \mathrm{H}^1(G_K,\mathrm{ad}(W))$.

\end{proof}

\begin{lemma}\label{11}
Let $W_A$ be a deformation of $W$ over $A$.
If $\mathrm{End}_{G_K}(W)=E$, then $\mathrm{End}_{G_K}(W_A)=A$.
\end{lemma}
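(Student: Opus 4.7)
The plan is to proceed by induction on the length of $A$ as an $E$-module, using that every object of $\mathcal{C}_E$ admits a chain of surjections by small extensions terminating in $E$. The base case $A = E$ is precisely the hypothesis $\mathrm{End}_{G_K}(W) = E$. For the inductive step, fix a small extension $0 \to (t) \to A \twoheadrightarrow A' \to 0$ in $\mathcal{C}_E$ with $m_A \cdot (t) = 0$, so that $(t) \cong E$ as $A$-modules. Then $W_{A'} := W_A \otimes_A A'$ is a deformation of $W$ over $A'$, and by the inductive hypothesis $\mathrm{End}_{G_K}(W_{A'}) = A'$.

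Given $f \in \mathrm{End}_{G_K}(W_A)$, its reduction $\bar f \in \mathrm{End}_{G_K}(W_{A'})$ equals multiplication by some $\bar a \in A'$. Choose any lift $a \in A$ of $\bar a$ and set $h := f - a \cdot \mathrm{id}_{W_A}$. Then $h$ is an $A$-linear, $G_K$-equivariant endomorphism of $W_A$ whose reduction modulo $(t)$ vanishes, so each component of $h$ has image inside $(t)$ times the corresponding component of $W_A$. Because $m_A \cdot (t) = 0$, the map $h$ kills $m_A W_A$ and therefore factors through $W_A / m_A W_A \cong W$. By the $A$-flatness of $W_{A,e}$ and $W_{A,\mathrm{dR}}^+$ proved in Lemma $\ref{2}$, tensoring the inclusion $(t) \hookrightarrow A$ with $W_A$ yields an injection $(t) \otimes_A W_A \hookrightarrow W_A$ whose image equals $(t) W_A$; combined with the identification $(t) \cong E$ of $A$-modules, this gives $(t) W_A \cong W$ as $E$-$B$-pairs. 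Hence $h$ is encoded by an element of $\mathrm{End}_{G_K}(W) = E$, i.e.\ $h = c\, t \cdot \mathrm{id}_{W_A}$ for some $c \in E$, and therefore $f = (a + ct) \cdot \mathrm{id}_{W_A}$ lies in $A$.

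The only nontrivial bookkeeping is to verify at each step that the factorization through $W$, the identification $(t) W_A \cong W$, and the extraction of the scalar $c$ are carried out compatibly on the $B_e$-component and the $B_{\mathrm{dR}}^+$-component, and glue correctly under the natural maps into $W_{\mathrm{dR}}$. This is precisely where the $A$-flatness statements of Lemma $\ref{2}$ are essential, since without them the map $(t) \otimes_A W_A \to W_A$ need not be injective and the sub-object $(t) W_A$ would not inherit a clean $E$-$B$-pair structure isomorphic to $W$. I expect this componentwise verification to be the principal (and routine) obstacle; beyond it no deeper difficulty arises, and the same induction will yield pro-representability of $D_W$ via Schlessinger's criterion (Theorem $\ref{8}$) together with Lemma $\ref{10}$.
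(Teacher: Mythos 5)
Your proof is correct and follows essentially the same route as the paper's: induction on the length of $A$ over a small extension $A\rightarrow A'$ with kernel $(t)$, the key point in both being that ($A$-flatness of the components of $W_A$ identifies $(t)W_A\cong W$ and hence) the kernel of $\mathrm{End}_{G_K}(W_A)\rightarrow \mathrm{End}_{G_K}(W_{A'})$ with $(t)\otimes_E\mathrm{End}_{G_K}(W)\cong E$. The only cosmetic difference is that the paper concludes the inductive step by comparing lengths against the natural inclusion $A\subseteq \mathrm{End}_{G_K}(W_A)$, whereas you conclude by directly exhibiting each endomorphism as $(a+ct)\cdot\mathrm{id}_{W_A}$; both are valid.
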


\begin{proof}
We prove this lemma by induction on the length of $A$.
When $A=E$, this is trivial. We assume that this lemma is proved for length $n$ 
rings and assume $A$ is length $n+1$. We take a small extension 
$f:A\rightarrow A'$. Because $\mathrm{End}_{G_K}(W)=\mathrm{H}^0(G_K, 
W^{\vee}\otimes W)$, then we have the following short exact sequence,
\begin{equation*}
0\rightarrow \mathrm{Ker(f)}\otimes_E\mathrm{End}_{G_K}(W)
\rightarrow \mathrm{End}_{G_K}(W_A)\rightarrow 
\mathrm{End}_{G_K}(W_A\otimes_A A').
\end{equation*}
From this and induction hypothesis, we have 
$\mathrm{length}(\mathrm{End}_{G_K}(W_A))\leqq 
\mathrm{length}(\mathrm{End}_{G_K}\allowbreak (W_A\otimes_A A'))+
\mathrm{length}(\mathrm{Ker}(f)\otimes_E \mathrm{End}_{G_K}(W))
=\mathrm{length}(A')+1=\mathrm{length}(A)$. On the other hand, 
we have a natural inclusion $A\subseteq \mathrm{End}_{G_K}(W_A)$.
So, by comparing length, we have $A=\mathrm{End}_{G_K}(W_A)$.
\end{proof}

\begin{proof} (of proposition)
Let $W$ be a rank $n$ $E$-$B$-pair satisfying $\mathrm{End}_{G_K}(W)=E$. 
For this $W$, we check the conditions $(H_i)$ of Schlessinger's criterion.
First, by Lemma $\ref{10}$, we have $\mathrm{dim}_E(D_{W}(E[\varepsilon]))=
\mathrm{dim}_E(\mathrm{H}^1(G_K,\mathrm{ad}(W)))<\infty$, so $(H_3)$ is satisfied.
Next we check $(H_1)$. Let  $f:A'\rightarrow A$, $g:A"\rightarrow A$ be morphisms in 
$\mathcal{C}_E$ such that $g$ is surjective. Let $([W_{A'}],[W_{A"}])$ be an element in 
$D_W(A')\times_{D_W(A)}D_W(A")$. We take deformations $W_{A'}:=(W_{A' e},W^{ +}_{A' \mathrm{dR}})
$, $W_{A"}:=(W_{A" e},W^{+}_{A" \mathrm{dR}})$ over $A'$ and $A"$ contained in  equivalent classes 
$[W_{A'}]$ and $[W_{A"}]$ respectively. Then we have an isomorphism $h:W_{A'}\otimes_{A'}A\isom W_{A"}\otimes_{A"}A=:W_A
:=(W_{A e}, W^+_{A\mathrm{dR}})$ 
which defines an equivalent class in $D_W(A)$. We fix a basis $e_1,\cdots,e_n$ of $W_{A' e}$ as a $B_e\otimes_{\mathbb{Q}_p}A'$
-module and write $\bar{e}_1,\cdots,\bar{e}_n$ the basis of $W_{A' e}\otimes_{A'}A$ induced from $e_1,\cdots,e_n$.
Then, by the surjection of $g:A"\rightarrow A$ and by $A"$-flatness of $W_{A" e}$, we can take a basis 
$\tilde{e}_1,\cdots,\tilde{e}_n$ of $W_{A" e}$ such that the basis 
$\bar{\tilde{e}}_1,\cdots,\bar{\tilde{e}}_n$ of $W_{A" e}\otimes_{A"}A$ induced from $\tilde{e}_1,\cdots,\tilde{e}_n$ 
satisfies $h(\bar{e}_i)=\bar{\tilde{e}}_i$ for any $i$.  Then $W^{'"}_e:=W_{A' e}\times_{W_{A e}}W_{A" e}
:=\{(x,y)\in W_{A' e}\times W_{A" e}| h(\bar{x})=\bar{y}\}$ is a free $B_e\otimes_{\mathbb{Q}_p}(A'\times_A A")$-module
 with basis $(e_1,\tilde{e}_1), \cdots, (e_n,\tilde{e}_n)$. In the same way, we can define 
 $W^{'"+}_{\mathrm{dR}}:=W^{+}_{A' \mathrm{dR}}\times_{W^+_{A \mathrm{dR}}} W^{+}_{A" \mathrm{dR}}$,  
 which is a free $B^{+}_{\mathrm{dR}}\otimes_{\mathbb{Q}_p}(A'\times_A A")$-module. Then  
 $W_{A'}\times_{W_A} W_{A"}:=(W^{'"}_e,W^{'" +}_{\mathrm{dR}})$ is a $(A'\times_A A")$-$B$-pair which is a deformation 
 of $W$ over $A'\times_A A"$ such that the equivalent class $[W_{A'}\times_{W_A}W_{A"}]\in D_W(A'\times_A A")$ maps 
 $([W_{A'}], [W_{A"}])\in D_W(A')\times_{D_W(A)}D_W(A")$. So we have proved $(H_1)$.
 
 Finally, we prove the following. If $g:A"\rightarrow A$ is surjective, then the natural map 
 $D_W(A'\times_AA")\rightarrow D_W(A')\times_{D_W(A)}D_W(A")$ is bijective, which 
 proves the condition $(H_2)$ and $(H_4)$, so we can prove the pro-representability of $D_W$.
 Let $W^{'"}_1$, $W^{'"}_2$ be deformations of $W$ over $A'\times_A A"$ such 
 that $[W^{'"}_1\otimes_{A'\times_AA"}A']=[W^{'"}_2\otimes_{A'\times_{A}A"}A']$ in $D_W(A')$ and 
 $[W^{'"}_1\otimes_{A'\times_AA"}A"]=[W^{'"}_2\otimes_{A'\times_{A}A"}A"]$ in $D_W(A")$.
 Then we want to show $[W^{'"}_1]=[W^{'"}_2]$ in $D_W(A'\times_A A")$. 
 We put $W_{1 A'}:=W^{'"}_1\otimes_{A'\times_A A"}A'$, $W_{1 A"}:=W_1^{'"}\otimes_{A'\times_A A"}A"$
 , $W_{1 A}:=W_1^{'"}\otimes_{A'\times_A A"} A$, and same for $W_{2 A'}$, $W_{2 A"}$, $W_{2 A}$.
 Then we have natural isomorphisms $W^{'"}_1\isom W_{1 A'}\times_{W_{1 A}} W_{1 A"}$ and $W_2^{'"}
 \isom W_{2 A'}\times_{W_{2 A}} W_{2 A"}$ defined as in the last paragraph. Because 
 $[W_{1 A'}]=[W_{2 A'}]$ and $[W_{1 A"}]=[W_{2 A"}]$, we have isomorphisms 
 $h':W_{1 A'}\isom W_{2 A'}$ and $h":W_{1 A"}\isom W_{2 A"}$. By reduction of these isomorphisms, 
 we get an automorphism $\bar{h}'\circ \bar{h}^{" -1}:W_{2 A}\isom W_{1 A}\isom W_{2 A}$. 
 Then, by Lemma $\ref{11}$ and the surjection of $g:A^{" \times}\rightarrow A^{\times}$, we can find an automorphism 
 $\tilde{h}:W_{2 A"}\isom W_{2 A"}$ such that $\bar{\tilde{h}}=\bar{h}'\circ \bar{h}^{" -1}$. 
 Then, ifwe define a morphism $h^{'"}:W_{1 A'}\times_{W_{1 A}} W_{1 A"}\rightarrow 
 W_{2 A"}\times_{W_{2 A}} W_{2 A'}: (x,y)\mapsto (h_1(x), \tilde{h}\circ h_2(y))$, we can see that this is 
 well-defined and is isomorphism. So we finish to prove this proposition.
 \end{proof}
 
 \begin{prop}\label{12}
 Let $W:=(W_e, W^+_{\mathrm{dR}})$ be an $E$-$B$-pair of rank $n$.
 If $\mathrm{H}^2(G_K, \mathrm{ad}(W))\allowbreak =0$, then the functor 
 $D_W$ is formally smooth.
 \end{prop}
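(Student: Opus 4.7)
The plan is a standard obstruction-theoretic argument. Formal smoothness of $D_W$ amounts to requiring that, for every small extension $f\colon A'\twoheadrightarrow A$ in $\mathcal{C}_E$ with kernel $I=(t)\cong E$, the map $D_W(A')\to D_W(A)$ is surjective. Given a deformation $W_A=(W_{A,e},W^+_{A,\mathrm{dR}})$ of $W$ over $A$, I would build a naive lift to $A'$, package the failure of the $G_K$-action to be multiplicative as a $2$-cocycle in the complex computing $\mathrm{H}^{\ast}(G_K,\mathrm{ad}(W))$, and use the hypothesis to correct it.

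First, by Lemma~\ref{2}, $W_{A,e}$ is free over $B_e\otimes_{\mathbb{Q}_p}A$ and $W^+_{A,\mathrm{dR}}$ is free over $B^+_{\mathrm{dR}}\otimes_{\mathbb{Q}_p}A$. I would fix bases of both and lift them formally to free modules $\tilde{W}_e$ over $B_e\otimes_{\mathbb{Q}_p}A'$ and $\tilde{W}^+_{\mathrm{dR}}$ over $B^+_{\mathrm{dR}}\otimes_{\mathbb{Q}_p}A'$, setting $\tilde{W}_{\mathrm{dR}}:=B_{\mathrm{dR}}\otimes_{B_e}\tilde{W}_e$, and lifting the inclusion $W^+_{A,\mathrm{dR}}\hookrightarrow W_{A,\mathrm{dR}}$ to a $B_{\mathrm{dR}}\otimes_{\mathbb{Q}_p}A'$-linear injection $\tilde{W}^+_{\mathrm{dR}}[1/t]\hookrightarrow \tilde{W}_{\mathrm{dR}}$; this is possible because the reduction modulo $I$ is an isomorphism and both sides are free.

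Next, I would lift the continuous semi-linear $G_K$-actions on $W_{A,e}$ and $W^+_{A,\mathrm{dR}}$ set-theoretically to continuous maps $\tilde{\rho}_e\colon G_K\to \mathrm{Aut}(\tilde{W}_e)$ and $\tilde{\rho}_{\mathrm{dR}}\colon G_K\to \mathrm{Aut}(\tilde{W}^+_{\mathrm{dR}})$. The failure of multiplicativity yields a continuous $2$-cocycle $c_e(g,h):=\tilde{\rho}_e(gh)^{-1}\tilde{\rho}_e(g)\,g(\tilde{\rho}_e(h))-1$ valued in $I\cdot\mathrm{End}(\tilde{W}_e)\cong \mathrm{End}_{B_e\otimes_{\mathbb{Q}_p}E}(W_e)$, and similarly $c_{\mathrm{dR}}$ valued in $\mathrm{End}_{B^+_{\mathrm{dR}}\otimes_{\mathbb{Q}_p}E}(W^+_{\mathrm{dR}})$. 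The two resulting lifts of the action on $\tilde{W}_{\mathrm{dR}}$, one induced from $\tilde{W}_e$ and one from $\tilde{W}^+_{\mathrm{dR}}$ through the embedding, differ by a continuous $1$-cochain $c\in C^1(G_K,\mathrm{End}(W_{\mathrm{dR}}))$ with $dc=c_e-c_{\mathrm{dR}}$ in $C^2(G_K,\mathrm{End}(W_{\mathrm{dR}}))$, so the triple $(c_e,c_{\mathrm{dR}},c)$ is a $2$-cocycle for the complex $W_e\oplus W^+_{\mathrm{dR}}\to W_{\mathrm{dR}}$ applied to $\mathrm{ad}(W)$, in the sense of \S\,2.1 of [Na09] (exactly dual to the $1$-cocycle construction appearing in the proof of Lemma~\ref{10}).

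Since $\mathrm{H}^2(G_K,\mathrm{ad}(W))=0$ by hypothesis, this triple is a $2$-coboundary $d(b_e,b_{\mathrm{dR}},b)$; replacing $\tilde{\rho}_e$, $\tilde{\rho}_{\mathrm{dR}}$ and the embedding by their $b$-adjustments gives honest semi-linear $G_K$-actions on $\tilde{W}_e$ and $\tilde{W}^+_{\mathrm{dR}}$ that are compatible through the modified embedding, producing an $A'$-$B$-pair $W_{A'}$ with $W_{A'}\otimes_{A'}A\cong W_A$. The main obstacle will be the bookkeeping around the middle component $c$: verifying that the two naive lifts of the $G_K$-action to $\tilde{W}_{\mathrm{dR}}$, one from the $e$-side and one from the $\mathrm{dR}^+$-side, assemble into a cochain whose coboundary is precisely $c_e-c_{\mathrm{dR}}$, and that a single coboundary correction $(b_e,b_{\mathrm{dR}},b)$ simultaneously repairs multiplicativity on both components and the compatibility of the embedding. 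The $A'$-flatness of the corrected data follows automatically from freeness of the bases chosen at the outset.
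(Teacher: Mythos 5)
Your proposal is correct and follows essentially the same route as the paper: lift bases and the semi-linear actions via a set-theoretic section, record the obstruction as a triple $(c_e,c_{\mathrm{dR}},c)$ forming a $2$-cocycle for the mapping-cone complex computing $\mathrm{H}^{*}(G_K,\mathrm{ad}(W))$, and use the vanishing of $\mathrm{H}^2$ to correct the lift. The only cosmetic difference is that the paper encodes the comparison of the two lifts on $W_{\mathrm{dR}}$ by a matrix $P$ and its lift $\tilde P$ rather than by lifting the embedding directly, which is the same bookkeeping you anticipate.
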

 \begin{proof}
 Let $A'\rightarrow A$ be a small extension in $\mathcal{C}_E$,  
 we denote the kernel by $I\subseteq A'$.
 Let $W_A:=(W_{e,A}, W^+_{\mathrm{dR},A})$ be a deformation of $W$ 
 over $A$. Then it suffices to show that there exists an $A'$-$B$-pair $W_{A'}$ 
 such that $W_{A'}\otimes_{A'}A\isom W_A$. 
 We fix a basis of $W_{e,A}$ as a $B_e\otimes_{\mathbb{Q}_p}A$-module.
 By this choice, from the $G_K$-action on $W_{e,A}$, 
 we get a continuous one cocycle $\rho_e:G_K\rightarrow \mathrm{GL}_n(B_e\otimes_{\mathbb{Q}_p}A)$.
  In the same way, if we fix a basis of $W^+_{\mathrm{dR},A}$ as a $B^+_{\mathrm{dR}}\otimes_{\mathbb{Q}_p}A$-module, 
  we get a continuous cocycle $\rho_{\mathrm{dR}}:G_K\rightarrow \mathrm{GL}_n(B^+_{\mathrm{dR}}\otimes_{\mathbb{Q}_p}A)$.
   From the canonical isomorphism $W_{e,A}\otimes_{B_e}B_{\mathrm{dR}}\isom W^+_{\mathrm{dR},A}
   \otimes_{B^+_{\mathrm{dR}}}
   B_{\mathrm{dR}}$, we get a matrix $P\in\mathrm{GL}_n(B_{\mathrm{dR}}\otimes_{\mathbb{Q}_p}A)$ such that 
   $P\rho_e(g)g(P)^{-1}=\rho_{\mathrm{dR}}(g)$ for any $g\in G_K$. 
   We fix an $E$-linear section $s:A\rightarrow A'$ of $A'\rightarrow A$ and fix a lifting 
   $\tilde{P}\in \mathrm{GL}_n(B_{\mathrm{dR}}\otimes_{\mathbb{Q}_p}A')$ of $P$. From this, 
   we get  continuous liftings $\tilde{\rho}_e:=s\circ \rho_e:G_K\rightarrow \mathrm{GL}_n(B_e\otimes_{\mathbb{Q}_p}A')$ of $\rho_e$ 
   and $\tilde{\rho}_{\mathrm{dR}}:=s\circ \rho_{\mathrm{dR}}:G_K\rightarrow \mathrm{GL}_n(B^+_{\mathrm{dR}}\otimes_{\mathbb{Q}_p}A')$ of 
   $\rho_{\mathrm{dR}}$.
   By using these liftings, we define $c_e:G_K\times G_K\rightarrow I\otimes_E \mathrm{Hom}_{B_e\otimes_{\mathbb{Q}_p}E}(W_e, W_e)$ 
   by $\tilde{\rho}_e(g_1g_2)g_1(\tilde{\rho}_e(g_2))^{-1}\tilde{\rho}_e(g_1)^{-1}=1+c_{e}(g_1,g_2)\in 
   1+I \otimes_{A'}\mathrm{M}_n(B_e\otimes_{\mathbb{Q}_p}A')=1+I\otimes_{E}\mathrm{Hom}_{B_e\otimes_{\mathbb{Q}_p}E}
   (W_e,W_e)$ for any $g_1,g_2\in G_K$. In the same way, define $c_{\mathrm{dR}}:G_K\times G_K \rightarrow I\otimes_E\mathrm{Hom}_{B^+_{\mathrm{dR}}\otimes_{\mathbb{Q}_p}E}
   (W^+_{\mathrm{dR}}, W^+_{\mathrm{dR}})$ by $\tilde{\rho}_{\mathrm{dR}}(g_1g_2)g_1(\tilde{\rho}_{\mathrm{dR}}(g_2))^{-1}\tilde{\rho}_{\mathrm{dR}}(g_1)^{-1}
   =1+c_{\mathrm{dR}}(g_1,g_2)$.  We define $c:G_K\rightarrow I\otimes_E\mathrm{Hom}_{B_{\mathrm{dR}}\otimes_{\mathbb{Q}_p}E}(W_{\mathrm{dR}}, W_{\mathrm{dR}})$ 
   such that $\tilde{P}\tilde{\rho}_e(g)g(\tilde{P})^{-1}\tilde{\rho}_{\mathrm{dR}}(g)^{-1}=1 + c(g)\in 1+I\otimes_E\mathrm{Hom}_{B_{\mathrm{dR}}\otimes_{\mathbb{Q}_p}E}(W_{\mathrm{dR}}, W_{\mathrm{dR}})$. Then $c_e$ and $c_{\mathrm{dR}}$ are continuous two cocycles, i.e. satisfy $g_1c_{\ast}(g_2, g_3)-c_{\ast}(g_1g_2,g_3)+c_{\ast}(g_1, g_2g_3)
   -c_{\ast}(g_1,g_2)=0$ for any $g_1,g_2,g_3\in G_K$ ($\ast=e, \mathrm{dR}$). Moreover, we can check that $c_e$ and $c_{\mathrm{dR}}$ and $c$ satisfy 
   $c_e(g_1,g_2)-c_{\mathrm{dR}}(g_1,g_2)=g_1(c(g_2))-c(g_1g_2)+c(g_1)$ for any 
   $g_1,g_2,g_3\in G_K$ (note that the isomorphism $\mathrm{Hom}_{B_e\otimes_{\mathbb{Q}_p}E}(W_e, W_e)
   \otimes_{B_e}B_{\mathrm{dR}}\isom \mathrm{Hom}_{B^+_{\mathrm{dR}}\otimes_{\mathbb{Q}_p}E}(W^+_{\mathrm{dR}},W^+_{\mathrm{dR}})\otimes_{B^+_{\mathrm{dR}}}B_{\mathrm{dR}}$ is given by $c\mapsto \bar{P}^{-1}c\bar{P}$, where $\bar{P}\in \mathrm{GL}_n(B_{\mathrm{dR}}\otimes_{\mathbb{Q}_p}E)$
    is the reduction of $P\in \mathrm{GL}_n(B_{\mathrm{dR}}\otimes_{\mathbb{Q}_p}A)$). By definition of Galois cohomology 
    of $B$-pairs, these mean that $(c_e,c_{\mathrm{dR}}, c)$ defines an element $[(c_e,c_{\mathrm{dR}},c)]$in $I\otimes_E\mathrm{H}^2(G_K, \mathrm{ad}(W))$ . (Until here, we  don't use the condition $\mathrm{H}^2(G_K,\mathrm{ad}(W))=0$ and we can show, in the standard way, that $[(c_e,c_{\mathrm{dR}},c)]$ doesn't depend on the choice of $s$ or $\tilde{P}$, i.e. depends only on $W_A$.)
    Now, by the assumption, we have $\mathrm{H}^2(G_K,\mathrm{ad}(W))=0$, So, by definition, there exists $(f_e,f_{\mathrm{dR}}, f)
    $ such that $f_e:G_K\rightarrow I\otimes_{E}\mathrm{Hom}_{B_e\otimes_{\mathbb{Q}_p}E}(W_e, W_e)$ and 
    $f_{\mathrm{dR}}:G_K\rightarrow I\otimes_E\mathrm{Hom}_{B^+_{\mathrm{dR}}\otimes_{\mathbb{Q}_p}E}(W^+_{\mathrm{dR}}, W^+_{\mathrm{dR}})$ are continuous map and $f\in I\otimes_E\mathrm{Hom}_{B_{\mathrm{dR}}\otimes_{\mathbb{Q}_p}E}(W_{\mathrm{dR}}, W_{\mathrm{dR}})$ 
    and these satisfy $c_e(g_1,g_2)=g_1f_e(g_2)-f_e(g_1g_2)+f_e(g_1)$ and $c_{\mathrm{dR}}(g_1,g_2)=g_1f_{\mathrm{dR}}(g_2)
    -f_{\mathrm{dR}}(g_1g_2)+f_{\mathrm{dR}}(g_1)$ and $c(g_1)=f_{\mathrm{dR}}(g_1)-\bar{P}^{-1}f_{e}(g_1)\bar{P}+(g_1f-f)$ 
    for any $g_1,g_2\in G_K$.  
    Then we define new liftings $\rho'_{e}:G_K\rightarrow \mathrm{GL}_n(B_e\otimes_{\mathbb{Q}_p}A')$ by 
    $\rho'_e(g):=(1+f_e(g))\tilde{\rho}_e(g)$, $\rho'_{\mathrm{dR}}(g):G_K\rightarrow \mathrm{GL}_n(B^+_{\mathrm{dR}}\otimes_{\mathbb{Q}_p}A')$ by $\rho'_{\mathrm{dR}}(g):=(1+f_{\mathrm{dR}}(g))\tilde{\rho}_{\mathrm{dR}}(g)$ and 
    $P':=(1+f)\tilde{P}\in \mathrm{GL}_n(B_{\mathrm{dR}}\otimes_{\mathbb{Q}_p}A')$. Then we can check 
    that these satisfy $\rho'_e(g_1g_2)=\rho'_e(g_1)\rho'_{e}(g_2)$ and $\rho'_{\mathrm{dR}}(g_1g_2)
    =\rho'_{\mathrm{dR}}(g_1)\rho'_{\mathrm{dR}}(g_2)$ and $P'\rho'_e(g_1)g(P')^{-1}=\rho'_{\mathrm{dR}}(g_1)$ for 
    any $g_1,g_2\in G_K$.  By definition of $A'$-$B$-pair, this means that $(\rho'_e,\rho'_{\mathrm{dR}},P')$ defines 
    an $A'$-$B$-pair and ,by definition, this is a lift of $W_A$. We finish the proof of this proposition.
    \end{proof}
    
    \begin{corollary}\label{13}
    Let $W$ be an $E$-$B$-pair of rank $n$.
    If $\mathrm{End}_{G_K}(W)=E$ and $\mathrm{H}^2(G_K, \mathrm{ad}(W))=0$ 
    and if we denote $d:=[K:\mathbb{Q}_p]n^2 +1$, 
    then the functor $D_W$ is pro-representable by $R_W$ such that $R_W\isom E[[T_1,\cdots,T_d]]$.
    \end{corollary}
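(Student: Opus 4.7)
The plan is to assemble the preceding propositions and then compute the tangent space dimension via Galois cohomology.

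First, I would apply Proposition~\ref{9}: the hypothesis $\mathrm{End}_{G_K}(W)=E$ guarantees that $D_W$ is pro-representable by a complete noetherian local $E$-algebra $R_W$ with residue field $E$. Next, I would apply Proposition~\ref{12}: the hypothesis $\mathrm{H}^2(G_K,\mathrm{ad}(W))=0$ guarantees that $D_W$ is formally smooth.

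Given pro-representability together with formal smoothness, it is standard that $R_W$ must be a power series ring $E[[T_1,\ldots,T_d]]$ in $d:=\dim_E \mathfrak{m}_{R_W}/\mathfrak{m}_{R_W}^2=\dim_E t_{D_W}$ variables. Concretely, one chooses elements $t_1,\ldots,t_d\in \mathfrak{m}_{R_W}$ whose reductions form a basis of $\mathfrak{m}_{R_W}/\mathfrak{m}_{R_W}^2$, obtains a surjection $\phi:E[[T_1,\ldots,T_d]]\twoheadrightarrow R_W$ via $T_i\mapsto t_i$ (by completed Nakayama), and then uses formal smoothness along the small extensions $R_W/\mathfrak{m}_{R_W}^{n+1}\twoheadrightarrow R_W/\mathfrak{m}_{R_W}^n$ to produce a compatible system of sections modulo $\mathfrak{m}^n$, forcing $\phi$ to be an isomorphism.

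It remains to identify $d$ with $[K:\mathbb{Q}_p]n^2+1$. By Lemma~\ref{10}, $t_{D_W}=D_W(E[\varepsilon])\isom \mathrm{H}^1(G_K,\mathrm{ad}(W))$, so it suffices to compute $\dim_E \mathrm{H}^1(G_K,\mathrm{ad}(W))$. Since $\mathrm{ad}(W)=W^{\vee}\otimes W$ has rank $n^2$ as an $E$-$B$-pair, the Euler-Poincar\'e formula of Theorem~\ref{h}(2), applied to $\mathrm{ad}(W)$, gives
\[
\dim_E \mathrm{H}^0(G_K,\mathrm{ad}(W))-\dim_E \mathrm{H}^1(G_K,\mathrm{ad}(W))+\dim_E \mathrm{H}^2(G_K,\mathrm{ad}(W))=-[K:\mathbb{Q}_p]\,n^2.
\]
By hypothesis, $\mathrm{H}^0(G_K,\mathrm{ad}(W))=\mathrm{End}_{G_K}(W)=E$ is one-dimensional and $\mathrm{H}^2(G_K,\mathrm{ad}(W))=0$, so solving yields $\dim_E \mathrm{H}^1(G_K,\mathrm{ad}(W))=[K:\mathbb{Q}_p]n^2+1=d$, as required. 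I do not foresee any significant obstacle: every ingredient has already been prepared in the preceding results, and the remaining assembly is purely formal.
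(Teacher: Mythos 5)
Your proof is correct and follows essentially the same route as the paper: Proposition~\ref{9} for pro-representability, Proposition~\ref{12} for formal smoothness, and Lemma~\ref{10} together with Tate's Euler--Poincar\'e formula to compute the tangent space dimension. (You silently corrected the sign in Theorem~\ref{h}(2), which as printed reads $[K:\mathbb{Q}_p]\mathrm{rank}(W)$ rather than $-[K:\mathbb{Q}_p]\mathrm{rank}(W)$; the paper's own proof of this corollary also uses the corrected sign, so this is a typo in the statement of Theorem~\ref{h}.)
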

    \begin{proof}
    The existence and formally smoothness of $R_W$ follows from Proposition $\ref{9}$ and Proposition $\ref{12}$.
    For dimension, by Theorem $\ref{h}$ and Lemma $\ref{10}$, we have 
    $\mathrm{dim}_ED_W(E[\varepsilon])=\mathrm{dim}_E\mathrm{H}^1(G_K, \mathrm{ad}(W))=[K:\mathbb{Q}_p]n^2
    +\mathrm{dim}_E\mathrm{H}^0(G_K,\mathrm{ad}(W))+\mathrm{dim}_E\mathrm{H}^2(G_K,\mathrm{ad}(W))=[K:\mathbb{Q}_p]n^2
    +1$.
    \end{proof}
 
\subsection{Trianguline  deformations of trianguline $B$-pairs}

In this subsection, we define the trianguline deformation functor for 
split trianguline $E$-$B$-pairs and prove the pro-representability and formally smoothness 
under some conditions and calculate the dimension of the universal 
deformation ring of  this functor. In $\S$ 2 of [Bel-Ch 09], Bella\"iche-Chenevier 
proved all these in the case 
of $K=\mathbb{Q}_p$ by using $(\varphi,\Gamma)$-modules over the Robba ring and by using Colmez's 
theory of trianguline representations for $K=\mathbb{Q}_p$ ([Co08]). 
We generalize their theory by using 
$B$-pairs and the theory of trianguline representations for any $p$-adic fields ([Na09] or $\S$ 2). 

First, we define the notion of split  trianguline $A$-$B$-pairs  as follows.
\begin{defn}
Let $W$ be a rank $n$ $A$-$B$-pairs. Then we say that $W$ is a split trianguline 
$A$-$B$-pair if there exists a sequence of sub $A$-$B$-pairs 
$\mathcal{T}: 0=W_0\subseteq W_1\subseteq W_2\subseteq\cdots \subseteq W_{n-1}
\subseteq W_n=W$ such that  $W_i$ is saturated in $W_{i+1}$ and the 
quotient $W_{i+1}/W_i$ is a rank one $A$-$B$-pair for any $0\leqq i\leqq 
n-1$.

By Proposition $\ref{5}$, there exist continuous homomorphisms $\delta_i:K^{\times}\rightarrow A^{\times}$ 
such that $W_i/W_{i-1}\isom W(\delta_i)$ for any $1\leqq i\leqq n$.
We say that $\{\delta_i\}_{i=1}^n$ is the parameter of triangulation 
$\mathcal{T}$.
\end{defn}

Next, we define the trianguline deformation functor. 
Let $W$ be a rank $n$ split trianguline $E$-$B$-pair. We fix a triangulation 
$\mathcal{T}: 0\subseteq W_1\subseteq\cdots\subseteq W_{n-1}\subseteq 
W_n=W$ of $W$. Under this condition, we define the trianguline deformation 
as follows.

\begin{defn}
Let $A$ be an object in $\mathcal{C}_E$.
We say that $(W_A,\iota, \mathcal{T}_A)$ is a trianguline deformation of $(W,\mathcal{T})$ over $A$
if $(W_A,\iota)$ is a deformation of $W$ over $A$ and $W_A$ is a split trianguline $A$-$B$-pair with 
a triangulation $\mathcal{T}_A:0\subseteq W_{1,A}\subseteq \cdots \subseteq W_{n,A}=W_A$ 
 such that  $\iota(W_{i,A}\otimes_A E)=W_i$ for any $1\leqq i\leqq n$.
Let $(W_A,\iota,\mathcal{T}_A)$ and $(W'_A,\iota',\mathcal{T}'_A)$ be two trianguline 
deformations of $(W,\mathcal{T})$ over $A$. Then we say that $(W_A,\iota,\mathcal{T}_A)$ and 
$(W'_A,\iota',\mathcal{T}'_A)$ are equivalent if there exists an isomorphism of $A$-$B$-pairs 
$f:W_A\isom W'_A$ satisfying $\iota=\iota'\circ \bar{f}$ and $f(W_{i,A})=W'_{i,A}$ for 
any $1\leqq i\leqq n$.
\end{defn}
\begin{defn}
Let $W$ be a split trianguline $E$-$B$-pair with a triangulation $\mathcal{T}$. 
Then we define the trianguline deformation functor $D_{W,\mathcal{T}}$ 
from the category $\mathcal{C}_E$ to the category of sets by defining, for any $A\in\mathcal{C}_E$, 
$D_{W,\mathcal{T}}(A):=\{$ equivalent classes $(W_A,\iota,\mathcal{T}_A)$ of trianguline 
deformation of $(W,\mathcal{T})$ over $A$ $\}$.
\end{defn}

By definition, we have a natural map of functors from $D_{W,\mathcal{T}}$ to $D_{W}$ by forgetting 
triangulation, i.e. by defining $D_{W,\mathcal{T}}(A)\rightarrow D_{W}(A): [(W_A,\iota,\mathcal{T}_A)]
\mapsto [(W_A,\iota)]$. In general, by this map, $D_{W,\mathcal{T}}$ is not a sub functor of 
$D_W$, i.e. a deformation $W_A$ can have many liftings of triangulation of $\mathcal{T}$.
So we give a sufficient condition for $D_{W,\mathcal{T}}$ to be a subfunctor of $D_W$.
Let $\{\delta_i\}_{i=1}^n$ be the parameter of triangulation $\mathcal{T}$.
\begin{lemma}\label{14}
If, for any $1\leqq i<j\leqq n$, we have $\delta_j/\delta_i\not= \prod_{\sigma\in\mathcal{P}}\sigma(x)^{k_{\sigma}}$ for 
any $\{k_{\sigma}\}_{\sigma\in\mathcal{P}}\in \prod_{\sigma\in\mathcal{P}} \mathbb{Z}_{\leqq 0}$, then 
the functor $D_{W,\mathcal{T}}$ is a sub functor of $D_W$.
\end{lemma}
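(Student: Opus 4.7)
The plan is to show that the forgetful map of sets $D_{W,\mathcal{T}}(A)\to D_W(A)$ is injective for every $A\in\mathcal{C}_E$, which is equivalent to the subfunctor assertion. I will in fact establish the stronger claim that, given any deformation $(W_A,\iota)$ of $W$ over $A$, there is at most one triangulation $\mathcal{T}_A$ of $W_A$ lifting $\mathcal{T}$, considered literally as a filtration on $W_A$ (not merely up to isomorphism). Once this uniqueness is in hand, given any equivalence $f\colon W_A\isom W'_A$ in $D_W$ between two trianguline deformations $(W_A,\iota,\mathcal{T}_A)$ and $(W'_A,\iota',\mathcal{T}'_A)$, the transported filtration $f(\mathcal{T}_A)$ is another lift of $\mathcal{T}$ on $W'_A$ and must coincide with $\mathcal{T}'_A$, so $f$ is automatically an equivalence of trianguline deformations.

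I prove the uniqueness by induction on the rank $n$ of $W$, the case $n=1$ being trivial. For the inductive step it is enough to show that the bottom stage $W_{1,A}\subseteq W_A$ lifting $W_1\subseteq W$ is unique, since the quotient $W_A/W_{1,A}$ is then a well-defined rank $(n-1)$ deformation of $W/W_1$ carrying two induced triangulations lifting the canonical one, and the relevant parameters $\delta_2,\dots,\delta_n$ continue to satisfy the hypothesis of the lemma (now for $2\le i<j\le n$). Given two such lifts $W_{1,A}$ and $W'_{1,A}$, I consider the composite $\phi\colon W_{1,A}\hookrightarrow W_A\twoheadrightarrow W_A/W'_{1,A}$. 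Once I show $\phi=0$, the resulting inclusion $W_{1,A}\subseteq W'_{1,A}$ is an inclusion between rank one $A$-$B$-pairs whose reduction modulo $m_A$ is the identity on $W_1$, so Nakayama's lemma applied separately to the free $B_e\otimes_{\mathbb{Q}_p}A$- and $B_{\mathrm{dR}}^+\otimes_{\mathbb{Q}_p}A$-modules provided by Lemma $\ref{2}$ forces $W_{1,A}=W'_{1,A}$.

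To show $\phi=0$ I prove $\mathrm{Hom}_{G_K}(W_{1,A},W_A/W'_{1,A})=0$, or equivalently $\mathrm{H}^0(G_K,Y_A)=0$ for $Y_A:=W_{1,A}^\vee\otimes(W_A/W'_{1,A})$. At the residue level $A=E$, the induced filtration on $W/W_1$ has graded pieces $W(\delta_2),\dots,W(\delta_n)$, so a standard dévissage reduces $\mathrm{Hom}_{G_K}(W_1,W/W_1)=0$ to the vanishings $\mathrm{Hom}_{G_K}(W(\delta_1),W(\delta_j))=\mathrm{H}^0(G_K,W(\delta_j/\delta_1))=0$ for $2\le j\le n$, which follow from Proposition $\ref{i}$ combined with the standing hypothesis. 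I then lift this to arbitrary $A$ by induction on the length of $A$: for a small extension $A\twoheadrightarrow A'$ with kernel $(t)$, the $A$-flatness provided by Lemma $\ref{2}$ gives an exact sequence of $A$-$B$-pairs $0\to(t)\otimes_E Y\to Y_A\to Y_{A'}\to 0$ with $Y:=Y_A\otimes_A E$, and the associated left-exact cohomology sequence squeezes $\mathrm{H}^0(G_K,Y_A)$ between $(t)\otimes_E\mathrm{H}^0(G_K,Y)=0$ and $\mathrm{H}^0(G_K,Y_{A'})=0$. The main technical obstacle is ensuring this dévissage is legitimate in the category of $A$-$B$-pairs—that the relevant internal Homs and tensor products yield honest $A$-$B$-pairs via Definition $\ref{4}$ and Lemma $\ref{2}$, and that the displayed short exact sequence is genuinely exact in this category so that the Galois cohomology formalism of Section 2.1 applies.
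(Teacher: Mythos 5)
Your proposal is correct and follows essentially the same route as the paper: reduce to the uniqueness of the lifted filtration, induct on the rank so that only the bottom step $W_{1,A}$ needs to be handled, and kill $\mathrm{Hom}_{G_K}(W_{1,A}, W_A/W'_{1,A})$ by d\'evissage through the graded pieces using the vanishing $\mathrm{H}^0(G_K,W(\delta_j/\delta_1))=0$ from Proposition \ref{i} together with a length induction over $A$. The only (cosmetic) divergence is at the very end: where you deduce $W_{1,A}=W'_{1,A}$ from the resulting inclusion $W_{1,A}\subseteq W'_{1,A}$ by Nakayama applied to the free modules of Lemma \ref{2}, the paper instead invokes symmetry to get the reverse factorization $W'_{1,A}\subseteq W_{1,A}$; both are valid.
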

\begin{proof}
Let $W_A$ be a deformation of $W$ over $A$, let $0\subseteq W_{A ,1}\subseteq \cdots \subseteq W_{A ,n-1}\subseteq W_{A}$ 
and $0\subseteq W'_{A ,1}\subseteq \cdots \subseteq W'_{A, n-1}\subseteq W_{A}$ be two triangulations which are 
lifts of $\mathcal{T}$, then it suffices to show that $W_{A, i}=W'_{A, i}$ for any $i$. Moreover, by induction, it suffices 
to show $W_{A ,1}= W'_{A ,1}$. For proving this, first we consider $\mathrm{Hom}_{G_K}(W_{1,A}, W_A)$. 
This is equal to  $\mathrm{H}^0(G_K, W^{\vee}_{1, A}\otimes W_A)$. Because $\mathrm{H}^0(G_K, -)$ is left exact 
and $\mathrm{H}^0(G_K, W(\delta))=0$ for any $\delta:K^{\times}\rightarrow E^{\times}$ such that 
$\delta\not= \prod_{\sigma\in \mathcal{P}}\sigma(x)^{k_{\sigma}}$ for any $\{k_{\sigma}\}_{\sigma\in\mathcal{P}}
\in \prod_{\sigma\in\mathcal{P}}\mathbb{Z}_{\leqq 0}$ by Proposition $\ref{i}$, we have $\mathrm{H}^0(G_K, W_{1, A}^{\vee}\otimes 
(W_{i+1, A}/W_{i, A}))\allowbreak =\mathrm{H}^0(G_K, W_{1, A}^{\vee}\otimes( W'_{i+1, A}/W'_{i, A}))=0$ for any $i\geqq 2$. 
So we get $\mathrm{Hom}_{G_K}(W_{1, A}, W_{1,A})\allowbreak =\mathrm{Hom}_{G_K}(W_{1,A}, \allowbreak W_A)=\mathrm{Hom}_{G_K}(W_{1, A}, 
W'_{1,A})$. This means that the given inclusion $W_{1, A}\hookrightarrow W_A$ factors through $W'_{1, A}\hookrightarrow W_A$. 
By symmetry, the inclusion $W'_{1,A}\hookrightarrow W_A$ factors through $W_{1, A}\hookrightarrow W_{A}$. 
So we get $W_{1,A}=W'_{1, A'}$.
\end{proof}

\begin{prop}\label{15}
Let $W$ be a trianguline representations with a triangulation $\mathcal{T}$ 
such that the parameter $\{\delta_i\}_{\i=1}^n$ satisfies, for any $1\leqq i<j\leqq n$, $\delta_j/\delta_i\not=\prod_{\sigma\in \mathcal{P}}
\sigma(x)^{k_{\sigma}}$ for any $\{k_{\sigma}\}_{\sigma\in\mathcal{P}}\in \prod_{\sigma\in\mathcal{P}}\mathbb{Z}_{\leqq 0}$.
Then the natural map of functors $D_{W,\mathcal{T}}\rightarrow D_{W}$ is relatively representable.
\end{prop}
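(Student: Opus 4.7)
The plan is to verify the Mayer--Vietoris gluing property that characterizes relative representability for sub-functors. By Lemma \ref{14}, under the hypothesis of the proposition, $D_{W,\mathcal{T}}$ is a sub-functor of $D_W$ and any deformation $W_A$ admits at most one triangulation lifting $\mathcal{T}$. Relative representability of the inclusion $D_{W,\mathcal{T}} \hookrightarrow D_W$ then reduces to the following gluing statement: for any Cartesian diagram $A = A_1 \times_{A_0} A_2$ in $\mathcal{C}_E$ with $A_2 \to A_0$ surjective and any $W_A \in D_W(A)$, if both base changes $W_{A_i} := W_A \otimes_A A_i$ admit triangulations $\mathcal{T}_i$ lifting $\mathcal{T}$, then $W_A$ also admits one. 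Granted this, the collection of ideals $I \subseteq A$ for which $W_A \otimes_A A/I$ admits such a triangulation is closed under intersection; since $A$ is Artinian, a minimal such ideal $I_0$ exists, and $A/I_0$ represents the fiber of $D_{W,\mathcal{T}} \to D_W$ over $W_A$.

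To produce the desired triangulation of $W_A$, first observe that the reductions of $\mathcal{T}_1$ and $\mathcal{T}_2$ over $A_0$ coincide: both are triangulations of $W_{A_0}$ lifting $\mathcal{T}$, hence equal by the uniqueness provided by Lemma \ref{14}. Writing $\mathcal{T}_i : 0 \subseteq W_{1,A_i} \subseteq \cdots \subseteq W_{n,A_i} = W_{A_i}$ with $W_{j,A_i} = (W_{e,j,A_i}, W^+_{\mathrm{dR},j,A_i})$, and using the matching reductions over $A_0$, I define
\[ W_{j,A} := \bigl(W_{e,j,A_1} \times_{W_{e,j,A_0}} W_{e,j,A_2},\ W^+_{\mathrm{dR},j,A_1} \times_{W^+_{\mathrm{dR},j,A_0}} W^+_{\mathrm{dR},j,A_2}\bigr), \]
mimicking the fiber product construction that yields $W_A = W_{A_1} \times_{W_{A_0}} W_{A_2}$ in the proof of Proposition \ref{9}.

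It remains to verify three properties: (i) each $W_{j,A}$ is an $A$-$B$-pair that naturally injects into $W_A$; (ii) the inclusions $W_{j,A} \subseteq W_{j+1,A}$ are saturated; and (iii) each quotient $W_{j+1,A}/W_{j,A}$ is a rank one $A$-$B$-pair. For (i), the basis-lifting argument used in the proof of Proposition \ref{9} (exploiting surjectivity of $A_2 \to A_0$ together with the freeness statements of Lemma \ref{2}) applies verbatim at each step of the filtration. For (iii), the quotient $W_{j+1,A}/W_{j,A}$ is naturally identified with the fiber product of the rank one pieces $W_{j+1,A_i}/W_{j,A_i}$ over $W_{j+1,A_0}/W_{j,A_0}$, so applying (i) to these rank one $B$-pairs produces a rank one $A$-$B$-pair. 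Then (ii) follows automatically, since (iii) exhibits $W_{j+1,A}/W_{j,A}$ as a bona fide $A$-$B$-pair. The main technical obstacle is confirming that the flatness axiom on $W^+_{\mathrm{dR},j,A}/tW^+_{\mathrm{dR},j,A}$ in Definition \ref{1} survives the fiber product; this I would handle using the $A_i$-flatness of the corresponding quotient modules together with the short exact sequence $0 \to A \to A_1 \oplus A_2 \to A_0 \to 0$, which places the fiber-product module in a parallel exact sequence and lets flatness descend from the $A_i$-level.
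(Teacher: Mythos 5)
Your verification of the Mayer--Vietoris gluing property (Mazur's condition that $W^{'"}\in D_{W,\mathcal{T}}(A'\times_A A")$ whenever both projections lie in $D_{W,\mathcal{T}}$) is essentially the paper's argument for that step: match the two triangulations over $A_0$ via the uniqueness from Lemma \ref{14} and glue them by the fiber-product construction of Proposition \ref{9}. However, your reduction of relative representability to this single gluing statement has a genuine gap. The "minimal ideal $I_0$" argument only shows that $A/I_0$ represents the fiber functor on \emph{surjections} $A\twoheadrightarrow B$. A general morphism $f:A\rightarrow B$ in $\mathcal{C}_E$ factors as $A\twoheadrightarrow A/\mathrm{Ker}(f)\hookrightarrow B$, and to conclude that $f_*[W_A]\in D_{W,\mathcal{T}}(B)$ forces $\mathrm{Ker}(f)\supseteq I_0$ you must know the descent property: if $A\hookrightarrow A'$ is an \emph{injection} and $W_A\otimes_A A'$ admits a triangulation lifting $\mathcal{T}$, then so does $W_A$. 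This is Mazur's third condition in $\S 23$ of [Ma97], it does not follow from base-change stability plus gluing, and without it the fiber functor need not be representable by any quotient of $A$ (an injection $A\hookrightarrow A'$ factors through no proper quotient).

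This descent step is where almost all of the work in the paper's proof lies: one reduces by twisting to the case where $W_{1,A'}$ is the trivial $A'$-$B$-pair, computes $\mathrm{H}^0(G_K,W_A)$ by a length count sandwiched between $\mathrm{length}(A)$ and $\mathrm{length}(A')$, produces a generator $v\in\mathrm{H}^0(G_K,W_A)$ lifting $1\in\mathrm{H}^0(G_K,W)$, and checks that the image of the resulting map $B_A\rightarrow W_A$ is a saturated rank one sub $A$-$B$-pair with $A$-flat quotient, after which one induces on the rank. None of this is present in your proposal, so as written the argument establishes only two of the three conditions needed for relative representability.
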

\begin{proof}
By $\S 23$ of [Ma97], it suffices to check that the map $D_{W,\mathcal{T}}\rightarrow D_W$  satisfies the fallowing three conditions (1), (2), (3).
 (1) :For any map $A\rightarrow A'$ in $\mathcal{C}_E$ and $W_{A}\in D_{W,\mathcal{T}}(A)$, then 
 $W_A\otimes_A A'\in D_{W,\mathcal{T}}(A')$. (2): For any maps $A'\rightarrow A$ and $A"\rightarrow A$ in $\mathcal{C}_E$
  and $W^{'"}\in D_W(A'\times_A A")$, if $W^{'"}\otimes_{A'\times_A A"} A'\in D_{W,\mathcal{T}}(A')$ and 
  $W^{'"}\otimes_{A'\times_A A"}A"\in D_{W,\mathcal{T}}(A")$, then $W^{'"}\in D_{W,\mathcal{T}}(A'\times_A A")$.
   (3): For any inclusion $A\hookrightarrow A'$ in $\mathcal{C}_E$ and $W_{A}\in D_W(A)$, if $W_A\otimes_A A'\in 
   D_{W,\mathcal{T}}(A')$, then $W_A\in D_{W,\mathcal{T}}(A)$. 
   The condition (1) is trivial. For (2), let $W^{'"}\in D_{W}(A'\times_A A")$ be such that $W_{A'}:=W^{'"}\otimes_{A'\times_A A"}A'
   \in D_{W,\mathcal{T}}(A')$ and $W_{A"}:=W^{'"}\otimes_{A'\times_A A"}A"\in D_{W,\mathcal{T}}(A")$. We put $W_A:=W^{'"}\otimes_{A'\times_A A"}A$. Then as in proof of Proposition $\ref{9}$, we have $W^{'"}\isom W_{A'}\times_{W_A} W_{A"}$. 
   And, by Lemma $\ref{14}$, the triangulations of $W_A$ induced from $W_{A'}$ and $W_{A"}$ are same, so 
   these triangulations induce a triangulation of $W^{'"}\isom W_{A'}\times_{W_A} W_{A"}$, i.e. $W^{'"}\in D_{W,\mathcal{T}}(A'\times_A A")$.
   Finally, we prove the condition (3). Let $W\in D_W(A)$ and $A\hookrightarrow A'$ be an inclusion such that 
   $W_A\otimes_A A'\in D_{W,\mathcal{T}}(A')$. Let $0\subseteq W_{1,A'}\subseteq\cdots\subseteq W_{n-1,A'}\subseteq W_{A}
   \otimes_A A'$ be a triangulation lifting of $\mathcal{T}$. By induction on rank of $W$, it suffices to show that there exists 
   a rank one sub $A$-$B$-pair $W_{1,A}\subseteq W_A$ such that $W_{1,A}\otimes_{A}A'=W_{1,A'}$ and that $W_A/W_{1,A}$ is an $A$-$B$-pair.
    By Proposition $\ref{5}$, by twisting, 
   we may assume that $W_{1,A'}\isom (B_e\otimes_{\mathbb{Q}_p}A', B^+_{\mathrm{dR}}\otimes_{\mathbb{Q}_p}A')$ is 
   trivial $A'$-$B$-pair (so $\delta_1:K^{\times}\rightarrow E^{\times}$ is trivial homomorphism). 
   Then, by the same way as in the proof of Lemma $\ref{14}$, we have  $A'\isom \mathrm{H}^0(G_K, W_{1,A'})=
   \mathrm{H}^0(G_K, W_{A}\otimes_A A')$ and $E=\mathrm{H}^0(G_K, W_{1})=\mathrm{H}^0(G_K, W)$.
   Then, by using these and the left exactness of $\mathrm{H}^0(G_K,-)$, we have 
   $\mathrm{length}(A')=\mathrm{length}(\mathrm{H}^0(G_K, W_{A'}))\leqq \mathrm{length}(\mathrm{H}^0(G_K, W_A))+ 
   \mathrm{length}(\mathrm{H}^0(G_K, W_A\otimes(A'/A))\leqq \mathrm{length}(A)+\mathrm{length}(A'/A)
   =\mathrm{length}(A')$.  So we have $\mathrm{length}(\mathrm{H}^0(G_K, W_A))=\mathrm{length}(A)$. 
   From this, by similar argument, we can show the natural sequence 
   $0\rightarrow \mathrm{H}^0(G_K, W_A\otimes_A m_A)\rightarrow \mathrm{H}^0(G_K, W_A)
   \rightarrow \mathrm{H}^0(G_K, W)\rightarrow 0$ is exact. 
   We take a lift $v\in \mathrm{H}^0(G_K, W_A)$ which maps to $1\in E\isom \mathrm{H}^0 (G_K, W)$ 
   by the surjection $\mathrm{H}^0(G_K, W_A)\rightarrow \mathrm{H}^0(G_K, W)$. 
   Then we claim that $A\isom Av=\mathrm{H}^0(G_K, W_A)$. Because the surjection $\mathrm{H}^0(G_K, W_A)\rightarrow \mathrm{H}^0(G_K, W)$ factors through 
   $\mathrm{H}^0(G_K, W_A)\hookrightarrow \mathrm{H}^0(G_K, W_A\otimes_A A')\rightarrow 
   \mathrm{H}^0(G_K, W)$ and $\mathrm{H}^0(G_K, W_{A}\otimes_A A')\isom A'$, the image of $v$ in $\mathrm{H}^0(G_K, 
   W_A\otimes_A A')$ must be a generator of $\mathrm{H}^0(G_K, W_A\otimes_A A')$. So we have $A\isom Av\subseteq 
   \mathrm{H}^0(G_K, W_A)$. By comparing length, we get the claim. By using this $v\in \mathrm{H}^0(G_K, W_A)
   =\mathrm{Hom}_{G_K}(B_A, W_A)$ (here $B_A$ is the trivial $A$-$B$-pair), we define a map $v:B_A\rightarrow W_A$.
   Then $v\otimes \mathrm{id}_{A'}:B_A'\rightarrow W_A'$ induces an isomorphism $B_A'\isom W_{1,A'}$ because 
   $\mathrm{H}^0(G_K, W_{1,A'})=\mathrm{H}^0(G_K, W_A\otimes_A A')$.  From this, $v$ is injective and if we put $W_{1,A}
   \subseteq W_A$ the image of $v$, then $W_{1,A}\otimes_A A'=W_{1,A'}$. Finally, we need to prove 
   that the quotient $W_A/W_{1,A}$ is an $A$-$B$-pair. First, $W_A/W_{1,A}$ is an $E$-$B$-pair because 
   $W_{1,A}$ is saturated in $W_{A}\otimes_A A'$ as an $E$-$B$-pair, so also saturated in $W_A$. For proving $A$-flatness 
   of $W_{A}/W_{1,A}:=(W_{0, e}, W^+_{0,\mathrm{dR}})$, by definition of $v$, 
   we have that the natural map $W_{1,A}\otimes_A E\hookrightarrow W_{A}\otimes_A E$ is injective. From this, we get
   $A$-flatness of $W_{0,e}$ and $W^+_{0,\mathrm{dR}}/tW^+_{0,\mathrm{dR}}$, so 
   $W_A/W_{1,A}$ is an $A$-$B$-pair. We finish to prove this proposition.
     \end{proof}
     \begin{corollary}\label{16}
     Let $W$ be a trianguline $E$-$B$-pair with a triangulation $\mathcal{T}$ such that $\mathrm{End}_{G_K}(W)=E$
      and the parameter $\{\delta_i\}_{i=1}^n$ of $\mathcal{T}$ satisfies, for any $1\leqq i<j\leqq n$, 
      $\delta_j/\delta_i\not= \prod_{\sigma\in \mathcal{P}}\sigma(x)^{k_{\sigma}}$ for any $\{k_{\sigma}\}_{\sigma\in \mathcal{P}}
      \in \prod_{\sigma\in\mathcal{P}}\mathbb{Z}_{\leqq 0}$. 
      Then the functor $D_{W,\mathcal{T}}$ is pro-representable by a quotient $R_{W,\mathcal{T}}$ of $R_W$.
      \end{corollary}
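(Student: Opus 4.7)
The plan is to combine three results already in place. By Proposition \ref{9}, the hypothesis $\mathrm{End}_{G_K}(W)=E$ gives pro-representability of $D_W$ by a complete noetherian local $E$-algebra $R_W$. By Lemma \ref{14}, the hypothesis on the parameter $\{\delta_i\}_{i=1}^n$ guarantees that the forgetful map $D_{W,\mathcal{T}}\to D_W$ is injective on every $A\in\mathcal{C}_E$, i.e.\ realizes $D_{W,\mathcal{T}}$ as a sub-functor of $D_W$. Proposition \ref{15} promotes this to relative representability of the forgetful map. The remaining task is purely formal: deduce pro-representability of the source of a relatively representable map into a pro-representable target, and identify the representing ring as a quotient of $R_W$.

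For the first point I would invoke the standard principle (\S 23 of [Ma97]) that if $G$ is pro-representable and $F\to G$ is relatively representable, then $F$ is pro-representable. Concretely, I would check Schlessinger's conditions $(H_1)$--$(H_4)$ for $D_{W,\mathcal{T}}$. Condition $(H_3)$ is immediate: by Lemma \ref{14} the tangent space $t_{D_{W,\mathcal{T}}}$ embeds into $t_{D_W}$, which is finite dimensional by Corollary \ref{13}. For $(H_1)$, $(H_2)$, $(H_4)$, one starts with the corresponding property for $D_W$ (already verified inside the proof of Proposition \ref{9}), lifts any element of $D_{W,\mathcal{T}}(A')\times_{D_{W,\mathcal{T}}(A)}D_{W,\mathcal{T}}(A'')$ to a deformation of $W$ over $A'\times_A A''$ via $D_W$, and then applies condition (2) of Proposition \ref{15} to upgrade this deformation to a trianguline one; the sub-functor property from Lemma \ref{14} automatically ensures uniqueness of the triangulation, giving bijectivity in $(H_2)$ and $(H_4)$.

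To identify the representing ring as a quotient of $R_W$, I would argue by Yoneda: let $R_{W,\mathcal{T}}$ pro-represent $D_{W,\mathcal{T}}$. The natural transformation $D_{W,\mathcal{T}}\hookrightarrow D_W$ corresponds, via the universal deformation, to a morphism $R_W\to R_{W,\mathcal{T}}$ of complete local $E$-algebras. Since $D_{W,\mathcal{T}}\to D_W$ is injective on points with values in each Artinian quotient of $R_{W,\mathcal{T}}$, the induced map on Hom-sets into any $A\in\mathcal{C}_E$ is injective; a standard argument with the mod $\mathfrak{m}^n$ truncations (equivalently, the fact that a map between complete local rings with residue field $E$ is surjective iff it is surjective on tangent spaces, which follows here from $t_{D_{W,\mathcal{T}}}\hookrightarrow t_{D_W}$) then forces $R_W\twoheadrightarrow R_{W,\mathcal{T}}$.

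I do not expect any conceptual obstacle: Proposition \ref{15} already encodes the nontrivial geometric input (that triangulations glue under fiber products and descend along inclusions of coefficient rings), and everything else is the standard Schlessinger--Mazur formalism for passing from a relatively representable morphism of functors to a quotient of the universal ring. The only point requiring care is keeping track of which uniqueness statements come from Lemma \ref{14} (the sub-functor property) versus which come from Proposition \ref{15} (the relative representability) when verifying $(H_1)$, $(H_2)$, $(H_4)$.
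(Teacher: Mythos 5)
Your proposal is correct and follows essentially the same route as the paper, whose own proof is the one-liner ``This follows from Proposition \ref{9} and Proposition \ref{15}'': pro-representability of $D_W$ plus relative representability of the forgetful map, fed into the Mazur \S 23 formalism that you spell out. One minor citation slip: you invoke Corollary \ref{13} for finiteness of $t_{D_W}$, but that corollary assumes $\mathrm{H}^2(G_K,\mathrm{ad}(W))=0$, which Corollary \ref{16} does not; the finiteness you actually need already follows from Lemma \ref{10} together with Theorem \ref{h}, and this is exactly how $(H_3)$ is verified inside the proof of Proposition \ref{9}.
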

      \begin{proof} This follows from Proposition $\ref{9}$ and Proposition $\ref{15}$.
      \end{proof}
      
 Next, we prove  the formally smoothness of the functor $D_{W,\mathcal{T}}$.
 \begin{prop}\label{17}
 Let $W$ be a trianguline $E$-$B$-pair of rank $n$ with triangulation $\mathcal{T}$ such that
  whose parameter $\{\delta_i\}_{i=1}^{n}$ satisfies, for any $1\leqq i< j\leqq n$, $\delta_i/\delta_j\not=
  |\mathrm{N}_{K/\mathbb{Q}_p}(x)|\prod_{\sigma\in\mathcal{P}}\sigma(x)^{k_{\sigma}} $ for any 
  $\{k_{\sigma}\}_{\sigma\in\mathcal{P}}\in \prod_{\sigma\in\mathcal{P}}\mathbb{Z}_{\geqq 1}$.
  Then the functor $D_{W,\mathcal{T}}$ is formally smooth.
  \end{prop}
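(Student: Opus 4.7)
The plan is to prove formal smoothness by induction on the rank $n$ of $W$, reducing the lifting problem at each step to the vanishing of one specific $\mathrm{H}^2$, which is arranged exactly by the hypothesis together with Proposition~\ref{i}(2). Formal smoothness of $D_{W,\mathcal{T}}$ means: for every small extension $A'\twoheadrightarrow A$ in $\mathcal{C}_E$ with kernel $I$, every trianguline deformation $(W_A,\iota_A,\mathcal{T}_A)$ over $A$ admits a lift to a trianguline deformation over $A'$.

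For the base case $n=1$, the functor $D_{W,\mathcal{T}}$ equals $D_W$, and $\mathrm{ad}(W)\isom W(1)$ is the trivial $B$-pair. Since the trivial homomorphism is not of the form $|N_{K/\mathbb{Q}_p}|\prod_{\sigma\in\mathcal{P}}\sigma(x)^{k_\sigma}$ with $k_\sigma\in\mathbb{Z}_{\geqq 1}$, Proposition~\ref{i}(2) gives $\mathrm{H}^2(G_K,\mathrm{ad}(W))=0$, and Proposition~\ref{12} concludes. For the inductive step ($n\geqq 2$), given a trianguline deformation $(W_A,\iota_A,\mathcal{T}_A)$ over $A$, let $W_{n-1,A}\subseteq W_A$ be the sub of rank $n-1$ provided by $\mathcal{T}_A$, and set $L_A:=W_A/W_{n-1,A}$, which is a rank one $A$-$B$-pair. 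The induced triangulation on $W_{n-1,A}$ has parameter $\{\delta_i\}_{i=1}^{n-1}$, which trivially satisfies the same hypothesis, so by induction $W_{n-1,A}$ lifts to a trianguline $A'$-$B$-pair $W_{n-1,A'}$. By Proposition~\ref{5}, any continuous homomorphism $K^\times\to A^\times$ lifts to $A'$, so $L_A$ lifts to a rank one $A'$-$B$-pair $L_{A'}$.

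It remains to lift the extension class of $0\to W_{n-1,A}\to W_A\to L_A\to 0$, viewed as an element of $\mathrm{Ext}^1(L_A,W_{n-1,A})\isom \mathrm{H}^1(G_K,L_A^{\vee}\otimes W_{n-1,A})$, to a class in $\mathrm{H}^1(G_K,L_{A'}^{\vee}\otimes W_{n-1,A'})$. Since $I\cdot m_{A'}=0$, the short exact sequence
\[ 0\to I\otimes_E (L^{\vee}\otimes W_{n-1})\to L_{A'}^{\vee}\otimes W_{n-1,A'}\to L_A^{\vee}\otimes W_{n-1,A}\to 0 \]
of $A'$-$B$-pairs is well defined, and its associated long exact cohomology sequence places the obstruction in $I\otimes_E\mathrm{H}^2(G_K,L^{\vee}\otimes W_{n-1})$. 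Now $L^{\vee}\otimes W_{n-1}$ inherits a filtration whose rank one graded pieces are $W(\delta_i/\delta_n)$ for $i=1,\ldots,n-1$, and the hypothesis $\delta_i/\delta_n\not=|N_{K/\mathbb{Q}_p}|\prod_{\sigma\in\mathcal{P}}\sigma(x)^{k_\sigma}$ with $k_\sigma\in\mathbb{Z}_{\geqq 1}$ together with Proposition~\ref{i}(2) yields $\mathrm{H}^2(G_K,W(\delta_i/\delta_n))=0$ for each $i$. A straightforward dévissage using the long exact sequences attached to this filtration gives $\mathrm{H}^2(G_K,L^{\vee}\otimes W_{n-1})=0$, so the extension lifts. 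The resulting $A'$-$B$-pair $W_{A'}$ inherits a triangulation from those on $W_{n-1,A'}$ and $L_{A'}$ that reduces to $\mathcal{T}_A$ by construction, completing the induction.

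The main obstacle is the vanishing of $\mathrm{H}^2(G_K,L^{\vee}\otimes W_{n-1})$; the hypothesis on the parameter is designed precisely so that each rank one graded piece meets the criterion of Proposition~\ref{i}(2). Once this vanishing is secured, the remaining verifications---identifying the extension group with $\mathrm{H}^1$, checking that the extension of $A'$-flat $B$-pairs is again an $A'$-$B$-pair, and that the lifted triangulation reduces correctly---are routine and parallel to the arguments in the proofs of Proposition~\ref{9} and Proposition~\ref{12}.
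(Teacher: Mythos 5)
Your proof is correct and follows essentially the same route as the paper: induction on the rank, with the base case handled by Proposition \ref{12}, the sub-object $W_{n-1,A}$ and the rank one quotient lifted separately, and the extension class lifted because the obstruction lies in $I\otimes_E\mathrm{H}^2(G_K,W_{n-1}(\delta_n^{-1}))$, which vanishes by d\'evissage and Proposition \ref{i}(2) under the stated hypothesis. The only cosmetic difference is that the paper lifts the rank one quotient via the already-established rank one case of the induction rather than by lifting the character directly; both are valid.
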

  \begin{proof}
  We prove this proposition by induction of rank of $W$. 
  When $W$ is rank one, then we have $D_{W,\mathcal{T}}=D_W$ and 
 $\mathrm{ad}(W)=B_E$ is the trivial $E$-$B$-pair. 
 So $\mathrm{H}^2(G_K,\mathrm{ad}(W))=0$ by Proposition $\ref{i}$. So, 
 by Proposition $\ref{12}$, $D_{W,\mathcal{T}}$ is formally smooth in this case.
 Let's assume that the proposition is proved for rank $n-1$ $E$-$B$-pairs,
  let $W$ be a rank $n$ $E$-$B$-pair with triangulation 
  $\mathcal{T}:0\subseteq W_1\subseteq \cdots \subseteq W_{n-1}\subseteq W_n=W$ 
  whose parameter $\{\delta_i\}_{i=1}^n$ satisfying the condition as above. 
  Let $A'\rightarrow A$ be a small extension in $\mathcal{C}_E$ and let 
  $W_A$ be a trianguline deformation of $(W,\mathcal{T})$ with triangulation 
  $\mathcal{T}_A:0\subseteq W_{1,A}\subseteq \cdots \subseteq W_{n-1,A}\subseteq W_{n,A}=W_A$ 
  which is a lift of $\mathcal{T}$. It suffices to prove that there exists a split trianguline $A'$-$B$-pair 
  $W_{A'}$ with a triangulation $0\subseteq W_{1,A'}\subseteq \cdots \subseteq W_{n-1.A'}\subseteq 
  W_{n,A'}=W_{A'}$ lifting of $W_A$ and $\mathcal{T}_A$. We take a lifting as follows. 
  First, because $W_{n-1}$ is a trianguline $E$-$B$-pair of rank $n-1$ satisfying the conditions as above, 
  so by induction hypothesis, there exists a rank $n-1$ trianguline $A'$-$B$-pair $W_{n-1,A'}$ 
  with a triangulation $0\subseteq W_{1,A'}\subseteq \cdots \subseteq W_{n-2,A'}\subseteq W_{n-1,A'}$ 
  which is a lift of $W_{n-1,A}$ and $0\subseteq W_{1,A}\subseteq\cdots\subseteq W_{n-1,A}$. 
  If we put $\mathrm{gr}_nW_A:=W_{A}/W_{n-1,A}$, then, by rank one case and by Proposition $\ref{5}$, there exists 
  a continuous homomorphism $\delta_{n,A'}:K^{\times}\rightarrow A^{' \times}$ such that the rank one $A'$-$B$-pair 
  $W(\delta_{n,A'})$ satisfies $W(\delta_{n,A'})\otimes_{A'}A=W(\delta_{n,A})\isom \mathrm{gr}_nW_A$.
  (where $\delta_{n,A}:K^{\times}\rightarrow A^{\times}$ is the reduction of $\delta_{n,A'}$.) 
  Then $[W_A]$ is an element in $\mathrm{Ext}^1(W(\delta_{n,A}), W_{n-1,A})\isom \mathrm{H}^1(G_K,
  W_{n-1,A}(\delta_{n,A}^{-1}))$. If we take the long exact sequence associated 
  to 
   \begin{equation*}
  0\rightarrow I\otimes_E W_{n-1}(\delta_n^{-1})\rightarrow W_{n-1,A'}(\delta^{-1}_{n,A'})\rightarrow 
  W_{n-1,A}(\delta^{-1}_{n,A})\rightarrow 0
  \end{equation*}
  
   (where $I\subseteq A'$ is the kernel of $A'\rightarrow A$),  then we get a long exact sequence
  \[
  \begin{array}{ll}
  \cdots \rightarrow \mathrm{H}^1(G_K,W_{n-1,A'}(\delta^{-1}_{n,A'}))& \rightarrow \mathrm{H}^1(G_K, W_{n-1,A}(\delta^{-1}_{n,A}))\\
                                        &\rightarrow I\otimes_E \mathrm{H}^2(G_K, W_{n-1}(\delta^{-1}_n))\rightarrow\cdots 
  \end{array}
  \]
  By assumption on $\{\delta_i\}_{i=1}^n$ and by Proposition $\ref{i}$, we have $\mathrm{H}^2(G_K, W_{n-1}(\delta^{-1}_n))=0$. 
  So we can take a $[W_{A'}]\in\mathrm{Ext}^1(W(\delta_{n,A'}),W_{n-1,A'})\isom\mathrm{H}^1(G_K, W_{n-1,A'}(\delta^{-1}_{n,A'}))$ 
  which is a trianguline lift of $[W_A]$. This proves the proposition.
  \end{proof}
  
  Next, we calculate the dimension of $D_{W,\mathcal{T}}$. For this, we interpret 
  $D_{W,\mathcal{T}}(E[\varepsilon])$ in terms of Galois cohomology of $B$-pair as in Lemma $\ref{10}$.
  Let $W$ be a trianguline $E$-$B$-pair with triangulation $\mathcal{T}:0\subseteq W_1\subseteq \cdots\subseteq 
  W_{n-1}\subseteq W_n=W$ whose parameter is $\{\delta_i\}_{i=1}^n$. 
  Then we define an $E$-$B$-pair $\mathrm{ad}_{\mathcal{T}}(W)$ by 
  $\mathrm{ad}_{\mathcal{T}}(W):=\{f\in\mathrm{ad}(W)| f(W_i)\subseteq W_i $ for any $1\leqq i\leqq n \}$. 
  \begin{lemma}\label{18}
  Let $W$ be a trianguline $E$-$B$-pair. Then there exists 
  a canonical bijection of sets $D_{W,\mathcal{T}}(E[\varepsilon])
  \isom \mathrm{H}^1(G_K, \mathrm{ad}_{\mathcal{T}}(W))$. 
  In particular, if $D_{W,\mathcal{T}}$ has a canonical structure of $E$-vector apace 
  $($c.f.  the condition $(2)$ in Shlessinger's criterion $\ref{8}$$)$, then this bijection is an $E$-linear isomorphism.
  \end{lemma}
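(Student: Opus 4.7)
The plan is to mimic the proof of Lemma \ref{10}, with the additional bookkeeping that every basis and lift chosen in that argument is made compatible with the triangulation. Given a trianguline deformation $(W_{E[\varepsilon]}, \iota, \mathcal{T}_{E[\varepsilon]})$ of $(W, \mathcal{T})$, I would first choose an isomorphism $W_{E[\varepsilon], e} \isom W_e e_1 \oplus W_e e_2$ of the sort used in Lemma \ref{10} with the additional property that each $W_{i, E[\varepsilon], e}$ corresponds under this isomorphism to $W_{i, e} e_1 \oplus W_{i, e} e_2$. This is built inductively using Lemma \ref{2}: the successive graded pieces are rank one $E[\varepsilon]$-$B$-pairs, hence free, so one can lift a compatible splitting step by step. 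After making the analogous choice for $W^+_{E[\varepsilon], \mathrm{dR}}$ and for the comparison matrix $P$, the triple $(c_e, c_{\mathrm{dR}}, c)$ constructed exactly as in Lemma \ref{10} has the property that each coordinate preserves the filtration, since the $G_K$-action on $W_{E[\varepsilon]}$ stabilizes $\mathcal{T}_{E[\varepsilon]}$ and the identifications were chosen compatibly. Thus the class lies in $\mathrm{H}^1(G_K, \mathrm{ad}_{\mathcal{T}}(W))$.

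Next I would verify that this class is independent of the filtration-compatible choices. Any two such choices differ by an automorphism of $W_e e_1 \oplus W_e e_2$ that preserves the filtration and similarly on the de Rham side, i.e.\ by an element of $\mathrm{ad}_{\mathcal{T}}(W)$, so the resulting coboundary lies inside the same subcomplex computing $\mathrm{H}^1(G_K, \mathrm{ad}_{\mathcal{T}}(W))$. Injectivity of the induced map $D_{W, \mathcal{T}}(E[\varepsilon]) \to \mathrm{H}^1(G_K, \mathrm{ad}_{\mathcal{T}}(W))$ then amounts to the observation that if two trianguline deformations yield cohomologous cocycles, the coboundary furnishes an isomorphism between the underlying deformations which by construction preserves the triangulations, giving the required equivalence in $D_{W, \mathcal{T}}(E[\varepsilon])$.

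For surjectivity, given a cocycle $(c_e, c_{\mathrm{dR}}, c)$ with values in $\mathrm{ad}_{\mathcal{T}}(W)$, I would run the construction of Lemma \ref{10} in reverse to build an $E[\varepsilon]$-$B$-pair $W_{E[\varepsilon]}$, and then define $W_{i, E[\varepsilon]}$ as the sub $E[\varepsilon]$-$B$-pair $W_{i, e} e_1 \oplus W_{i, e} e_2$ together with the corresponding sub $B^+_{\mathrm{dR}}\otimes_{\mathbb{Q}_p} E[\varepsilon]$-lattice. Because the cocycle preserves the filtration, each $W_{i, E[\varepsilon]}$ is $G_K$-stable, is saturated, and has rank-one quotient (by Proposition \ref{5}), which yields the required triangulation $\mathcal{T}_{E[\varepsilon]}$ lifting $\mathcal{T}$.

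Finally, for the statement about the $E$-vector space structure, naturality of both constructions in $W$ ensures that the bijection intertwines the Schlessinger sum and scalar action on $D_{W, \mathcal{T}}(E[\varepsilon])$ with the usual $E$-linear structure on Galois cohomology. I expect the main technical point to be the inductive existence of filtration-compatible decompositions $W_{i, E[\varepsilon], e} \isom W_{i, e} e_1 \oplus W_{i, e} e_2$ on both the $B_e$ and the $B^+_{\mathrm{dR}}$ sides simultaneously with a common comparison matrix; once this splitting argument is in place via the freeness results of Lemma \ref{2}, the rest of the proof runs in close parallel with Lemma \ref{10}.
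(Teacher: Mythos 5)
Your proposal is correct and follows essentially the same route as the paper: the paper's proof likewise chooses a splitting of $W_{e,E[\varepsilon]}$ (and of $W^+_{\mathrm{dR},E[\varepsilon]}$) as a \emph{filtered} module compatible with the triangulation, observes that the resulting cocycle $(c_e,c_{\mathrm{dR}},c)$ then takes values in $\mathrm{ad}_{\mathcal{T}}(W)$, and asserts the resulting map is a bijection. You simply spell out in more detail the points the paper leaves implicit (inductive existence of the filtered splitting via freeness of the graded pieces, independence of choices, and the inverse construction for surjectivity).
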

  \begin{proof}
The construction of the map $D_{W,\mathcal{T}}(E[\varepsilon])\rightarrow 
\mathrm{H}^1(G_K,\mathrm{ad}_{\mathcal{T}}(W))$ is same as in the proof of 
Lemma $\ref{10}$. We put $\mathrm{ad}_{\mathcal{T}}(W):=(\mathrm{ad}_{\mathcal{T}}(W_e), \mathrm{ad}_{\mathcal{T}}(W^+_{\mathrm{dR}}))$. Let $W_{E[\varepsilon]}:=(W_{e,E[\varepsilon]},W^+_{\mathrm{dR},E[\varepsilon]})$ be a trianguline deformation 
of $(W,\mathcal{T})$ over $E[\varepsilon]$ whose triangulation $\mathcal{T}_{E[\epsilon]}$ is $0\subseteq W_{1,E[\varepsilon]},\subseteq\cdots\subseteq 
W_{n-1,E[\varepsilon]}\subseteq W_{n,E[\varepsilon]}=W_{E[\varepsilon]}$. Then we can take a splitting 
$W_{e,E[\varepsilon]}=W_e e_1\oplus W_ee_2$ as a filtered $B_e\otimes_{\mathbb{Q}_p}E$-module such that 
$W_e e_1=\varepsilon W_{e,E[\varepsilon]}$ and the natural map $W_ee_2\hookrightarrow W_{e,E[\varepsilon]}
\rightarrow W_{e,E[\varepsilon]}/\varepsilon W_{e,E[\varepsilon]}\isom W_e$ is such that $ye_2\mapsto y$ for any $y\in W_e$.
Then, if we define $c_e:G_K\rightarrow \mathrm{Hom}_{B_e\otimes_{\mathbb{Q}_p}E}(W_e, W_e)$ as in the proof 
of Lemma $\ref{10}$, we can check that the image of $c_e$ is contained in $\mathrm{ad}_{\mathcal{T}}(W_e)$. In the same way,  
we can define $c_{\mathrm{dR}}:G_K\rightarrow \mathrm{ad}_{\mathcal{T}}(W^+_{\mathrm{dR}})$ from a 
filtered splitting $W^+_{\mathrm{dR},E[\varepsilon]}=W^+_{\mathrm{dR}}e_1\oplus W^+_{\mathrm{dR}}e'_2$. 
Moreover, we can define $c\in \mathrm{ad}_{\mathcal{T}}(W_{\mathrm{dR}})$ by $ye_2=c(y)e_1+ye'_2$ for any $y\in W_{\mathrm{dR}}$.
Then the map $D_{W,\mathcal{T}}(E[\varepsilon])\rightarrow \mathrm{H}^1(G_K, \mathrm{ad}_{\mathcal{T}}(W)):[(W_{E[\varepsilon]},
\mathcal{T}_{E[\varepsilon]})]\mapsto [(c_e,c_{\mathrm{dR}},c)]$ defines an bijection and, when $D_{W,\mathcal{T}}(E[\varepsilon])$ 
has a canonical $E$-vector space structure, this is an $E$-linear isomorphism.
\end{proof}

We calculate the dimension of $R_{W,\mathcal{T}}$.
\begin{prop}\label{19}
Let $W$ be a trianguline $E$-$B$-pair of rank $n$ such that  $\mathrm{End}_{G_K}(W)\allowbreak =E$ whose triangulation 
$\mathcal{T}:0\subseteq W_1\subseteq \cdots\subseteq W_{n-1}\subseteq W_n=W$ satisfies 
the following. The parameter $\{\delta_i\}_{i=1}^n$ satisfies, for any $1\leqq i < j\leqq n$,  $\delta_j/\delta_i\not= \prod_{\sigma\in\mathcal{P}}\sigma(x)^{k_{\sigma}}$ 
for any $\{k_{\sigma}\}_{\sigma\in\mathcal{P}}\in \prod_{\sigma\in\mathcal{P}}\mathbb{Z}_{\leqq 0}$ and 
$\delta_i/\delta_j\not= |\mathrm{N}_{K/\mathbb{Q}_p}(x)|\prod_{\sigma\in\mathcal{P}}\sigma(x)^{k_{\sigma}}$ 
for any $\{k_{\sigma}\}_{\sigma\in\mathcal{P}}\in \prod_{\sigma\in\mathcal{P}}\mathbb{Z}_{\geqq 1}$. 
Then the universal trianguline deformation ring $R_{W,\mathcal{T}}$ is a quotient ring of $R_W$ such 
that $R_{W,\mathcal{T}}\isom E[[T_1,\cdots, T_{d_n}]]$ where $d_n:=\frac{n(n+1)}{2}[K:\mathbb{Q}_p]+1$.
\end{prop}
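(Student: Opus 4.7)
The plan is to combine the pro-representability and formal-smoothness results already available with a cohomological computation of the tangent space dimension via the Euler--Poincar\'e formula; the main technical point will be the vanishing of a certain $\mathrm{H}^2$.

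First, Corollary~$\ref{16}$ applies: the assumption $\mathrm{End}_{G_K}(W) = E$ together with the first condition on the parameters (ruling out $\delta_j/\delta_i = \prod_\sigma \sigma(x)^{k_\sigma}$ with all $k_\sigma \le 0$ for $i < j$) yields pro-representability of $D_{W,\mathcal{T}}$ by a quotient $R_{W,\mathcal{T}}$ of $R_W$. Proposition~$\ref{17}$ applies: the second condition (ruling out $\delta_i/\delta_j = |\mathrm{N}_{K/\mathbb{Q}_p}| \prod_\sigma \sigma(x)^{k_\sigma}$ with all $k_\sigma \ge 1$ for $i < j$) gives formal smoothness. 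Hence $R_{W,\mathcal{T}} \cong E[[T_1,\dots,T_d]]$ with $d = \dim_E D_{W,\mathcal{T}}(E[\varepsilon])$, and it remains only to compute $d$.

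By Lemma~$\ref{18}$, $d = \dim_E \mathrm{H}^1(G_K, \mathrm{ad}_{\mathcal{T}}(W))$; the rank of $\mathrm{ad}_{\mathcal{T}}(W)$ equals $\frac{n(n+1)}{2}$, since in a basis adapted to $\mathcal{T}$ it corresponds to lower-triangular endomorphisms. The Euler--Poincar\'e formula (Theorem~$\ref{h}$(2)) then reduces the computation to proving
\begin{equation*}
\dim_E \mathrm{H}^0(G_K, \mathrm{ad}_{\mathcal{T}}(W)) = 1 \quad \text{and} \quad \dim_E \mathrm{H}^2(G_K, \mathrm{ad}_{\mathcal{T}}(W)) = 0.
\end{equation*}
The $\mathrm{H}^0$ identity is immediate: any class is a $G_K$-equivariant endomorphism of $W$, hence a scalar by $\mathrm{End}_{G_K}(W) = E$, and scalars automatically preserve $\mathcal{T}$.

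The main step, which I expect to be the key obstacle, is the vanishing of $\mathrm{H}^2$. I would construct a descending filtration of $\mathrm{ad}_{\mathcal{T}}(W)$ by saturated sub-$B$-pairs
\begin{equation*}
\mathrm{ad}_{\mathcal{T}}(W) = \mathrm{Fil}^0 \supseteq \mathrm{Fil}^1 \supseteq \cdots \supseteq \mathrm{Fil}^n = 0,
\end{equation*}
with $\mathrm{Fil}^k := \{f \in \mathrm{ad}(W) : f(W_i) \subseteq W_{i-k} \text{ for all } i\}$ (using the convention $W_j = 0$ for $j \le 0$). The graded pieces decompose along the flag as
\begin{equation*}
\mathrm{gr}^k\, \mathrm{ad}_{\mathcal{T}}(W) \;\cong\; \bigoplus_{i=k+1}^{n} W\bigl(\delta_{i-k}\delta_i^{-1}\bigr),
\end{equation*}
which recovers the total rank $\sum_{k=0}^{n-1}(n-k) = \frac{n(n+1)}{2}$. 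Applying the long exact sequence in cohomology inductively to $0 \to \mathrm{Fil}^{k+1} \to \mathrm{Fil}^k \to \mathrm{gr}^k \to 0$ reduces the vanishing of $\mathrm{H}^2(G_K, \mathrm{ad}_{\mathcal{T}}(W))$ to the vanishing of $\mathrm{H}^2(G_K, W(\delta_j\delta_i^{-1}))$ for every pair $1 \le j \le i \le n$. By Proposition~$\ref{i}$(2) the only obstruction is $\delta_j\delta_i^{-1} = |\mathrm{N}_{K/\mathbb{Q}_p}| \prod_\sigma \sigma(x)^{k_\sigma}$ with each $k_\sigma \ge 1$. For $j = i$ the quotient is the trivial character, which is not of that shape; for $j < i$ this is precisely forbidden by the second hypothesis on $\{\delta_i\}$. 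Hence $\mathrm{H}^2 = 0$ and $d = 1 + 0 + [K:\mathbb{Q}_p] \cdot \frac{n(n+1)}{2} = d_n$, as required.
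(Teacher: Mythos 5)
Your proof is correct and follows essentially the same route as the paper: pro-representability via Corollary~\ref{16}, formal smoothness via Proposition~\ref{17}, identification of the tangent space via Lemma~\ref{18}, and then the Euler--Poincar\'e formula together with $\mathrm{H}^0=E$ and the vanishing of $\mathrm{H}^2(G_K,\mathrm{ad}_{\mathcal{T}}(W))$, the latter reduced by d\'evissage to $\mathrm{H}^2(G_K,W(\delta_j\delta_i^{-1}))=0$ for $j\leqq i$ via Proposition~\ref{i}. The only (immaterial) difference is the organization of the d\'evissage: you filter $\mathrm{ad}_{\mathcal{T}}(W)$ by subdiagonals all at once, whereas the paper inducts on $n$ using the exact sequence $0\rightarrow \mathrm{Hom}(W(\delta_n),W)\rightarrow \mathrm{ad}_{\mathcal{T}}(W)\rightarrow \mathrm{ad}_{\mathcal{T}_{n-1}}(W_{n-1})\rightarrow 0$; both land on the same rank-one pieces.
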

\begin{proof}
From Proposition $\ref{15}$ and Proposition $\ref{17}$ and Lemma $\ref{18}$, it suffices to show that $\mathrm{dim}_E\mathrm{H}^1(G_K, \mathrm{ad}_{\mathcal{T}}(W))=d_n$. 
We prove this by induction on rank $n$ of $W$. 
When $n=1$, then $\mathrm{ad}_{\mathcal{T}}(W)=\mathrm{ad}(W)=B_E$, so the proposition 
follows from Proposition $\ref{i}$. Let $W$ be a rank $n$ trianguline $E$-$B$-pair as above, let $\mathcal{T}_{n-1}:0\subseteq 
W_{1}\subseteq \cdots \subseteq W_{n-2}\subseteq W_{n-1}$ be the triangulation of $W_{n-1}(\subseteq W)$. 
Then , by definition of $\mathrm{ad}_{\mathcal{T}}(W)$, for any $f\in \mathrm{ad}_{\mathcal{T}}(W)$, the restriction of $f$ 
to $W_{n-1}$ is an element of $\mathrm{ad}_{\mathcal{T}_{n-1}}(W_{n-1})$ and this defines a short exact sequence 
of $E$-$B$-pair,
\begin{equation*}
0\rightarrow \mathrm{Hom}(W(\delta_n), W)\rightarrow \mathrm{ad}_{\mathcal{T}}(W)\rightarrow 
\mathrm{ad}_{\mathcal{T}_{n-1}}(W_{n-1})\rightarrow 0
\end{equation*}
From this, we have $\mathrm{rank}(\mathrm{ad}_{\mathcal{T}}(W))=\mathrm{rank}(\mathrm{ad}_{\mathcal{T}_{n-1}}(W_{n-1}))
+n=1+2+\cdots +n=\frac{n(n+1)}{2}$.  From this and Theorem $\ref{h}$, 
it suffices to show that $\mathrm{H}^0(G_K,\mathrm{ad}_{\mathcal{T}}(W))=E$ and $\mathrm{H}^2(G_K,\mathrm{ad}_{\mathcal{T}}(W))=0$
. For $H^0$, this follows from  $E\subseteq \mathrm{H}^0(G_K, \mathrm{ad}_{\mathcal{T}}(W))\subseteq \mathrm{H}^0(G_K, 
\mathrm{ad}(W))=E$. We prove $\mathrm{H}^2(G_K, \mathrm{ad}_{\mathcal{T}}(W))=0$ by induction of rank of $W$. 
When $n=1$, this follows from Proposition $\ref{i}$. When $W$ is rank $n$, then from the above short exact sequence, we have the following 
 long exact sequence ,
 \begin{equation*}
 \cdots \rightarrow \mathrm{H}^2(G_K, \mathrm{Hom}(W(\delta_n), W))\rightarrow \mathrm{H}^2(G_K, \mathrm{ad}_{\mathcal{T}}(W))
 \rightarrow \mathrm{H}^2(G_K, \mathrm{ad}_{\mathcal{T}_{n-1}}(W_{n-1}))\rightarrow 0
\end{equation*}
Then, by Proposition $\ref{i}$ and by the assumption on $\{\delta_i\}_{i=1}^n$, we have $\mathrm{H}^2(G_K, \mathrm{Hom}\allowbreak (W(\delta_n), W))=0$. 
So, by induction hypothesis, we have $\mathrm{H}^2(G_K, \mathrm{ad}_{\mathcal{T}}(W))=0$. We finish the proof of this 
proposition.

\end{proof}

\section{Deformations of benign $B$-pairs}

In this final section, we study a very important class, called benign $B$-pairs, 
of trianguline $E$-$B$-pairs, which satisfies very good properties for trianguline 
deformations and plays a crucial role in Zariski density problem of modular Galois (or crystalline) 
representations in some deformation spaces of global (or local) $p$-adic representations.
This  class was defined by Kisin in the case of $K=\mathbb{Q}_p$ and rank $W$ is 2 in [Ki03] and [Ki08b].
He studied some deformation theoretic properties of this class in [Ki03] and used these in a crucial way in his 
proof of Zariski density of two dimensional crystalline representations of $G_{\mathbb{Q}_p}$.
Bella\"iche-Chenevier ([Bel-Ch09]) and Chenevier ([Ch08],[Ch09]) generalized this class for 
higher dimensional case and $K=\mathbb{Q}_p$ case, they defined and studied in detail
crystalline representations with non-critical refinements. In particular, Chenevier ([Ch08],[Ch09]) 
discovered and proved a cruicially important preoperty concerning to tangent spaces of these deformation 
rings. (This was discovered by Kisin ([Ki08b]) implicitly when $K=\mathbb{Q}_p$ and rank 2 case.) 
In fact, by using this property, Chenevier ([Ch08],[Ch09]) proved  many types of theorems concerning 
Zariski density of modular Galois 
representations in some deformation spaces of global $p$-adic representations. 

The aim of this section is to generalize this Chenevier's theorem to any $K$-case.

\subsection{Benign $B$-pairs}
Let $P(X)\in \mathcal{O}_K[X]$ be a polynomial such that $P(X)\equiv \pi_K X$ (mod deg $2$) 
and that $P(X)\equiv X^q$ (mod $\pi_K$), where $q:=p^f$ and $f:=[K_0:\mathbb{Q}_p]$. Then we take the Lubin-Tate's formal group 
$\mathcal{F}$ of $K$ such that $[\pi_K]=P(X)$, where $[-]:\mathcal{O}_K\isom \mathrm{End}(\mathcal{F})$.  We put $K_n$ is the Galois extension of $K$ 
generated by $[\pi_K^n]$-torsion points of $\mathcal{F}$ for any $n$, then we have a canonical isomorphism $\chi_{\mathrm{LT},n}:\mathrm{Gal}(K_n/K)\isom( \mathcal{O}_K^{\times}/
\pi^n\mathcal{O}_K)^{\times}$. We put $K_{\mathrm{LT}}:=\cup_{n=1}^{\infty}K_n$ and we put $G_n:=\mathrm{Gal}(K_n/K)$.

In [Ki08b] or [Ch09] etc, benign representation is defined as a special class of crystalline representations.
But, as we show in the sequel, we can easily generalize the main theorem to some potentially crystalline representations.
So, before defining benign representations, we first define the following class of potentially crystalline representations.

\begin{defn}
Let $W$ be an $E$-$B$pair. Then we say that $W$ is crystabelline if $W|_{G_{L}}$ is a crystalline $E$-$B$-pair of 
$G_{L}$ for a finite abel extension $L$ of $K$.
\end{defn}
\begin{rem}
Because a finite abel extension $L$ of $K$ is contained in $K_mL'$, where $L'$ is a finite unramified extension of $K$, by using Hilbert 90, we can easily show that $W$ is crystabelline if and only if 
$W|_{G_{K_m}}$ is crystalline for some $m\geqq 0$.
\end{rem}

Let $W$ be a crystabelline $E$-$B$-pair of rank $n$ such that $W|_{G_{K_m}}$ is crystalline for some $m$.
Let $D^{K_m}_{\mathrm{cris}}(W):=(B_{\mathrm{cris}}\otimes_{\mathbb{Q}_p}W)^{G_{K_m}}$ 
be the rank $n$ $E$-filtered ($\varphi,G_m$)-module over $K$. Then, because $K_m$ is totally ramified over $K$, this is 
a free $K_0\otimes_{\mathbb{Q}_p}E$-module of rank $n$. 
We take an embedding $\sigma:K_0\hookrightarrow \bar{E}$, this defines a map 
$\sigma:K_0\otimes_{\mathbb{Q}_p}E\rightarrow \bar{E}:x\otimes y\rightarrow \sigma(x)y$.
From this, we can define the $\sigma$-component  $D^{K_m}_{\mathrm{cris}}(W)_{\sigma}:=D^{K_m}_{\mathrm{cris}}(W)
\otimes_{K_0\otimes_{\mathbb{Q}_p}E, \sigma}\bar{E}$, this has an $\bar{E}$-linear 
$\varphi^f$-action and an $\bar{E}$-linear $G_m$-action. Let $\{\alpha_1,\cdots,\alpha_n\}$ be a solution in $\bar{E}$ 
(with multiplicities) of $\mathrm{det}_{\bar{E}}(T\cdot \mathrm{id}-\varphi ^f |_{D^{K_m}_{\mathrm{cris}}
(W)_{\sigma}})\in \bar{E}[T]$. Then, because $\varphi^f$ and $G_m$-action commute, so any generalized $\varphi^f$-eigenvector spaces 
 of $D^{K_m}_{\mathrm{cris}}(W)_{\sigma}$ are preserved by the action of $G_m$. 
 So we can take an $\bar{E}$-basis $e_{1,\sigma},\cdots,e_{n,\sigma}$ of $D^{K_m}_{\mathrm{cris}}(W)_{\sigma}$ such that, for any $i$,  $e_{i,\sigma}$ is a generalized eigenvector of 
 $\varphi^f$ with generalized eigenvalue $\alpha_i\in \bar{E}^{\times}$ and $G_m$ acts on $e_{i,\sigma}$ by a homomorphism $\tilde{\delta}_i:G_m\rightarrow \bar{E}^{\times}$.  We change numbering of $\{\alpha_1,\cdots,\alpha_n\}$ so that $e_{1,\sigma}, e_{2,\sigma},\cdots,e_{n,\sigma}$ gives a $\varphi^f$-Jordan decomposition of $D^{K_m}_{\mathrm{cris}}(W)_{\sigma}$ by this order. 
 Because $\{\sigma, \varphi^{-1}\sigma,\cdots, \varphi^{-(f-1)}\sigma\}=\mathrm{Hom}_{\mathbb{Q}_p}(K_0, \bar{E})$ and 
$\varphi^i:D^{K_m}_{\mathrm{cris}}(W)_{\sigma}\isom D^{K_m}_{\mathrm{cris}}(W)_{\varphi^{-i}\sigma}:x\otimes y\mapsto \varphi^i(x)\otimes y$ for any $x\in D^{K_m}_{\mathrm{cris}}(W)$ and $y\in \bar{E}$, is an $\bar{E}[\varphi^f,G_m]$-isomorphism, 
the set $\{\alpha_1,\cdots,\alpha_n\}$ doesn't depend on the choice of $\sigma:K_0\hookrightarrow \bar{E}$. 
Then, if we put $e_i:=e_{i,\sigma}+\varphi(e_{i,\sigma})+\cdots +\varphi^{f-1}(e_{i,\sigma})\in D^{K_m}_{\mathrm{cris}}(W)\otimes_{E}\bar{E}$, we have $D^{K_m}_{\mathrm{cris}}(W)\otimes_E\bar{E}=K_0\otimes_{\mathbb{Q}_p}\bar{E}e_1\oplus\cdots \oplus K_0\otimes_E\bar{E}e_n$ such that the subspace $K_0\otimes_{\mathbb{Q}_p}\bar{E}e_1\oplus\cdots\oplus K_0\otimes_{\mathbb{Q}_p}\bar{E}e_i$ is preserved by 
$\varphi$ and $G_m$-action for any $i$. Moreover, if we take a sufficiently large finite extension $E'$ of $E$, then we can
take $e_i\in D^{K_m}_{\mathrm{cris}}(W)\otimes_E E'$ such that $D^{K_m}_{\mathrm{cris}}(W)\otimes_E E'=
K_0\otimes_{\mathbb{Q}_p}E'e_1\oplus \cdots\oplus K_0\otimes_{\mathbb{Q}_p}E'e_n$ and $\alpha_i\in E'$ for any $i$ and $\tilde{\delta}_i:G_m\rightarrow E^{'\times}$.

By using these arguments, first we study a relation between crystabelline $E$-$B$-pairs and trianguline $E$-$B$-pairs.
\begin{lemma}\label{20.5}
Let $W$ be an $E$-$B$-pair of rank $n$. Then the following conditions are equivalent.
\begin{itemize}
\item[(1)]$W$ is crystabelline.
\item[(2)]$W$ is trianguline $($ i.e, there exists a finite extension $E'$ of $E$ such that $W\otimes_E E'$ is a
split trianguline $E'$-$B$-pair$)$ and potentially crystalline.
\end{itemize}
\end{lemma}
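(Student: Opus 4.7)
The plan is to prove the two directions separately, using Theorem \ref{e} as the central tool in both.

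For $(1) \Rightarrow (2)$, potentially crystalline is immediate (crystalline over an abelian $L$ is a special case). For split trianguline, I would apply the setup in the paragraph immediately preceding the Lemma statement: taking $m$ with $W|_{G_{K_m}}$ crystalline, the filtered $(\varphi,G_m)$-module $D^{K_m}_{\mathrm{cris}}(W)$ is free of rank $n$ over $K_0 \otimes_{\mathbb{Q}_p} E$, and after enlarging $E$ to some $E'$ containing the $\varphi^f$-eigenvalues $\alpha_i$ and the characters $\tilde{\delta}_i$ of $G_m$, the basis $e_1,\dots,e_n$ constructed there produces $\varphi$- and $G_m$-stable partial sums $K_0\otimes_{\mathbb{Q}_p}E'e_1\oplus\cdots\oplus K_0\otimes_{\mathbb{Q}_p}E'e_i$. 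Equipping each with the induced filtration from $K\otimes_{K_0}D^{K_m}_{\mathrm{cris}}(W)$, Theorem \ref{e} converts this filtration back into a chain of saturated sub $E'$-$B$-pairs of $W\otimes_E E'$ with rank one quotients, giving split triangulinity over $E'$.

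For $(2) \Rightarrow (1)$, the plan is an induction on rank using the triangulation. First I would handle the rank one case: a rank one potentially crystalline $W(\delta)$ must have $\delta = \left(\prod_\sigma \sigma(x)^{k_\sigma}\right)\delta_{\mathrm{sm}}$ with $\delta_{\mathrm{sm}}$ a smooth character (this follows from the rank one part of Theorem \ref{e} and the classification in Proposition \ref{5}), and $\delta_{\mathrm{sm}}$ factors through $\mathcal{O}_K^\times/(1+\pi_K^m\mathcal{O}_K)$ for some $m$; via Lubin--Tate reciprocity this makes $W(\delta)|_{G_{K_m}}$ crystalline. For the general case, I would first note that each graded piece $W_i/W_{i-1}$ is potentially crystalline (a saturated sub or quotient $E$-$B$-pair of the potentially crystalline $W$ is potentially crystalline via Theorem \ref{e}, since the category equivalence respects sub-objects in the category of filtered $(\varphi,G_{L/K})$-modules), hence crystabelline by the rank one case.

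The core of the argument is then the following descent: let $L/K$ be a finite Galois extension with $W|_{G_L}$ crystalline, and consider $D^L_{\mathrm{cris}}(W\otimes_E E')$ with its semi-linear $G_{L/K}$-action, which preserves the filtration coming from the triangulation and so has image in upper triangular matrices. The $n$ diagonal characters $\chi_i\colon G_{L/K}\to (E')^\times$ have finite order; restricted to inertia they correspond, via local class field theory, to finite order characters of $\mathcal{O}_K^\times$, each factoring through some $\mathcal{O}_K^\times/(1+\pi_K^{m_i}\mathcal{O}_K)$. Taking $m\geq \max_i m_i$ and replacing $L$ by $L\cdot K_m$, all $\chi_i$ become trivial on $I_{L/K_m}$, so the finite group $I_{L/K_m}$ acts on $D^L_{\mathrm{cris}}(W\otimes_E E')$ through upper unipotent matrices; but a finite unipotent subgroup of $\mathrm{GL}_n$ in characteristic zero is trivial, so $I_{L/K_m}$ acts trivially. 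By Galois descent (Hilbert 90), $D^{K_m}_{\mathrm{cris}}(W)$ attains full dimension $n[E:\mathbb{Q}_p]$ over $K_0$, so $W|_{G_{K_m}}$ is crystalline and $W$ is crystabelline by the remark. The main obstacle will be this last descent step: arranging the characters to vanish on inertia after restricting to $G_{K_m}$, and then leveraging the ``finite $\cap$ unipotent $=$ trivial'' principle to upgrade the potentially crystalline hypothesis to crystallinity over an \emph{abelian} extension.
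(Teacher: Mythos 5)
Your proof of $(1)\Rightarrow(2)$ is the same as the paper's: eigen-decompose $D^{K_m}_{\mathrm{cris}}(W)\otimes_E E'$ after enlarging the coefficient field and feed the $(\varphi,G_m)$-stable partial flags into Theorem \ref{e}. For $(2)\Rightarrow(1)$ both arguments reduce to the rank one case on graded pieces (the paper quotes Lemma 4.1 of [Na09] where you re-derive it from Theorem \ref{e} and the rank one classification), but the descent mechanism is genuinely different. The paper inducts on the rank: writing $[W]\in \mathrm{H}^1(G_K, W_{n-1}(\delta_n^{-1}))$, it shows that crystallinity of the extension over $G_L$ forces crystallinity over $G_{K_m}$ because the inflation--restriction kernel $\mathrm{H}^1(\mathrm{Gal}(L/K_m), D^L_{\mathrm{cris}}(W_{n-1}(\delta_n^{-1})))$ vanishes (finite group, $\mathbb{Q}_p$-vector space coefficients). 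You instead treat the whole flag at once inside $D^L_{\mathrm{cris}}(W\otimes_E E')$ and use the fact that a finite unipotent subgroup of $\mathrm{GL}_n$ in characteristic zero is trivial; these are two faces of the same ``finite group, characteristic zero'' principle, and your version has the mild advantage of making the abelian descent completely explicit while avoiding the cohomological bookkeeping. Two points of care in your version: the ``diagonal characters'' are only honest characters on $I_{L/K}$ (the full $G_{L/K}$-action is merely semilinear, so its diagonal entries form a cocycle), and the cleanest way to see that they die on $I_{L/K_m}$ is not through class field theory applied to $I_{L/K}$ directly (a character of the inertia subgroup need not extend to $G_{L/K}^{\mathrm{ab}}$) but simply from the already-established rank one case: since $(W_i/W_{i-1})|_{G_{K_{m_i}}}$ is crystalline, the $\mathrm{Gal}(L/K_m)$-invariants of $D^L_{\mathrm{cris}}(W_i/W_{i-1})$ generate it over $L_0$, which forces the $L_0$-linear $I_{L/K_m}$-action on this rank one module to be trivial. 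With that adjustment your argument closes correctly.
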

\begin{proof}
First we assume that $W$ is crystabelline. Then, by the above argument, for a sufficiently large finite extension 
$E'$ of $E$, we have $D^{K_m}_{\mathrm{cris}}(W)\otimes_E E'=K_0\otimes_{\mathbb{Q}_p}E'e_1\oplus 
\cdots \oplus K_0\otimes_{\mathbb{Q}_p}E'e_n$ as above. Then, for any $i$, $K_0\otimes_{\mathbb{Q}_p}E'e_1
\oplus \cdots\oplus K_0\otimes_{\mathbb{Q}_p}E'e_i$ is a sub $E'$-filtered ($\varphi,G_m$)-module 
of $D^{K_m}_{\mathrm{cris}}(W\otimes_EE')$. So, by Theorem $\ref{e}$, $W\otimes_E E'$ is 
split trianguline and potentially crystalline.

Next we assume that $W$ is trianguline and potentially crystalline. By extending coefficient, we may assume that $W$ is split trianguline. 
We take a triangulation $0\subseteq W_1\subseteq \cdots \subseteq W_n=W$ of $W$. 
Then, because sub objects and quotients of crystalline $B$-pairs are again crystalline, $W_i$ and $W_i/W_{i-1}$ are 
all potentially crystalline. Then, by Lemma 4.1 of [Na09], for sufficiently large $m$, $W_i/W_{i-1}|_{G_m}$ is crystalline. Then we claim that $W|_{G_m}$ is also crystalline. 
We prove this claim by induction on rank $n$ of $W$. When $n=1$, this is trivial. 
We assume that the claim is proved for $n-1$ case. Then $W_{n-1}|_{G_m}$ is crystalline.
If we put $W/W_{n-1}\isom W(\delta_n)$, we have $[W]\in \mathrm{H}^1(G_K, W_{n-1}(\delta_n^{-1}))$. By assumption, 
there exists a finite Galois extension $L$ of $K_m$ such that $[W]$ is contained in 
$\mathrm{Ker}(\mathrm{H}^1(G_K, W_{n-1}(\delta_n^{-1}))\rightarrow \mathrm{H}^1(G_L, B_{\mathrm{cris}}\otimes_{B_e}
(W_{n-1}(\delta_n^{-1}))_e))$.  Then  it suffices to prove that the natural map $\mathrm{H}^1(G_{K_m}, B_{\mathrm{cris}}\otimes_{B_e}(W_{n-1}(\delta_n^{-1}))_e)\rightarrow \mathrm{H}^1(G_L, B_{\mathrm{cris}}\otimes_{B_e}(W_{n-1}(\delta_n^{-1}))_e)$ is injective. By inflation 
 restriction sequence, the kernel of this map is $\mathrm{H}^1(\mathrm{Gal}(L/K_m), D^L_{\mathrm{cris}}(W_{n-1}(\delta_n^{-1})))=0$. 
 So $W|_{G_m}$ is crystalline, i.e. $W$ is crystabelline.

\end{proof}

From here, we consider a crystabelline $E$-$B$-pair $W$ such that 
$D^{K_m}_{\mathrm{cris}}(W)\isom K_0\otimes_{\mathbb{Q}_p}Ee_1\oplus \cdots \oplus 
K_0\otimes_{\mathbb{Q}_p}Ee_n$ such that 
$K_0\otimes_{\mathbb{Q}_p}Ee_i$ is preserved by ($\varphi,G_m$) and $\varphi^f(e_i)=\alpha_ie_i$ for 
some $\alpha_i\in E^{\times}$ such that $\alpha_i\not= \alpha_j$ for any $i\not= j$. 
Let $\mathfrak{S}_n$ be the $n$-th permutation group.
Then, for any $\tau\in \mathfrak{S}_n$, there is a filtration by $E$-filtered ($\varphi,G_m$)-modules on $D^{K_m}_{\mathrm{cris}}(W)$ ,
$\mathcal{F}_{\tau}:0\subseteq F_{\tau,1}\subseteq \cdots\subseteq F_{\tau, n-1}
\subseteq F_{\tau,n}=D^{K_m}_{\mathrm{cris}}(W)$  such that 
$F_{\tau,i}:=K_0\otimes_{\mathbb{Q}_p}Ee_{\tau(1)}\oplus \cdots\oplus K_0\otimes_{\mathbb{Q}_p}Ee_{\tau(i)}$ for any $1\leqq i \leqq n$
( the filtration on $F_{\tau,i}$ is induced from $D^{K_m}_{\mathrm{cris}}(W)$).
We put $\mathrm{gr}_{\tau,i}D^{K_m}_{\mathrm{cris}}(W):=F_{\tau,i}/F_{\tau,i-1}$ for any 
$1\leqq i\leqq n$. Then, by Theorem $\ref{e}$, there exists a 
filtration $\mathcal{T}_{\tau}:0\subseteq W_{\tau,1}\subseteq\cdots \subseteq W_{\tau,n-1}\subseteq W_{\tau,n}=W$ 
such that $W_{\tau,i}|_{G_m}$ is crystalline and $D^{K_m}_{\mathrm{cris}}(W_{\tau,i})=F_{\tau,i}$. 
Then $W_{\tau,i}/W_{\tau,i-1}$ is a rank one crystabelline $E$-$B$-pair such that 
$D^{K_m}_{\mathrm{cris}}(W_{\tau,i}/W_{\tau,i-1})\isom \mathrm{gr}_{\tau,i}D^{K_m}_{\mathrm{cris}}(W)$. So, by Lemma 4.1 of [Na09] and by its proof, 
there exists $\{k_{(\tau,i),\sigma}\}_{\sigma\in\mathcal{P}}\in\prod_{\sigma\in \mathcal{P}}\mathbb{Z}$ and 
homomorphisms $\tilde{\delta}_{i}:K^{\times}\rightarrow E^{\times}$  satisfying $\tilde{\delta}_{i}|_{1+\pi_K^m\mathcal{O}_K}=1$ and $\tilde{\delta}_{i}(\pi_K)=1$, 
such that 
$W_{\tau,i}/W_{\tau,i-1}\isom W(\delta_{\alpha_{\tau(i)}}\tilde{\delta}_{\tau(i)}\prod_{\sigma\in\mathcal{P}}\sigma(x)^{k_{(\tau,i),\sigma}})$ 
for any $1\leqq i\leqq n$, where $\delta_{\alpha_i}:K^{\times}\rightarrow E^{\times}$ is the homomorphism such that 
$\delta_{\alpha_i}|_{\mathcal{O}_K^{\times}}=1$ and $\delta_{\alpha_i}(\pi_K)=\alpha_i$. Then, for any $\sigma\in\mathcal{P}$, the set $\{k_{(\tau,1),\sigma},k_{(\tau,2),\sigma},\cdots,k_{(\tau,n),\sigma}\}$ is independent of $\tau\in \mathfrak{S}_n$  because these numbers are 
$\sigma$-part of Hodge-Tate weights of $W$. We write this set (with multiplicities) by $\{k_{1,\sigma},k_{2,\sigma},\cdots, k_{n,\sigma}\}$ 
such that $k_{1,\sigma}\geqq k_{2,\sigma}\geqq\cdots\geqq k_{n,\sigma}$ for any $\sigma\in\mathcal{P}$. Under these notations, we define the notion of benign $E$-$B$-pair as follows.
\begin{defn}\label{26}
Let $W$ be a rank $n$ crystabelline $E$-$B$-pair as above. Then 
we say that $W$ is a benign $E$-$B$-pair if the following conditions hold:
\begin{itemize}
\item[(1)] For any $i\not= j$, we have $\alpha_i/\alpha_j\not=1, p^f, p^{-f}$.
\item[(2)] For any $\sigma\in\mathcal{P}$, we have $0=k_{1,\sigma}>k_{2,\sigma}>\cdots >k_{n-1,\sigma}>k_{n,\sigma}$.
\item[(3)] For any $\tau\in \mathfrak{S}_n$ and $\sigma\in \mathcal{P}$, we have 
$k_{(\tau,i),\sigma}=k_{i,\sigma}$ for any $1\leqq i\leqq n$.
\end{itemize}
We say that $W$ is twisted benign if there exists a benign $E$-$B$-pair $W'$ and 
$\{k_{\sigma}\}_{\sigma\in\mathcal{P}}\in \prod_{\sigma\in \mathcal{P}}\mathbb{Z}$ 
such that $W\isom W'\otimes W(\prod_{\sigma\in\mathcal{P}}\sigma(x)^{k_{\sigma}})$.
(i.e. we drop the condition $k_{1,\sigma}=0$ in (2)).
\end{defn}
\begin{rem}
By definition, if $W$ is a twisted benign, then we have $W_{\tau,i}/W_{\tau,i-1}
\isom W(\delta_{\alpha_{\tau(i)}}\tilde{\delta}_{\tau(i)}\prod_{\sigma\in\mathcal{P}}\sigma(x)^{k_{i,\sigma}})$ 
for any $\tau\in\mathfrak{S}_n$ and $1\leqq i\leqq n$.
\end{rem}
\begin{rem}
When $n=2$ and $K=\mathbb{Q}_p$ and $W$ is crystalline, this definition is same as that of Kisin in [Ki08b].
When $n=2$ and $K=\mathbb{Q}_p$, the condition (3) is induced from the other weaker condition that $\mathrm{End}_{E[G_K]}(V)=E$.
For the tangent space theorem (so for the application to Zariski density), the author thinks that 
this condition (3) is the most important and subtle condition to verify.
But, in general case, i.e. $n\geqq 3$ and $K=\mathbb{Q}_p$ case, or $n\geqq 2$ and $K\not=\mathbb{Q}_p$ case, it seems difficult to induce 
the condition (3) from some other weaker conditions. In [Ch09], Chenevier calls the condition (3) to be 
 "generic".
\end{rem}
\begin{lemma}\label{21}
Let $W$ be a twisted benign $E$-$B$-pair.
If $W_1$ is a saturated sub $E$-$B$-pair of $W$, then 
$W_1$ and $W/W_1$ are also twisted benign $E$-$B$-pairs.
\end{lemma}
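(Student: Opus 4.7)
The strategy is to transport the question to the filtered $(\varphi, G_m)$-module side via Theorem $\ref{e}$, where the hypothesis of distinct Frobenius eigenvalues forces any sub-object to be a direct sum of the canonical eigen-lines, and then to translate the benign conditions back through this identification. First I would reduce to the benign case: the twist by some $W(\prod_\sigma \sigma(x)^{k_\sigma})$ commutes with taking saturated sub and quotient and preserves all three conditions of Definition $\ref{26}$, so we may assume $W$ itself is benign. Write $D := D^{K_m}_{\mathrm{cris}}(W) = \bigoplus_{i=1}^n K_0 \otimes_{\mathbb{Q}_p} E e_i$ as in the discussion preceding Definition $\ref{26}$. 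Since $W_1$ is saturated in $W$, the restriction $W_1|_{G_{K_m}}$ is a saturated sub of the crystalline $B$-pair $W|_{G_{K_m}}$, hence crystalline, so $W_1$ is crystabelline and $D_1 := D^{K_m}_{\mathrm{cris}}(W_1)$ is a sub $E$-filtered $(\varphi, G_m)$-module of $D$.

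The crucial step is that $D_1$ must be of the form $D_J := \bigoplus_{j \in J} K_0 \otimes_{\mathbb{Q}_p} E e_j$ for a unique subset $J \subseteq \{1, \ldots, n\}$: indeed, since the $\alpha_i$ are pairwise distinct in $E^\times$ by condition (1), the $K_0 \otimes E e_i$ are precisely the $\alpha_i$-eigenspaces of the $K_0 \otimes E$-linear endomorphism $\varphi^f$ of $D$, so any $\varphi^f$-stable $K_0 \otimes E$-submodule decomposes along this eigenspace decomposition, and $\varphi$-stability implies $\varphi^f$-stability. Writing $r := |J|$ and choosing any $\tau \in \mathfrak{S}_n$ with $\{\tau(1), \ldots, \tau(r)\} = J$, we have $F_{\tau, r} = D_J = D_1$ as sub $E$-$(\varphi, G_m)$-modules of $D$ with the same induced filtration on $L \otimes D_1$. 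By Theorem $\ref{e}$ both $W_1$ and $W_{\tau, r}$ are saturated sub $E$-$B$-pairs of $W$ whose images under $D^{K_m}_{\mathrm{cris}}$ coincide, so $W_1 = W_{\tau, r}$. Condition (3) of the benign hypothesis then says that the Hodge--Tate weights of $W_1$ are $\{k_{1,\sigma}, \ldots, k_{r,\sigma}\}_{\sigma \in \mathcal{P}}$, which are strictly decreasing; condition (1) for $W_1$ is inherited from $\{\alpha_j\}_{j \in J} \subseteq \{\alpha_1, \ldots, \alpha_n\}$. For condition (3) applied to $W_1$, any permutation $\tau'$ of $J$ extends to a $\tau \in \mathfrak{S}_n$ with $\tau|_{\{1, \ldots, r\}} = \tau'$, and then the $i$-th step of the corresponding triangulation of $W_1$ coincides with $W_{\tau, i}$, whose Hodge--Tate weights are $\{k_{1,\sigma}, \ldots, k_{i,\sigma}\}$ by condition (3) for $W$.

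For $W/W_1$, the complementary decomposition $D^{K_m}_{\mathrm{cris}}(W/W_1) = \bigoplus_{j \notin J} K_0 \otimes_{\mathbb{Q}_p} E e_j$ shows it is crystabelline with Frobenius eigenvalues $\{\alpha_j\}_{j \notin J}$ (so (1) is inherited) and, by subtracting from the weights of $W$ those of $W_1$, Hodge--Tate weights $\{k_{r+1, \sigma}, \ldots, k_{n, \sigma}\}_{\sigma}$ (so (2) is inherited). For (3), given a permutation $\tau''$ of $\{1, \ldots, n\} \setminus J$, extend $\tau''$ to $\tau \in \mathfrak{S}_n$ which sends $\{1, \ldots, r\}$ bijectively onto $J$ and carries $r + k \mapsto \tau''(k)$ for $1 \le k \le n-r$; then $W_{\tau, r} = W_1$ and $W_{\tau, r+i}/W_1$ is the $i$-th step of the corresponding triangulation of $W/W_1$. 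By (3) for $W$, $W_{\tau, r+i}$ has Hodge--Tate weights $\{k_{1,\sigma}, \ldots, k_{r+i, \sigma}\}$, whence $W_{\tau, r+i}/W_1$ has weights $\{k_{r+1, \sigma}, \ldots, k_{r+i, \sigma}\}$, which is exactly the top $i$ among the weights of $W/W_1$. The main point is the eigenspace decomposition in the second paragraph, which uses the non-degeneracy in condition (1); everything afterwards is bookkeeping with permutations, extending them from $J$ or its complement to all of $\{1,\ldots,n\}$, combined with Theorem $\ref{e}$.
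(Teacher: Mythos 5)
Your proof is correct and is essentially the detailed version of the paper's own (very terse) argument: the paper asserts the lemma "follows from the definition and the fact that sub objects and quotients of crystabelline $E$-$B$-pairs are crystabelline," and what you supply is precisely the missing content — that by the distinctness of the $\alpha_i$ from condition (1), $D^{K_m}_{\mathrm{cris}}(W_1)$ must be a direct sum of the canonical $\varphi^f$-eigen-lines $K_0\otimes_{\mathbb{Q}_p}Ee_j$ (indexed by some $J$), which identifies $W_1$ with some $W_{\tau,r}$ and lets you read off conditions (1)--(3) for $W_1$ and $W/W_1$ by the bookkeeping with permutations that you describe.
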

\begin{proof}
This follows from the definition and the fact that 
sub objects and quotients of crystabelline $E$-$B$-pairs 
are crystabelline $E$-$B$-pairs.
\end{proof}

\subsection{Deformations of benign $B$-pairs}

\begin{lemma}\label{22}
Let $W$ be a twisted benign $E$-$B$-pair of rank $n$. 
Then $W$ satisfies $\mathrm{End}_{G_K}(W)=E$ and 
for any $\tau\in \mathfrak{S}_n$, $\mathcal{T}_{\tau}:0\subseteq W_{\tau,1}\subseteq 
\cdots\subseteq W_{\tau,n-1}\subseteq W_{\tau,n}=W$ satisfies all the condition 
in Proposition $\ref{19}$. 
\end{lemma}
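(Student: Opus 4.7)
My plan is to handle the two assertions separately, reducing both to statements about the $E$-filtered $(\varphi,G_m)$-module $D^{K_m}_{\mathrm{cris}}(W)$ via the equivalence of Theorem~\ref{e}, and using the explicit description $\delta_{\tau,i}=\delta_{\alpha_{\tau(i)}}\tilde\delta_{\tau(i)}\prod_{\sigma\in\mathcal{P}}\sigma(x)^{k_{i,\sigma}}$ of the parameter of $\mathcal{T}_\tau$ furnished by the remark after Definition~\ref{26} (valid for both the benign and twisted benign cases).

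For the parameter conditions of Proposition~\ref{19}, fix $1\le i<j\le n$ and suppose $\delta_{\tau,j}/\delta_{\tau,i}=\prod_\sigma\sigma(x)^{k_\sigma}$. Restricting to $1+\pi_K^m\mathcal{O}_K$ kills the $\tilde\delta$ and $\delta_\alpha$ contributions, and using that the embeddings $\sigma\in\mathcal{P}$ are $E$-linearly independent as $\mathbb{Q}_p$-linear maps $K\to E$, one forces $k_\sigma=k_{j,\sigma}-k_{i,\sigma}$ for every $\sigma$. The residual equality evaluated at $\pi_K$ then reads $\alpha_{\tau(j)}/\alpha_{\tau(i)}=1$, contradicting condition~(1). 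The same recipe applied to $\delta_{\tau,i}/\delta_{\tau,j}=|N_{K/\mathbb{Q}_p}(x)|\prod_\sigma\sigma(x)^{k_\sigma}$ (noting $|N_{K/\mathbb{Q}_p}|=1$ on $\mathcal{O}_K^\times$ and $|N_{K/\mathbb{Q}_p}(\pi_K)|=p^{-f}$) forces $\alpha_{\tau(i)}/\alpha_{\tau(j)}=p^{-f}$, again contradicting condition~(1). Condition~(2) guarantees that the $k_\sigma$ obtained this way automatically lie in the required ranges $\mathbb{Z}_{\le 0}$ and $\mathbb{Z}_{\ge 1}$, so the two exclusion conditions of Proposition~\ref{19} hold.

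For $\mathrm{End}_{G_K}(W)=E$, Theorem~\ref{e}(1) identifies such endomorphisms with filtration-preserving $(\varphi,G_m)$-equivariant $K_0\otimes_{\mathbb{Q}_p} E$-linear endomorphisms of $D^{K_m}_{\mathrm{cris}}(W)=\bigoplus_i K_0\otimes_{\mathbb{Q}_p} E\,e_i$. Passing to each $\sigma$-component, the pairwise distinct $\varphi^f$-eigenvalues $\alpha_i\in E^\times$ (condition~(1)) make any such $f$ diagonal, $f(e_{i,\sigma})=c_{i,\sigma}e_{i,\sigma}$; commuting $f$ with $\varphi$, which cyclically permutes the $\sigma$-components $e_{i,\sigma}\mapsto e_{i,\sigma\circ\varphi^{-1}}$ while fixing scalars in $E$, forces $c_{i,\sigma}$ to be independent of $\sigma$, giving scalars $c_i\in E$.

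The main obstacle is to show all $c_i$ coincide, and this is where condition~(3) is essential. Suppose for contradiction $f$ has two distinct eigenvalues $\lambda_1\ne\lambda_2$, and let $S_\ell:=\{i:c_i=\lambda_\ell\}$ and $V_\ell:=\bigoplus_{i\in S_\ell}K_0\otimes_{\mathbb{Q}_p} E\,e_i$. Each $V_\ell$ is a $(\varphi,G_m)$-stable summand; since $f$ preserves the filtration with distinct eigenvalues on $V_1,V_2$, the filtration on $K_m\otimes_{K_0}D^{K_m}_{\mathrm{cris}}(W)$ decomposes along this splitting (from $v=v_1+v_2\in\mathrm{Fil}^r$ one deduces $(\lambda_2-\lambda_1)v_1=\lambda_2 v-f(v)\in\mathrm{Fil}^r$, hence $v_1,v_2\in\mathrm{Fil}^r$). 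Picking any $\tau_\ell\in\mathfrak{S}_n$ with $\{\tau_\ell(1),\dots,\tau_\ell(|S_\ell|)\}=S_\ell$ identifies $V_\ell$ with $F_{\tau_\ell,|S_\ell|}$ as $E$-filtered $(\varphi,G_m)$-submodules, so condition~(3) identifies the Hodge--Tate weights of $V_\ell$ with the top $|S_\ell|$ weights $\{k_{1,\sigma},\dots,k_{|S_\ell|,\sigma}\}_{\sigma\in\mathcal{P}}$. Since $|S_1|,|S_2|\ge 1$, both $V_1$ and $V_2$ account for the top weight $k_{1,\sigma}$ in every $\sigma$-component, contradicting the multiplicity-one statement in the multiset equality $\{k_{1,\sigma},\dots,k_{n,\sigma}\}=\{\mathrm{HT}(V_1)\}\sqcup\{\mathrm{HT}(V_2)\}$ that follows from the strict inequalities of condition~(2). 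Hence $f$ is a scalar, completing the proof.
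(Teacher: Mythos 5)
Your proof is correct, but the argument for $\mathrm{End}_{G_K}(W)=E$ takes a genuinely different route from the paper's. The paper works on the $B$-pair side by induction on the rank: for a nonzero endomorphism $f$, the vanishing of $\mathrm{Hom}_{G_K}(W_{\tau,n-1},W/W_{\tau,n-1})$ and $\mathrm{Hom}_{G_K}(W/W_{\tau,n-1},W_{\tau,n-1})$ (Proposition \ref{i} plus condition (1)) forces $f(W_{\tau,n-1})\subseteq W_{\tau,n-1}$, the inductive hypothesis makes $f|_{W_{\tau,n-1}}=a\cdot\mathrm{id}$, and if $f-a\cdot\mathrm{id}\neq 0$ the extension $0\to W_{\tau,n-1}\to W\to W/W_{\tau,n-1}\to 0$ splits, yielding a $\tau'$ with $W_{\tau',1}=W/W_{\tau,n-1}$ of Hodge--Tate weight $k_{n,\sigma}$ instead of $k_{1,\sigma}$, contradicting condition (3). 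You instead transport everything to the $E$-filtered $(\varphi,G_m)$-module via Theorem \ref{e} and argue by pure linear algebra: distinctness of the $\alpha_i$ diagonalizes $f$, $\varphi$-equivariance makes its eigenvalues independent of $\sigma$, and a non-scalar $f$ would split the filtration along a nontrivial $(\varphi,G_m)$-stable decomposition whose pieces each carry the top weight $k_{1,\sigma}$ by condition (3), contradicting its multiplicity one from condition (2). Both proofs hinge on the same mechanism (a splitting is incompatible with condition (3)), but yours avoids the induction and the cohomological input of Proposition \ref{i}, and it makes visible exactly where each of conditions (1)--(3) enters; the paper's version, by contrast, reuses machinery ($\mathrm{H}^0$-vanishing of rank one twists) that is needed elsewhere anyway. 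Two small points to tidy: your explicit verification that the filtration decomposes is written for exactly two eigenvalues of $f$, so either take $S_2$ to be the full complement of $S_1$ and use the Lagrange projectors $\prod_{k\neq\ell}(f-\lambda_k)/(\lambda_\ell-\lambda_k)$ (polynomials in $f$, hence preserving every $f$-stable subspace such as $\mathrm{Fil}^r$), or decompose over all eigenvalues at once; and your appeal to condition (2) in the parameter check is not needed for the contradiction (condition (1) alone suffices, which is why the paper calls that part trivial).
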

\begin{proof}
We prove $\mathrm{End}_{G_K}(W)=E$, other statements are trivial from 
(1) of  Definition $\ref{26}$. We prove this by induction on $n$, the rank of $W$. 
If  $n=1$, $\mathrm{End}_{G_K}(W)=\mathrm{H}^0(G_K, B_E)=E$.
We assume that the lemma is proved for rank $n-1$ case, let 
$W$ be a rank $n$ twisted benign $E$-$B$-pair. 
We take an element  $\tau\in\mathfrak{S}_n$ and consider the filtration 
$\mathcal{T}_{\tau}:0\subseteq W_{\tau,1}\subseteq\cdots \subseteq W_{\tau,n-1}\subseteq W_{\tau,n}=W$.
Then, by the above lemma, $W_{\tau,n-1}$ is twisted benign of rank $n-1$, so by induction hypothesis, 
we have $\mathrm{End}_{G_K}(W_{\tau,n-1})=E$.  
Let $f:W\rightarrow W$ be a non-zero morphism of $E$-$B$-pairs. Then, by definition of twisted benign $B$-pairs 
and by Proposition $\ref{i}$, we have $\mathrm{Hom}_{G_K}(W_{\tau,n-1}, W/W_{\tau,n-1})=0$. So $f(W_{\tau,n-1})
\subseteq W_{\tau,n-1}$. Because $\mathrm{End}_{G_K}(W_{\tau,n-1})=E$, we have $f|_{W_{\tau,n-1}}=a\cdot\mathrm{id}_{W_{\tau,n-1}}$ 
for some $a\in E$. If $a=0$, then $f:W\rightarrow W$ factors through a non-zero morphism $f':W/W_{\tau,n-1}\rightarrow W$. 
Because $\mathrm{Hom}_{G_K}(W/W_{\tau,n-1}, W_{\tau,n-1})=0$ by definition and by Proposition $\ref{i}$, 
the natural map $\mathrm{Hom}_{G_K}(W/W_{\tau,n-1},W)\hookrightarrow \mathrm{Hom}_{G_K}(W/W_{\tau,n-1}, W/W_{\tau,n-1})=E$
is injective, so the composition of $f'$ with the natural projection $W\rightarrow W/W_{\tau,n-1}$ induces  an isomorphism 
$W/W_{\tau,n-1}\isom W/W_{\tau,n-1}$. This implies that the short exact sequence 
$0\rightarrow W_{\tau,n-1}\rightarrow W\rightarrow W/W_{\tau,n-1}\rightarrow 0$ splits. 
Then we can choose a $\tau'\in\mathfrak{S}_n$, such that $W_{\tau',1}=W/W_{\tau,n-1}$ , and 
this $\tau'$ doesn't satisfy the condition (3) in the definition of benign $B$-pairs. It's contradiction. 
So the above $a$ must not be zero. In this case, consider the map $f-a\cdot\mathrm{id}_W\in \mathrm{End}_{G_K}(W)$. 
Then the same argument as above implies that $f=a\cdot\mathrm{id}_W$. So $\mathrm{End}_{G_K}(W)=E$.
\end{proof}
\begin{corollary}\label{23}
Let $W$ be a twisted benign $E$-$B$-pair of rank $n$. 
Then the functor $D_{W}$ is pro-representable by $R_W$ which is 
formally smooth of dimension $n^2[K:\mathbb{Q}_p]+1$. 
For any $\tau\in\mathfrak{S}_n$, the functor $D_{W,\mathcal{T}_{\tau}}$ is pro-representable 
by a quotient $R_{W,\mathcal{T}_{\tau}}$ of $R_W$ which is formally smooth 
of dimension $\frac{n(n+1)}{2}[K:\mathbb{Q}_p]+1$.
\end{corollary}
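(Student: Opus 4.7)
The plan is to assemble the statement from two results already established: Corollary~\ref{13} for the ambient deformation ring $R_W$, and Proposition~\ref{19} for each trianguline deformation ring $R_{W,\mathcal{T}_\tau}$. The trianguline half is essentially immediate: Lemma~\ref{22} asserts precisely the two hypotheses of Proposition~\ref{19} (namely $\mathrm{End}_{G_K}(W)=E$, together with the twin non-congruence conditions on the parameter of each $\mathcal{T}_\tau$), so applying that proposition directly to $(W,\mathcal{T}_\tau)$ yields $R_{W,\mathcal{T}_\tau}$ as a quotient of $R_W$ isomorphic to $E[[T_1,\ldots,T_{d_n}]]$ with $d_n=\tfrac{n(n+1)}{2}[K:\mathbb{Q}_p]+1$.

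The ambient half takes slightly more work. Lemma~\ref{22} again supplies $\mathrm{End}_{G_K}(W)=E$, so by Corollary~\ref{13} it suffices to prove $\mathrm{H}^2(G_K,\mathrm{ad}(W))=0$. To do this I would fix one triangulation, say $\mathcal{T}_{\mathrm{id}}$ with parameter $\{\delta_i\}_{i=1}^n$, and use the induced filtration on $\mathrm{ad}(W)\isom W^{\vee}\otimes W$ whose successive graded pieces are the rank-one $E$-$B$-pairs $W(\delta_i/\delta_j)$ for $1\le i,j\le n$. By the long exact cohomology sequence, the vanishing reduces to $\mathrm{H}^2(G_K,W(\delta_i/\delta_j))=0$ for every pair, and by Proposition~\ref{i} the latter reduces to checking that $\delta_i/\delta_j$ is never of the form $|N_{K/\mathbb{Q}_p}(x)|\prod_{\sigma\in\mathcal{P}}\sigma(x)^{k_\sigma}$ with all $k_\sigma\ge 1$.

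The expected main obstacle is this case analysis, but it splits cleanly by the order of $(i,j)$. For $i=j$ the ratio is the trivial character and nothing is to check. For $i<j$ the required non-equality is precisely one of the parameter conditions of Proposition~\ref{19} provided by Lemma~\ref{22}. For $j<i$ I would argue directly from Definition~\ref{26}: using condition~(3) to write each $\delta_k=\delta_{\alpha_k}\tilde\delta_k\prod_\sigma\sigma(x)^{k_{k,\sigma}}$, a putative equality restricted to $\mathcal{O}_K^{\times}$ gives $\tilde\delta_i/\tilde\delta_j=\prod_\sigma\sigma^{k_\sigma-k_{i,\sigma}+k_{j,\sigma}}$ on units; the left side has finite order while the right has infinite order unless every exponent vanishes, so $k_\sigma=k_{i,\sigma}-k_{j,\sigma}$ for all $\sigma$, which by condition~(2) is negative when $j<i$, contradicting $k_\sigma\ge 1$. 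The outer twist by $\prod_\sigma\sigma(x)^{k_\sigma}$ distinguishing twisted benign from benign cancels in every ratio $\delta_i/\delta_j$, so the argument is unchanged in the twisted case. Once the $\mathrm{H}^2$-vanishing is established, Corollary~\ref{13} delivers $R_W\isom E[[T_1,\ldots,T_d]]$ with $d=n^2[K:\mathbb{Q}_p]+1$, completing the proof.
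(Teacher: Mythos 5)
Your proof is correct, and it does more than the paper's one-line proof (``This follows from Proposition~\ref{19}''). The trianguline half of your argument is exactly the paper's route: Lemma~\ref{22} supplies the hypotheses of Proposition~\ref{19}, which then delivers each $R_{W,\mathcal{T}_\tau}$. For the ambient ring $R_W$, however, Corollary~\ref{13} requires $\mathrm{H}^2(G_K,\mathrm{ad}(W))=0$, and the proof of Proposition~\ref{19} only establishes the weaker vanishing $\mathrm{H}^2(G_K,\mathrm{ad}_{\mathcal{T}}(W))=0$ for the filtration-preserving sub-object $\mathrm{ad}_{\mathcal{T}}(W)\subseteq \mathrm{ad}(W)$. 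Your explicit check---filtering $\mathrm{ad}(W)$ into rank-one pieces $W(\delta_i/\delta_j)$, invoking Proposition~\ref{i}, and splitting by sign: $i<j$ is one of the Lemma~\ref{22} parameter conditions, $i=j$ is trivial, and $i>j$ is ruled out by comparing Hodge--Tate weights after restriction to $\mathcal{O}_K^{\times}$ using conditions~(2) and~(3) of Definition~\ref{26} together with the finite order of $\tilde\delta_i/\tilde\delta_j$ on units---is the correct way to close this, and it supplies a verification the paper leaves implicit. The observation that the outer twist defining ``twisted'' benign cancels in every ratio $\delta_i/\delta_j$ is also needed and you handle it cleanly.
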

\begin{proof}
This follows from Proposition $\ref{19}$.
\end{proof}

Next, we want to consider the relation between $R_{W}$ and $R_{W,\mathcal{T}_{\tau}}$ for 
all $\tau\in \mathfrak{S}_n$. 
In particular, we want to compare the tangent space of $R_{W}$ and the sum of tangent spaces 
of $R_{W,\mathcal{T}_{\tau}}$ for all $\tau\in \mathfrak{S}_n$. 
For this, first we need to recall  the potentially crystalline  deformation functor.
\begin{defn}
Let $W$ be a potentially crystalline $E$-$B$-pair. 
Then we define potentially crystalline deformation functor $D_{W}^{\mathrm{cris}}$ which is a sub functor of $D_W$ 
defined by, for any $A\in \mathcal{C}_E$, $D_{W}^{\mathrm{cris}}(A):=\{[W_A]\in D_{W}(A)| W_A $ is potentially crystalline$\}$.
\end{defn}
\begin{lemma}\label{24}
Let $W$ be a potentially crystalline $E$-$B$-pair. 
If $\mathrm{End}_{G_K}(W)=E$, then $D_{W}^{\mathrm{cris}}$ is pro-representable by 
a quotient $R_{W}^{\mathrm{cris}}$ of $R_W$ which is formally smooth of dimension 
equal to $\mathrm{dim}_E(D_{\mathrm{dR}}(\mathrm{ad}(W))/\mathrm{Fil}^0D_{\mathrm{dR}}(\mathrm{ad}(W))) +
\mathrm{dim}_E(\mathrm{H}^0(G_K, \mathrm{ad}(W)))$. 
\end{lemma}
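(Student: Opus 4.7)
The plan is to follow the pattern already established by Propositions $\ref{15}$ and $\ref{17}$ and Corollary $\ref{16}$, but with ``potentially crystalline'' playing the role of ``trianguline''. First, I will verify the relative representability criteria (1)--(3) of the proof of Proposition $\ref{15}$ for the inclusion $D_W^{\mathrm{cris}} \hookrightarrow D_W$. Condition (1) (stability under base change) follows from the fact that $D^L_{\mathrm{cris}}$ commutes with flat base change, so the defining equality $\dim_{L_0} D^L_{\mathrm{cris}}(W_A) = [E:\mathbb{Q}_p] \cdot \mathrm{length}_{\mathbb{Z}_p}(A) \cdot \mathrm{rank}(W)$ is preserved. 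Condition (2) (fiber products) follows from the same flatness and the fact that the $B$-pair fiber product $W_{A'}\times_{W_A}W_{A''}$ constructed in the proof of Proposition $\ref{9}$ has $D^L_{\mathrm{cris}}$ equal to the fiber product of the $D^L_{\mathrm{cris}}$'s. Condition (3) (descent along injections $A\hookrightarrow A'$) uses the natural inclusion $D^L_{\mathrm{cris}}(W_A)\hookrightarrow D^L_{\mathrm{cris}}(W_A\otimes_A A')$ together with a length count to force the rank condition to hold already over $A$. Combined with Proposition $\ref{9}$, this yields pro-representability of $D_W^{\mathrm{cris}}$ by a quotient $R_W^{\mathrm{cris}}$ of $R_W$.

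Next, for formal smoothness, the key point is the $A$-coefficient version of Theorem $\ref{e}$: the category of pot.\ crystalline $A$-$B$-pairs becoming crystalline over $G_L$ (where $L/K$ is a fixed finite Galois extension with $W|_{G_L}$ crystalline) is equivalent to the category of $A$-filtered $(\varphi,G_{L/K})$-modules over $K$, i.e.\ finite free $L_0 \otimes_{\mathbb{Q}_p} A$-modules $D$ with the structures of Definition $\ref{d}$ upgraded to $A$-coefficients. The proof of this equivalence mirrors that of Theorem $\ref{e}$, using the freeness statements of Lemma $\ref{2}$ for the underlying $A$-modules. Given this equivalence, lifting a pot.\ crystalline deformation along a small extension $A'\to A$ becomes a purely linear-algebra problem: freeness of the underlying $L_0\otimes_{\mathbb{Q}_p} A$-module lifts trivially to freeness over $L_0\otimes_{\mathbb{Q}_p} A'$, matrix lifts of the semi-linear $\varphi$- and $G_{L/K}$-actions exist because $A'\to A$ is surjective, and the filtration on $D_L$ (a flag of direct summands of a free module) lifts by the same token. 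Hence $D_W^{\mathrm{cris}}$ is formally smooth.

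Finally, for the tangent space dimension, I will run the argument of Lemma $\ref{10}$ and identify $D_W^{\mathrm{cris}}(E[\varepsilon])$ with the subspace $\mathrm{H}^1_f(G_K,\mathrm{ad}(W))\subseteq \mathrm{H}^1(G_K,\mathrm{ad}(W))$ of classes whose associated extension of $W$ by $W$ is pot.\ crystalline. The dimension of this subspace can be read off from the Bloch-Kato-style exact sequence for $B$-pairs,
\begin{equation*}
0 \to \mathrm{H}^0(G_K,\mathrm{ad}(W)) \to D^L_{\mathrm{cris}}(\mathrm{ad}(W))^{G_{L/K}} \oplus \mathrm{Fil}^0 D_{\mathrm{dR}}(\mathrm{ad}(W)) \to D_{\mathrm{dR}}(\mathrm{ad}(W)) \to \mathrm{H}^1_f(G_K,\mathrm{ad}(W)) \to 0,
\end{equation*}
obtained by comparing the $B$-pair complex defining $\mathrm{H}^\ast(G_K,\mathrm{ad}(W))$ with the analogous complex over $G_L$ and taking $G_{L/K}$-invariants; after cancellation (using that $\dim_E D^L_{\mathrm{cris}}(\mathrm{ad}(W))^{G_{L/K}} = \dim_E D_{\mathrm{dR}}(\mathrm{ad}(W))$ because $\mathrm{ad}(W)$ is pot.\ crystalline, so $L\otimes_{L_0} D^L_{\mathrm{cris}}=D_{\mathrm{dR}}\otimes_K L$ and the $G_{L/K}$-invariants recover $D_{\mathrm{dR}}$ over $K$), the formula $\dim_E\mathrm{H}^1_f(G_K,\mathrm{ad}(W)) = \dim_E(D_{\mathrm{dR}}(\mathrm{ad}(W))/\mathrm{Fil}^0) + \dim_E\mathrm{H}^0(G_K,\mathrm{ad}(W))$ follows. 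The main obstacle will be setting up this Bloch-Kato exact sequence correctly in the pot.\ crystalline $B$-pair setting (as opposed to the classical crystalline representation setting) and carefully identifying the ``pot.\ crystalline extension'' subspace of $\mathrm{H}^1$ with the cokernel appearing there, which requires tracking the $G_{L/K}$-action throughout.
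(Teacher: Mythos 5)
Your overall strategy coincides with the paper's: relative representability of $D_W^{\mathrm{cris}}\hookrightarrow D_W$ via the three conditions from the proof of Proposition $\ref{15}$ (the paper settles condition (3) by simply noting that a sub-object of a potentially crystalline $E$-$B$-pair is again potentially crystalline, where you run a length count; both work), formal smoothness by reduction to deformations of filtered $(\varphi,G_{L/K})$-modules (the paper cites [Ki08a, Prop.\ 3.1.2 and Lemma 3.2.1] where you propose to redo the $A$-coefficient equivalence by hand), and identification of the tangent space with an $\mathrm{H}^1_f$ inside $\mathrm{H}^1(G_K,\mathrm{ad}(W))$ computed by the fundamental exact sequence (the paper invokes Proposition 2.7 of [Na09]).

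The genuine gap is in your dimension count. The four-term sequence you display is not the correct one, and the ``cancellation'' rests on a false equality. First, $D^L_{\mathrm{cris}}(\mathrm{ad}(W))^{G_{L/K}}=(B_{\mathrm{cris}}\otimes_{B_e}\mathrm{ad}(W)_e)^{G_K}=D_{\mathrm{cris}}(\mathrm{ad}(W))$ is a $K_0\otimes_{\mathbb{Q}_p}E$-module of rank at most $n^2$, hence of $E$-dimension at most $[K_0:\mathbb{Q}_p]n^2$, whereas $\dim_E D_{\mathrm{dR}}(\mathrm{ad}(W))=[K:\mathbb{Q}_p]n^2$; these disagree whenever $K/\mathbb{Q}_p$ is ramified, and also whenever $\mathrm{ad}(W)$ fails to be crystalline over $K$ itself. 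Galois descent applies to $L\otimes_{L_0}D^L_{\mathrm{cris}}=L\otimes_K D_{\mathrm{dR}}$ as an $L$-semilinear $G_{L/K}$-module, not to $D^L_{\mathrm{cris}}$ itself, whose $G_{L/K}$-invariants form only a $K_0\otimes_{\mathbb{Q}_p}E$-module. Second, even granting your sequence, its alternating sum gives $\dim\mathrm{H}^1_f=\dim\mathrm{H}^0-\dim_E(D^L_{\mathrm{cris}})^{G_{L/K}}-\dim_E\mathrm{Fil}^0+\dim_E D_{\mathrm{dR}}$, and substituting your claimed equality yields $\dim\mathrm{H}^0-\dim_E\mathrm{Fil}^0$, not the asserted formula. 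The correct sequence, obtained from $0\to B_e\to B_{\mathrm{cris}}\xrightarrow{1-\varphi}B_{\mathrm{cris}}\to 0$ together with $0\to\mathbb{Q}_p\to B_e\oplus B^+_{\mathrm{dR}}\to B_{\mathrm{dR}}\to 0$, reads
\begin{equation*}
0\to\mathrm{H}^0\to D_{\mathrm{cris}}(\mathrm{ad}(W))\oplus\mathrm{Fil}^0D_{\mathrm{dR}}(\mathrm{ad}(W))\to D_{\mathrm{cris}}(\mathrm{ad}(W))\oplus D_{\mathrm{dR}}(\mathrm{ad}(W))\to\mathrm{H}^1_f\to 0,
\end{equation*}
in which the two copies of $D_{\mathrm{cris}}(\mathrm{ad}(W))$ (linked by $1-\varphi$) cancel regardless of their size, giving $\dim_E\mathrm{H}^1_f=\dim_E\mathrm{H}^0+\dim_E(D_{\mathrm{dR}}/\mathrm{Fil}^0)$. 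You should also make explicit, as the paper does via the argument of Lemma $\ref{20.5}$, that any deformation in $D_W^{\mathrm{cris}}(A)$ is crystalline over the \emph{same} fixed $L$ and that $\mathrm{H}^1(G_K,B_{\mathrm{cris}}\otimes_{B_e}\mathrm{ad}(W)_e)\to\mathrm{H}^1(G_L,B_{\mathrm{cris}}\otimes_{B_e}\mathrm{ad}(W)_e)$ is injective, so that the tangent space of $D_W^{\mathrm{cris}}$ is exactly this $\mathrm{H}^1_f$.
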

\begin{proof}
For pro-representability, by Proposition $\ref{12}$, it suffices to relatively representability of 
$D_{W}^{\mathrm{cris}}\hookrightarrow D_W$ as in the proof of Proposition $\ref{15}$. 
But in this case, the condition (1) and (2) is trivial and (3) follows from the fact 
that the sub object of potentially crystalline $E$-$B$-pair is again potentially crystalline. 
Formally smoothness follows from Proposition 3.1.2 and Lemma 3.2.1 of [Ki08a] by using deformations 
of filtered ($\varphi, G_K$)-modules. For dimension, we take a finite Galois extension $L$ of $K$ such that 
$W|_{G_L}$ is crystalline. Then, by the same argument as in the proof of Lemma $\ref{20.5}$, 
 any $W_A\in D^{\mathrm{cris}}_W(A)$ is crystalline when restricted to $G_L$. So it's easy to check 
that the map $D_{W}(E[\varepsilon])\isom \mathrm{H}^1(G_K, \mathrm{ad}(W))$ 
induces isomorphism $D_{W}^{\mathrm{cris}}(E[\varepsilon])\isom \mathrm{Ker}(\mathrm{H}^1(G_K, 
\mathrm{ad}(W))\rightarrow \mathrm{H}^1(G_L, B_{\mathrm{cris}}\otimes_{B_e} \mathrm{ad}(W)_e))$. Then again by the same way as in 
Lemma $\ref{20.5}$, the natural map $\mathrm{H}^1(G_K, B_{\mathrm{cris}}\otimes_{B_e}\mathrm{ad}(W)_e)\rightarrow \mathrm{H}^1(G_L, B_{\mathrm{cris}}\otimes_{B_e}\mathrm{ad}(W)_e)$ is 
injective. So we have isomorphism $D^{\mathrm{cris}}_W(E[\varepsilon])\isom \mathrm{Ker}(\mathrm{H}^1(G_K, \mathrm{ad}(W))
\rightarrow \mathrm{H}^1(G_K, B_{\mathrm{cris}}\otimes_{B_e}\mathrm{ad}(W)_e))$. We can calculate the dimension of this 
group in the same way as in the proof of  Proposition 2.7 [Na09].
\end{proof}
\begin{corollary}\label{25}
Let $W$ be a twisted benign $E$-$B$-pair of rank $n$. 
Then $R_{W}^{\mathrm{cris}}$ is formally smooth of dimension 
$\frac{(n-1)n}{2}[K:\mathbb{Q}_p]+1$.
\end{corollary}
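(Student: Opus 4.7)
The plan is to derive this corollary directly from Lemma~\ref{24}, whose dimension formula reads
\[
\dim_E R_W^{\mathrm{cris}} \;=\; \dim_E\bigl(D_{\mathrm{dR}}(\mathrm{ad}(W))/\mathrm{Fil}^0 D_{\mathrm{dR}}(\mathrm{ad}(W))\bigr) \;+\; \dim_E \mathrm{H}^0(G_K,\mathrm{ad}(W)).
\]
The two hypotheses of Lemma~\ref{24} are: (i) $W$ is potentially crystalline, which holds because twisted benign $B$-pairs are crystabelline by construction; and (ii) $\mathrm{End}_{G_K}(W)=E$, which is precisely Lemma~\ref{22}. From (ii) one immediately gets $\mathrm{H}^0(G_K,\mathrm{ad}(W)) = \mathrm{End}_{G_K}(W) = E$, which contributes the ``$+1$'' to the formula.

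The remaining task is to show
\[
\dim_E\bigl(D_{\mathrm{dR}}(\mathrm{ad}(W))/\mathrm{Fil}^0 D_{\mathrm{dR}}(\mathrm{ad}(W))\bigr) = \tfrac{n(n-1)}{2}[K:\mathbb{Q}_p].
\]
For this I would first observe that, because $\mathrm{ad}(W)\cong W^\vee\otimes W$ is de Rham (being potentially crystalline), the dimension in question equals the number of negative Hodge--Tate weights of $\mathrm{ad}(W)$, counted with multiplicity over every embedding $\sigma\in\mathcal{P}$. Next I would compute these weights from those of $W$: if $W$ has Hodge--Tate weights $\{k_{1,\sigma},\ldots,k_{n,\sigma}\}$ at the embedding $\sigma$, then $W^\vee$ has weights $\{-k_{1,\sigma},\ldots,-k_{n,\sigma}\}$ and so the $\sigma$-component of $\mathrm{ad}(W)$ carries the weights $\{k_{j,\sigma}-k_{i,\sigma}\mid 1\le i,j\le n\}$. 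Note that twisting $W$ by the character $W(\prod_{\sigma}\sigma(x)^{k_\sigma})$ shifts the weights of $W$ by a constant for each $\sigma$ but leaves the differences $k_{j,\sigma}-k_{i,\sigma}$ unchanged, so the computation for twisted benign is literally the same as for benign.

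Now the benign condition~(2) in Definition~\ref{26} (which persists under twisting) ensures that for each $\sigma$ the numbers $k_{1,\sigma}>k_{2,\sigma}>\cdots>k_{n,\sigma}$ are strictly decreasing. Hence $k_{j,\sigma}-k_{i,\sigma}<0$ if and only if $j>i$, which gives exactly $\binom{n}{2}=\frac{n(n-1)}{2}$ strictly negative weights at each $\sigma$. Summing over the $[K:\mathbb{Q}_p]$ embeddings yields the desired dimension $\frac{n(n-1)}{2}[K:\mathbb{Q}_p]$, and adding the $+1$ from the $\mathrm{H}^0$-term completes the proof.

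There is no real obstacle here; the corollary is a bookkeeping exercise once Lemma~\ref{24} is in hand. The only point one has to be careful about is the bookkeeping of Hodge--Tate weights of $\mathrm{ad}(W)$ and the observation that the twist by $W(\prod_\sigma\sigma(x)^{k_\sigma})$ is invisible to $\mathrm{ad}$, so that the strict inequality $k_{1,\sigma}>\cdots>k_{n,\sigma}$ (rather than the normalized condition $k_{1,\sigma}=0$) is what matters for the count of negative weights.
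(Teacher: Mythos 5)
Your proof is correct and follows the same route as the paper: the paper's own proof is just the one-line observation that the corollary follows from Lemma \ref{22} together with $\mathrm{dim}_E\bigl(D_{\mathrm{dR}}(\mathrm{ad}(W))/\mathrm{Fil}^0D_{\mathrm{dR}}(\mathrm{ad}(W))\bigr)=\frac{(n-1)n}{2}[K:\mathbb{Q}_p]$, which is exactly the Hodge--Tate weight count you carry out (condition (2) of Definition \ref{26} giving $\binom{n}{2}$ nonzero negative differences $k_{j,\sigma}-k_{i,\sigma}$ per embedding, the twist being invisible to $\mathrm{ad}$). Your write-up simply supplies the bookkeeping the paper leaves implicit.
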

\begin{proof}
This follows from Lemma $\ref{22}$ and $\mathrm{dim}_E(D_{\mathrm{dR}}(\mathrm{ad}(W))/\mathrm{Fil}^0D_{\mathrm{dR}}(\mathrm{ad}(W)))
=\frac{(n-1)n}{2}[K:\mathbb{Q}_p]$, which follows from the condition (2) in Definition $\ref{26}$.
\end{proof}

\begin{defn}\label{34}
Let $W$ be a twisted benign $E$-$B$-pair of rank $n$ such that $W|_{G_{K_m}}$ is crystalline. 
For any $\tau\in\mathfrak{S}_n$, we define a rank one saturated 
crystabelline $E$-$B$-pair $W'_{\tau}\subseteq W$ such that $D^{K_m}_{\mathrm{cris}}(W'_{\tau})=K_0\otimes_{\mathbb{Q}_p}E
e_{\tau(n)} \subseteq D^{K_m}_{\mathrm{cris}}(W)$. Then we define a sub functor 
$D^{\mathrm{cris}}_{W,\tau}$ of $D_W$ by $D^{\mathrm{cris}}_{W,\tau}(A):=\{[W_A]\in D_W(A)| $ there exists rank one crystabelline 
saturated sub $A$-$B$-pair $W'_A\subseteq W_A$ such that $W'_A\otimes_A E=W'_{\tau} \}$
\end{defn}

\begin{lemma}\label{27}
Under the above condition. The functor $D^{\mathrm{cris}}_{W,\tau}$ is pro-representable by 
a quotient $R^{\mathrm{cris}}_{W,\tau}$ of $R_W$ which is formally smooth and it's dimension 
is equal to $n(n-1)[K:\mathbb{Q}_p] +1$.
\end{lemma}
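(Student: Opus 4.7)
The plan is to combine the techniques of Proposition \ref{15}, Lemma \ref{24}, and Proposition \ref{19}, factoring the analysis through an auxiliary functor. Let $D_{W,W'_\tau}$ denote the functor on $\mathcal{C}_E$ parametrizing pairs $(W_A,W'_A)$ with $W_A\in D_W(A)$ and $W'_A$ a rank one saturated sub $A$-$B$-pair lifting $W'_\tau$ (without any crystabelline condition). Then $D^{\mathrm{cris}}_{W,\tau}$ is naturally the fiber product $D_{W,W'_\tau}\times_{D_{W'_\tau}}D^{\mathrm{cris}}_{W'_\tau}$ via the forgetful map $(W_A,W'_A)\mapsto W'_A$, and I will establish pro-representability, formal smoothness, and the dimension count for $D^{\mathrm{cris}}_{W,\tau}$ by analyzing these three factors.

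For pro-representability, one verifies that $D^{\mathrm{cris}}_{W,\tau}\hookrightarrow D_W$ is relatively representable via the three axioms in the proof of Proposition \ref{15}. The crucial uniqueness of a lift $W'_A\subseteq W_A$, when it exists, reduces by the argument of Lemma \ref{14} to $\mathrm{Hom}_{G_K}(W'_\tau,W/W'_\tau)=0$; this follows from Proposition \ref{i} because by condition $(2)$ of Definition \ref{26} the parameters $\delta_j/\delta'_\tau$ of the triangulation subquotients of $W/W'_\tau$ have strictly positive Hodge-Tate weights $k_{j,\sigma}-k_{n,\sigma}>0$. Formal smoothness is proved in three stages, lifting $(W_A,W'_A)$ along a small extension $A'\twoheadrightarrow A$ with kernel $I$: first lift $W'_A$ to a rank one crystabelline $W'_{A'}$ using Lemma \ref{24}; second lift $Q_A:=W_A/W'_A$ to some $Q_{A'}$, possible because $\mathrm{H}^2(G_K,\mathrm{ad}(W/W'_\tau))=0$ by Proposition \ref{i} applied to each rank one subquotient $W(\delta_i\delta_j^{-1})$, using condition $(1)$ ($\alpha_i/\alpha_j\neq p^{\pm f}$) to exclude the bad character; third lift the extension class, whose obstruction in $I\otimes_E\mathrm{H}^2(G_K,\mathrm{Hom}(W/W'_\tau,W'_\tau))$ vanishes because each rank one subquotient $W(\delta'_\tau\delta_j^{-1})$ has strictly negative Hodge-Tate weights $k_{n,\sigma}-k_{j,\sigma}<0$ and so cannot take the form $|N_{K/\mathbb{Q}_p}|\prod_\sigma\sigma(x)^{k_\sigma}$ with $k_\sigma\geq 1$.

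For the dimension, an analog of Lemma \ref{18} identifies $D_{W,W'_\tau}(E[\varepsilon])\isom\mathrm{H}^1(G_K,\mathrm{ad}_{W'_\tau}(W))$, where $\mathrm{ad}_{W'_\tau}(W):=\{f\in\mathrm{ad}(W):f(W'_\tau)\subseteq W'_\tau\}$ has rank $n^2-n+1$. The short exact sequence
\[
0\to\mathrm{ad}_{W'_\tau}(W)\to\mathrm{ad}(W)\to\mathrm{Hom}(W'_\tau,W/W'_\tau)\to 0
\]
together with $\mathrm{H}^0(\mathrm{ad}(W))=E$ (Lemma \ref{22}) and $\mathrm{H}^2(\mathrm{ad}(W))=0$ (Corollary \ref{23}) gives $\mathrm{H}^0(\mathrm{ad}_{W'_\tau}(W))=E$. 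To obtain $\mathrm{H}^2(\mathrm{ad}_{W'_\tau}(W))=0$ I pick $\tau'\in\mathfrak{S}_n$ with $\tau'(1)=\tau(n)$, so that the triangulation $\mathcal{T}_{\tau'}$ has $W_{\tau',1}=W'_\tau$; then Proposition \ref{19} gives $\mathrm{H}^2(\mathrm{ad}_{\mathcal{T}_{\tau'}}(W))=0$, and the subquotients of $\mathrm{ad}_{W'_\tau}(W)/\mathrm{ad}_{\mathcal{T}_{\tau'}}(W)$ are rank one pieces $W(\delta_i\delta_j^{-1})$ whose $\mathrm{H}^2$ vanishes again by condition $(1)$ and Proposition \ref{i}. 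Euler-Poincar\'e then yields $\dim_E\mathrm{H}^1(\mathrm{ad}_{W'_\tau}(W))=(n^2-n+1)[K:\mathbb{Q}_p]+1$, and the fiber product description together with surjectivity of $D_{W,W'_\tau}(E[\varepsilon])\to D_{W'_\tau}(E[\varepsilon])$ (coming from $\mathrm{H}^2(\mathrm{Hom}(W/W'_\tau,W'_\tau))=0$) gives
\[
\dim D^{\mathrm{cris}}_{W,\tau}(E[\varepsilon])=\bigl((n^2-n+1)[K:\mathbb{Q}_p]+1\bigr)-\bigl([K:\mathbb{Q}_p]+1\bigr)+1=n(n-1)[K:\mathbb{Q}_p]+1.
\]

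The main technical obstacle is the verification $\mathrm{H}^2(G_K,\mathrm{ad}_{W'_\tau}(W))=0$: the long exact sequence only expresses it as a cokernel of $\mathrm{H}^1(\mathrm{ad}(W))\to\mathrm{H}^1(\mathrm{Hom}(W'_\tau,W/W'_\tau))$, and there is no direct verification of the required surjectivity. The proposed reduction to the full-triangulation functor $\mathrm{ad}_{\mathcal{T}_{\tau'}}(W)$ (where Proposition \ref{19} applies) is the cleanest route, but it relies crucially on both the triangulation vanishing and the rank one vanishing from condition $(1)$ on each subquotient of the intermediate quotient.
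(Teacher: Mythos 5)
Your overall architecture coincides with the paper's: the same auxiliary $E$-$B$-pair $\mathrm{ad}_{\tau}(W)=\{f\in\mathrm{ad}(W)\mid f(W'_{\tau})\subseteq W'_{\tau}\}$, the same identification of the tangent space of $D^{\mathrm{cris}}_{W,\tau}$ with the preimage of $\mathrm{H}^1_f(G_K,\mathrm{ad}(W'_{\tau}))$ inside $\mathrm{H}^1(G_K,\mathrm{ad}_{\tau}(W))$, and the same exact sequences. The only organizational difference is that the paper reads the dimension directly off the sequence $0\rightarrow \mathrm{H}^1(G_K,\mathrm{Hom}(W/W'_{\tau},W))\rightarrow \mathrm{H}^1_{f,\tau}\rightarrow \mathrm{H}^1_f(G_K,\mathrm{ad}(W'_{\tau}))\rightarrow 0$, so it never needs your extra vanishing $\mathrm{H}^2(G_K,\mathrm{ad}_{\tau}(W))=0$; your reduction of that vanishing to Proposition $\ref{19}$ via a $\tau'$ with $\tau'(1)=\tau(n)$ is nevertheless valid.

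There is, however, a genuine error in two of your justifications, caused by misidentifying the Hodge--Tate weight of $W'_{\tau}$. You treat $W'_{\tau}$ as having weight $k_{n,\sigma}$ (the smallest). But $W'_{\tau}$ is by Definition $\ref{34}$ the \emph{saturated} rank one sub with $D^{K_m}_{\mathrm{cris}}(W'_{\tau})=K_0\otimes_{\mathbb{Q}_p}Ee_{\tau(n)}$, i.e.\ it equals $W_{\tau',1}$ for any $\tau'$ with $\tau'(1)=\tau(n)$; the non-criticality condition (3) of Definition $\ref{26}$ therefore forces its weight to be $k_{1,\sigma}$, the \emph{largest} one (this is exactly the point exploited in Lemmas $\ref{30}$ and $\ref{32}$). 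Consequently the rank one subquotients of $\mathrm{Hom}(W'_{\tau},W/W'_{\tau})$ have exponents $k_{j,\sigma}-k_{1,\sigma}<0$, which is precisely the dangerous range for $\mathrm{H}^0$ in Proposition $\ref{i}$(1), and those of $\mathrm{Hom}(W/W'_{\tau},W'_{\tau})$ have exponents $k_{1,\sigma}-k_{j,\sigma}\geqq 1$, precisely the dangerous range for $\mathrm{H}^2$ in Proposition $\ref{i}$(2). The two vanishings you need --- $\mathrm{Hom}_{G_K}(W'_{\tau},W/W'_{\tau})=0$ for uniqueness of the lift $W'_A$, and $\mathrm{H}^2(G_K,\mathrm{Hom}(W/W'_{\tau},W'_{\tau}))=0$ for the obstruction and the surjectivity onto $D_{W'_{\tau}}(E[\varepsilon])$ --- are still true, but they follow from condition (1) of Definition $\ref{26}$ ($\alpha_i/\alpha_j\not=1,p^{\pm f}$, which rules out the exceptional characters on the unramified part), not from the sign of the weights as you assert. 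Correct these two justifications (and note that the surjectivity onto $\mathrm{H}^1(G_K,\mathrm{ad}(W'_{\tau}))$ actually requires $\mathrm{H}^2(G_K,\mathrm{Hom}(W/W'_{\tau},W))=0$, which reduces to your vanishing together with $\mathrm{H}^2(G_K,\mathrm{ad}(W/W'_{\tau}))=0$); the rest of the argument then goes through and agrees with the paper's.
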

\begin{proof}
The proof of relatively representability of $D^{\mathrm{cris}}_{W,\tau}\hookrightarrow D_W$
 and the proof of formally smoothness are easily followed from combination of proofs  
 of Proposition $\ref{15}$ and Proposition $\ref{17}$ and Lemma $\ref{24}$.
 Here, we only prove dimension formula. 
 Let $\mathrm{ad}_{\tau}(W):=\{f\in\mathrm{ad}(W)| f(W'_{\tau})\subseteq 
 W'_{\tau}\}$. Then we have the following short exact sequence,
 \begin{equation*}
 0\rightarrow \mathrm{Hom}(W/W'_{\tau}, W)\rightarrow \mathrm{ad}_{\tau}(W)
 \rightarrow \mathrm{ad}(W'_{\tau})\rightarrow 0.
 \end{equation*}
 From this, by taking long exact sequence and by using Proposition $\ref{i}$, we have 
 the following short exact sequence,
 \begin{equation*}
 0\rightarrow \mathrm{H}^1(G_K, \mathrm{Hom}(W/W'_{\tau},W))\rightarrow 
 \mathrm{H}^1(G_K, \mathrm{ad}_{\tau}(W))\rightarrow \mathrm{H}^1(G_K, 
 \mathrm{ad}(W'_{\tau}))\rightarrow 0.
 \end{equation*}
 We define $\mathrm{H}^1_{f,\tau}(G_K, \mathrm{ad}_{\tau}(W))$ 
 by the inverse image of $\mathrm{H}^1_f(G_K,\mathrm{ad}(W'_{\tau}))$ 
 in $\mathrm{H}^1(G_K,\allowbreak \mathrm{ad}_{\tau}(W))$. So we have 
 a short exact sequence,
 \begin{equation*}
 0\rightarrow \mathrm{H}^1(G_K, \mathrm{Hom}(W/W'_{\tau},W))
 \rightarrow \mathrm{H}^1_{f,\tau}(G_K, \mathrm{ad}_{\tau}(W))
 \rightarrow \mathrm{H}^1_f(G_K, \mathrm{ad}(W'_{\tau}))\rightarrow 0.
 \end{equation*}
 Then, in the same way as in Lemma $\ref{24}$, we can show that the natural map $D_{W}(E[\varepsilon])\rightarrow \mathrm{H}^1(G_K, \mathrm{ad}(W))$ 
 induces an isomorphism $D^{\mathrm{cris}}_{W,\tau}(E[\varepsilon])\isom \mathrm{H}^1_{f,\tau}(G_K, 
 \allowbreak \mathrm{ad}_{\tau}(W))$. 
 By using Theorem $\ref{h}$ and Proposition $\ref{i}$, we can calculate that 
 $\mathrm{dim}_E\mathrm{H}^1(G_K, \mathrm{Hom}\allowbreak (W/W'_{\tau},W))=n(n-1)[K:\mathbb{Q}_p]$. 
 Because $\mathrm{ad}(W'_{\tau})=B_E$ is trivial, so we have $\mathrm{H}^1_{f}(G_K, \mathrm{ad}(W'_{\tau}))
 =1$ by Proposition 2.7 of [Na09]. So we have $\mathrm{dim}_E\mathrm{H}^1_{f,\tau}(G_K, \mathrm{ad}_{\tau}(W))\allowbreak =n(n-1)[K:\mathbb{Q}_p] +1$.
 This proves that $R_{W,\tau}^{\mathrm{cris}}$ is dimension $n(n-1)[K:\mathbb{Q}_p]+1$.

\end{proof}
\begin{lemma}\label{28}
Let $W$ be a twisted benign $E$-$B$-pair of rank $n$. 
Let $W_A$ be a deformation of $W$ over $A$ which is potentially crystalline, 
then $[W_A]\in D_{W,\mathcal{T}_{\tau}}(A)$ and $[W_A]\in D^{\mathrm{cris}}_{W,\tau}(A)$ for 
any $\tau\in \mathfrak{S}_n$.
\end{lemma}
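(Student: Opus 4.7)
The plan is to lift the canonical decomposition of $D^{K_m}_{\mathrm{cris}}(W)$ by Frobenius eigenspaces to a decomposition of $D^{K_{m'}}_{\mathrm{cris}}(W_A)$ via Hensel's lemma, and then transport the resulting rank-one sub-$(\varphi,G)$-modules back to $W_A$ through the equivalence of categories with filtered $(\varphi,G_{L/K})$-modules. The starting point is that $W_A$ is actually crystabelline (not merely potentially crystalline): since $W|_{G_{K_m}}$ is crystalline and $W_A|_{G_L}$ is crystalline for some finite Galois $L\supseteq K_m$, one argues by induction on the length of $A$, using small extensions $0\to I\otimes_E W\to W_{A'}\to W_A\to 0$ together with the inflation-restriction sequence exactly as in the proof of Lemma~\ref{20.5}: at each stage the obstruction class lies in an $\mathrm{H}^1$ of a finite group with values in an $E$-vector space and therefore vanishes, giving $W_A|_{G_{K_{m'}}}$ crystalline for some $m'\geq m$.

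Next I would split the Frobenius. The module $D^{K_{m'}}_{\mathrm{cris}}(W_A)$ is free of rank $n$ over $K_0\otimes_{\mathbb{Q}_p}A=\prod_{\sigma:K_0\hookrightarrow E}A$, so it decomposes as a product of free $A$-modules $D^{K_{m'}}_{\mathrm{cris}}(W_A)_\sigma$ of rank $n$ carrying an $A$-linear operator $\varphi^f$. The characteristic polynomial of $\varphi^f$ on $D^{K_{m'}}_{\mathrm{cris}}(W_A)_\sigma$ reduces modulo $m_A$ to $\prod_i(T-\alpha_i)$, and benign condition (1) forces these factors to be pairwise coprime in $E[T]$. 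By Hensel's lemma for the complete local ring $A$, the factorization lifts uniquely to $A[T]$, yielding a canonical $\varphi^f$-stable decomposition
\[
D^{K_{m'}}_{\mathrm{cris}}(W_A)_\sigma = \bigoplus_{i=1}^n M_{i,\sigma,A}
\]
into rank-one free $A$-modules lifting $Ee_{i,\sigma}$. Since $G_{m'/K}$ commutes with $\varphi$, acts $K_0$-linearly (hence preserves each $\sigma$-factor), and the Hensel decomposition is canonical, each $M_{i,\sigma,A}$ is automatically $G_{m'/K}$-stable. Gluing the $\sigma$-components yields a $(\varphi,G_{m'/K})$-stable decomposition $D^{K_{m'}}_{\mathrm{cris}}(W_A)=\bigoplus_{i=1}^n M_{i,A}$ with $M_{i,A}$ free of rank one over $K_0\otimes_{\mathbb{Q}_p}A$, lifting $K_0\otimes_{\mathbb{Q}_p}Ee_i$.

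For any $\tau\in\mathfrak{S}_n$, set $F_{\tau,i,A}:=M_{\tau(1),A}\oplus\cdots\oplus M_{\tau(i),A}$ and equip $K_{m'}\otimes_{K_0}F_{\tau,i,A}$ with the filtration induced from $K_{m'}\otimes_{K_0}D^{K_{m'}}_{\mathrm{cris}}(W_A)$. This is a filtration by sub $A$-filtered $(\varphi,G_{m'/K})$-modules with free rank-one successive quotients. The $A$-coefficient extension of Theorem~\ref{e}, whose proof is essentially identical to the $E$-coefficient case (using Lemma~\ref{2} to guarantee the requisite freeness), then translates this into a triangulation $\mathcal{T}_{\tau,A}$ of $W_A$ by saturated sub $A$-$B$-pairs which reduces to $\mathcal{T}_\tau$ modulo $m_A$, establishing $[W_A]\in D_{W,\mathcal{T}_\tau}(A)$. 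The rank-one saturated sub corresponding to $M_{\tau(n),A}$ is exactly the desired crystabelline $A$-$B$-pair $W'_{\tau,A}\subseteq W_A$ lifting $W'_\tau$, so $[W_A]\in D^{\mathrm{cris}}_{W,\tau}(A)$ as well. The main technical obstacle is the $A$-coefficient extension of Theorem~\ref{e}; modulo this routine generalization, the heart of the proof is the application of Hensel's lemma enabled by the genericity hypothesis $\alpha_i\neq\alpha_j$ on the Frobenius eigenvalues.
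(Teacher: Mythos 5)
Your proposal is correct and follows essentially the same route as the paper: establish that $W_A$ is crystalline over $G_{K_m}$ by the small-extension/inflation-restriction argument (the paper cites Lemma~\ref{24}, which runs this argument; you re-run it directly from Lemma~\ref{20.5}), then split $D^{K_m}_{\mathrm{cris}}(W_A)_\sigma$ into $\varphi^f$-eigenlines using the distinctness of the $\alpha_i$ (the paper calls this ``easy linear algebra''; Hensel's lemma is the clean way to phrase it), observe $G_m$-stability from the commutation of $\varphi$ and $G_m$, and read off the triangulation and the rank-one crystabelline sub through the filtered-module equivalence. The only cosmetic weakening is that you conclude crystallinity over $G_{K_{m'}}$ for some $m'\geq m$ where the argument actually gives it over $G_{K_m}$ itself; this does not affect the conclusion.
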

\begin{proof}
Let $W_A$ be as above. If $W|_{G_{K_m}}$ is crystalline, 
then $W_A|_{G_{K_m}}$ is crystalline by the proof of Lemma $\ref{24}$. 
So it suffices to show that $D^{K_m}_{\mathrm{cris}}(W_A)\isom 
K_0\otimes_{\mathbb{Q}_p}Ae_1\oplus K_0\otimes_{\mathbb{Q}_p}Ae_2\oplus\cdots 
\oplus K_0\otimes_{\mathbb{Q}_p}Ae_n$ such that $K_0\otimes_{\mathbb{Q}_p}Ae_i$ is 
preserved by ($\varphi,G_{m}$) and $\varphi^f(e_i)=\tilde{\alpha_i}e_i$ for a lift $\tilde{\alpha_i}\in A^{\times}$ 
of $\alpha_i\in E^{\times}$ for any $1\leqq i\leqq n$.  For proving this claim, first we note that 
$D^{K_m}_{\mathrm{cris}}(W_A)$ is a free $K_0\otimes_{\mathbb{Q}_p}A$-module of rank $n$ and 
$D^{K_m}_{\mathrm{cris}}(W_A)\otimes_A E\isom D^{K_m}_{\mathrm{cris}}(W)$ by 
Proposition 1.3.4 and Proposition 1.3.5 [Ki09]. 
Then, for any $\sigma:K_0\hookrightarrow E$, we can write $D^{K_m}_{\mathrm{cris}}(W_A)_{\sigma}=
Ae_{1,\sigma}\oplus \cdots \oplus Ae_{n,\sigma}$ such that 
$\varphi^f(e_{i,\sigma})\equiv \alpha_ie_{i,\sigma}$ (mod $m_A$) for any $1\leqq i\leqq n$. 
Then, by easy linear algebra, we can take an $A$-basis $e'_{1,\sigma}, e'_{2,\sigma}, \cdots, e'_{n,\sigma}$ of 
$D^{K_m}_{\mathrm{cris}}(W_A)_{\sigma}$ such that $\varphi^f(e'_{i,\sigma})=\tilde{\alpha}_ie'_{i,\sigma}$ 
for a lift $\tilde{\alpha}_i\in A^{\times}$ of $\alpha_i$ for any $i$. Because $\varphi$ and $G_{m}$ commute and $\alpha_i\not=\alpha_j$, 
so $Ae'_{i,\sigma}$ is preserved by $G_{m}$. If we take $e_i:=e'_{i,\sigma}+\varphi(e'_{i,\sigma})+\cdots 
\varphi^{f-1}(e'_{i,\sigma})\in D^{K_m}_{\mathrm{cris}}(W_A)$, then we have $D^{K_m}_{\mathrm{cris}}(W_A)=
K_0\otimes_{\mathbb{Q}_p}Ae_1\oplus \cdots \oplus K_0\otimes_{\mathbb{Q}_p}Ae_n$ satisfying 
the property of the claim.

\end{proof}

\begin{lemma}\label{29}
Let $W$ be a twisted benign $E$-$B$-pair of rank $n$ and $\tau\in \mathfrak{S}_n$ such that 
$W|_{G_{K_m}}$ is crystalline.
Let $[W_A]\in D_{W,\mathcal{T}_{\tau}}(A)$ be a trianguline deformation over $A$ 
with a lifting of  triangulation $0\subseteq W_{1,A}\subseteq W_{2,A}\subseteq \cdots W_{n,A}=W_A$. 
If $W_{i,A}/W_{i-1,A}$ is Hodge-Tate for any $1\leqq i\leqq n$, then $W_A|_{G_{K_m}}$ is crystalline.
\end{lemma}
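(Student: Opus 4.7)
Plan. I proceed by induction on the rank $n$ of $W$.

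Base case $n = 1$: Write $W_A = W(\delta_A)$ via Proposition \ref{5}, where $\delta_A : K^\times \to A^\times$ lifts the character $\delta$ of $W$. Since $W|_{G_{K_m}}$ is crystalline, $\delta$ takes the form $\delta_\alpha \tilde\delta \prod_\sigma \sigma(x)^{k_\sigma}$ with $\tilde\delta|_{1+\pi_K^m\mathcal{O}_K} = 1$ and $\tilde\delta(\pi_K) = 1$. I would write $\delta_A = \delta_{\tilde\alpha_A} \tilde\delta_A \prod_\sigma \sigma(x)^{k_\sigma}$ for an appropriate lift $\tilde\alpha_A \in A^\times$ of $\alpha$ (chosen so that $\tilde\delta_A(\pi_K) = 1$), and show $\tilde\delta_A|_{1+\pi_K^m\mathcal{O}_K} = 1$. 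The restriction $\tilde\delta_A|_{1+\pi_K^m\mathcal{O}_K}$ has image in the torsion-free pro-$p$ group $1+m_A$ (its reduction is trivial), and the Hodge--Tate hypothesis on $W_A$ with integer weights $\{k_\sigma\}$ forces its Sen weight to vanish; Sen theory then yields triviality on an open subgroup of inertia, and torsion-freeness of the image in $1 + m_A$ forces $\tilde\delta_A|_{1+\pi_K^m\mathcal{O}_K} = 1$, making $W_A|_{G_{K_m}}$ crystalline.

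Inductive step ($n \ge 2$): The sub-object $W_{n-1,A}$ inherits a trianguline deformation of $W_{\tau,n-1}$ (itself twisted benign by Lemma \ref{21}) with Hodge--Tate graded pieces, so $W_{n-1,A}|_{G_{K_m}}$ is crystalline by the inductive hypothesis; the rank one quotient $W_{n,A}/W_{n-1,A}$ is crystalline on $G_{K_m}$ by the base case. Applying the left-exact functor $D^{K_m}_{\mathrm{cris}}$ to
\[
0 \to W_{n-1,A} \to W_A \to W_{n,A}/W_{n-1,A} \to 0,
\]
I obtain $\dim_E D^{K_m}_{\mathrm{cris}}(W_A) \le n \dim_E A$, with equality (equivalently, $W_A|_{G_{K_m}}$ crystalline) equivalent to the vanishing of the associated boundary class in $\mathrm{H}^1(G_{K_m}, B_{\mathrm{cris}} \otimes_{B_e} W_{n-1,A,e} \otimes (W_{n,A}/W_{n-1,A})_e^\vee)$.

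The main obstacle is this cohomological vanishing. My approach is to exploit the strictly decreasing Hodge--Tate weights $k_{1,\sigma} > \cdots > k_{n,\sigma}$ from Definition \ref{26}(2): the ``adjoint'' $A$-$B$-pair $W_{n-1,A} \otimes (W_{n,A}/W_{n-1,A})^\vee$ then has only strictly positive Hodge--Tate weights on each $\sigma$-component. Combined with the fact that $W_A|_{G_{K_m}}$ is itself Hodge--Tate (which follows from a length-of-$A$ induction along the triangulation, using exactness of $D_{\mathrm{HT}}$ in the residue-field setting), a further induction on $\mathrm{length}(A)$ via small extensions $A' \to A$ reduces the required vanishing to an $E$-coefficient statement controlled by Proposition \ref{i} and Tate duality (Theorem \ref{h}(3)), yielding triviality of the obstruction class and hence crystallinity of $W_A|_{G_{K_m}}$.
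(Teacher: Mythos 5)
Your proposal follows essentially the same route as the paper's proof: the rank-one/graded-piece step via Sen's theorem on potentially unramified objects together with torsion-freeness of $1+m_A$, and the inductive step via the vanishing of $\mathrm{Fil}^0$ of the relevant Hom-object (forced by the strictly decreasing weights of Definition $\ref{26}$) to conclude that every extension class is automatically crystalline. The only imprecision is in your final step: the triviality of the obstruction class rests on the Bloch--Kato-type dimension formula $\dim_E\mathrm{H}^1_f=\dim_E\mathrm{H}^0+\dim_E(D_{\mathrm{dR}}/\mathrm{Fil}^0D_{\mathrm{dR}})$ (Proposition 2.7 of [Na09]) rather than on Proposition $\ref{i}$ and Theorem $\ref{h}$ alone, and this is exactly how the paper concludes, by comparing dimensions to get $\mathrm{H}^1_f(G_K,W_{n-1,A}(\delta_{A,n}^{-1}))=\mathrm{H}^1(G_K,W_{n-1,A}(\delta_{A,n}^{-1}))$.
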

\begin{proof}
First, we prove that $(W_{i,A}/W_{i-1,A})|_{G_{K_m}}$ is crystalline with Hodge-Tate weight $\{k_{i,\sigma}\}_{\sigma\in \mathcal{P}}$. 
Because $W_{i,A}/W_{i-1,A}$ is written as successive extensions of $W_{\tau,i}/W_{\tau,i-1}$, $W_{i,A}/W_{i-1,A}$
has Hodge-Tate weight $\{k_{i,\sigma}\}_{\sigma\in\mathcal{P}}$. Then, twisting $W_A$ by a crystalline character $\delta_{\alpha_{\tau(i)}}^{-1}
\prod_{\sigma\in\mathcal{P}}\sigma(x)^{-k_{i,\sigma}}:K^{\times}\rightarrow A^{\times}$, we may assume that 
$W_{i,A}/W_{i-1, A}$ is an $\mathrm{\acute{e}}$tale Hodge-Tate $A$-$B$-pair of rank one with Hodge-Tate weight zero and is 
a deformation of $\mathrm{\acute{e}}$tale potentially unramified $E$-$B$-pair $W(\tilde{\delta}_{\tau(i)})$. Then, by Sen's theorem 
([Se73] or Proposition 5.24 of [Be02]),
$W_{i,A}/W_{i-1,A}$ is potentially unramified, so there exists $\delta:K^{\times}\rightarrow A^{\times}$ a unitary 
homomorphism such that $\delta|_{\mathcal{O}_K^{\times}}$ has a finite image and $W_{i,A}/W_{i-1,A}\isom W(\delta)$ and 
$\delta$ is a lift of $\tilde{\delta}_{\tau(i)}$. Then, because $(1+m_A)\cap A^{\times}_{tor}=\{1\}$, 
$\delta|_{\mathcal{O}_K^{\times}}=\tilde{\delta}_{\tau(i)}|_{\mathcal{O}_K^{\times}}:\mathcal{O}_K^{\times}\rightarrow A^{\times}$, so $W_{i,A}/W_{i-1,A}|_{G_{K_m}}$ is crystalline.
Next, we prove that $W_A|_{G_{K_m}}$ is crystalline by induction on rank $W$. 
When $n=1$, we just have proved this. 
Assume that the lemma is proved for rank $n-1$ case. 
Then $W_{n-1, A}|_{G_{K_m}}$ is crystalline. If we put $W_A/W_{n-1,A}\isom W(\delta_{A,n})$, then we have 
$[W_A]\in \mathrm{H}^1(G_K, W_{n-1,A}(\delta_{A,n}^{-1}))$. By considering Hodge-Tate weight of 
$W_{A}$ and condition (3) of Definition $\ref{26}$, we get $\mathrm{Fil}^0D_{\mathrm{dR}}(W_{n-1,A}(\delta_{A,n}^{-1}))=0$. 
By comparing dimensions by using Proposition 2.7 of [Na09], we have $\mathrm{H}^1_f(G_K,W_{A,n-1}(\delta_{A,n}^{-1}))=\mathrm{H}^1(G_K, 
W_{A,n-1}(\delta_{A,n}^{-1}))$. So  $W_A|_{G_{K_m}}$ is crystalline.

\end{proof}

\begin{lemma}\label{30}
Let $W$ be a twisted benign $E$-$B$-pair of rank $n$ and $W_1$ be a rank one crystabelline sub $E$-$B$-pair of $W$. 
Then the saturation $W_{1}^{sat}:=(W^{sat}_{1,e}, W^{+, sat}_{1,\mathrm{dR}})$ of $W_1$ in $W$ is 
crystabelline and the natural map $\mathrm{Hom}_{G_K}( W_{1}^{sat}, W)
\rightarrow \mathrm{Hom}_{G_K}(W_1,W)$ induced by natural 
inclusion $W_1\hookrightarrow W_{1}^{sat}$ is isomorphism 
between one dimensional $E$-vector spaces.
\end{lemma}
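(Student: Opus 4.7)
The plan is to first handle the crystabelline assertion. Since $W_1^{\mathrm{sat}}$ is a rank-one saturated sub $E$-$B$-pair of the twisted benign $W$, Lemma~\ref{21} shows that $W_1^{\mathrm{sat}}$ is itself twisted benign, and a rank-one twisted benign $E$-$B$-pair is by definition crystabelline. Moreover, since $\varphi^f$ has simple spectrum $\{\alpha_1,\dots,\alpha_n\}$ on each $\sigma$-component of $D^{K_m}_{\mathrm{cris}}(W)$, the only rank-one $(\varphi,G_m)$-stable sub $K_0\otimes_{\mathbb{Q}_p}E$-modules are the lines $K_0\otimes_{\mathbb{Q}_p}Ee_i$; condition~(3) of Definition~\ref{26} guarantees that each of these is a sub filtered $(\varphi,G_m)$-module, so by Theorem~\ref{e} every rank-one saturated crystabelline sub $E$-$B$-pair of $W$ is isomorphic to $W_{\tau',1}$ for some $\tau'\in\mathfrak{S}_n$. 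In particular, $W_1^{\mathrm{sat}}\cong W_{\tau',1}$ for some such $\tau'$.

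For the Hom computation, I would write $W_1\cong W(\delta)$ and $W_1^{\mathrm{sat}}\cong W(\delta^{\mathrm{sat}})$ via Theorem~\ref{g}. Since the $B_e$-parts coincide while $W^+_{1,\mathrm{dR}}\subseteq W^{+,\mathrm{sat}}_{1,\mathrm{dR}}$, the rank-one classification forces $\delta=\delta^{\mathrm{sat}}\prod_{\sigma\in\mathcal{P}}\sigma(x)^{k_\sigma}$ with $k_\sigma\geqq 0$. To bound $\mathrm{Hom}_{G_K}(W_1,W)=\mathrm{H}^0(G_K,W(\delta^{-1})\otimes W)$, I would filter $W$ by $\mathcal{T}_{\tau'}$; using $\delta^{\mathrm{sat}}=\delta_{\alpha_{\tau'(1)}}\tilde\delta_{\tau'(1)}\prod_\sigma\sigma(x)^{k_{1,\sigma}}$ together with the remark after Definition~\ref{26}, the successive graded pieces of $W(\delta^{-1})\otimes W$ are rank-one $E$-$B$-pairs with parameter
\[
\delta_{\alpha_{\tau'(i)}/\alpha_{\tau'(1)}}\cdot(\tilde\delta_{\tau'(i)}/\tilde\delta_{\tau'(1)})\cdot\prod_{\sigma\in\mathcal{P}}\sigma(x)^{k_{i,\sigma}-k_{1,\sigma}-k_\sigma}
\]
for $1\leqq i\leqq n$. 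By Proposition~\ref{i}, $\mathrm{H}^0$ is nonzero precisely when this parameter equals $\prod_\sigma\sigma(x)^{m_\sigma}$ for some $\{m_\sigma\}\subseteq\mathbb{Z}_{\leqq 0}$.

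For $i=1$ the parameter is $\prod_\sigma\sigma(x)^{-k_\sigma}$ with $-k_\sigma\leqq 0$, yielding one copy of $E$. For $i\geqq 2$, restricting to a sufficiently small open subgroup $1+\pi_K^N\mathcal{O}_K$ on which all $\tilde\delta$'s are trivial reduces the equation to $\prod_\sigma\sigma(x)^{m_\sigma-(k_{i,\sigma}-k_{1,\sigma}-k_\sigma)}=1$, which by the linear independence of the embeddings $\sigma:K\hookrightarrow E$ forces $m_\sigma=k_{i,\sigma}-k_{1,\sigma}-k_\sigma$ for every $\sigma$; the same equation then gives $\tilde\delta_{\tau'(i)}/\tilde\delta_{\tau'(1)}=1$ on $\mathcal{O}_K^\times$, and finally evaluation at $\pi_K$ forces $\alpha_{\tau'(i)}/\alpha_{\tau'(1)}=1$, contradicting condition~(1) of Definition~\ref{26}. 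Therefore $\mathrm{H}^0$ vanishes on every graded piece with $i\geqq 2$, and left-exactness of $\mathrm{H}^0$ along the filtration gives $\dim_E\mathrm{Hom}_{G_K}(W_1,W)\leqq 1$; the nonzero inclusion provides the reverse inequality, and the identical argument applied to $W_1^{\mathrm{sat}}$ yields $\dim_E\mathrm{Hom}_{G_K}(W_1^{\mathrm{sat}},W)=1$.

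Finally, the natural restriction map $\mathrm{Hom}_{G_K}(W_1^{\mathrm{sat}},W)\to\mathrm{Hom}_{G_K}(W_1,W)$ is injective because a morphism out of $W_1^{\mathrm{sat}}$ is determined by its $B_e$-component and $W_{1,e}^{\mathrm{sat}}=W_{1,e}$; it sends the inclusion of $W_1^{\mathrm{sat}}$ into $W$ to the nonzero inclusion of $W_1$ into $W$, so it is a nonzero injection between two one-dimensional $E$-vector spaces, hence an isomorphism. The main obstacle is the character bookkeeping in the middle two paragraphs: one has to carefully separate the Hodge-Tate-type characters $\prod_\sigma\sigma(x)^{a_\sigma}$ from the Galois-type characters $\tilde\delta_i$, and invoke conditions~(1) and (3) of Definition~\ref{26} at just the right moments to eliminate every graded piece with $i\geqq 2$.
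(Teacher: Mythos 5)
Your proof is correct, but the middle of your argument takes a genuinely different route from the paper's. For the crystabelline claim the paper simply observes $W_{1,e}=W^{\mathrm{sat}}_{1,e}$ so $D^L_{\mathrm{cris}}(W_1^{\mathrm{sat}})=D^L_{\mathrm{cris}}(W_1)$ has full rank; your appeal to Lemma~\ref{21} is an acceptable substitute, and your identification $W_1^{\mathrm{sat}}\cong W_{\tau',1}$ (forced by the simple $\varphi^f$-spectrum) is exactly what the paper uses implicitly to know the weights of $W_1^{\mathrm{sat}}$. The real divergence is in how the isomorphism $\mathrm{Hom}_{G_K}(W_1^{\mathrm{sat}},W)\isom\mathrm{Hom}_{G_K}(W_1,W)$ is obtained: the paper writes down a short exact sequence of complexes
$0\to \mathrm{C}^{\bullet}(W\otimes(W_1^{\mathrm{sat}})^{\vee})\to \mathrm{C}^{\bullet}(W\otimes W_1^{\vee})\to Q[0]\to 0$ with $Q$ the quotient of the $B^+_{\mathrm{dR}}$-lattices, and kills $\mathrm{H}^0(G_K,Q)$ by a de Rham weight count that uses condition~(3) of Definition~\ref{26}; this yields the isomorphism directly, and only then computes the common dimension to be $1$ via condition~(1). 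You instead compute \emph{both} Hom spaces to be one-dimensional by the same filtration-plus-Proposition~\ref{i} bookkeeping (your separation of the algebraic characters from the $\tilde\delta_i$'s and the evaluation at $\pi_K$ against condition~(1) is exactly right), and then get the isomorphism from the elementary observation that restriction is injective because a morphism of $B$-pairs out of $W_1^{\mathrm{sat}}$ is determined by its $B_e$-component and $W^{\mathrm{sat}}_{1,e}=W_{1,e}$. Your route costs more character bookkeeping but avoids the complex-level exact sequence and the $B^+_{\mathrm{dR}}$-quotient computation entirely; the paper's route is shorter once the vanishing of $\mathrm{H}^0(G_K,Q)$ is granted, and isolates more clearly that condition~(3) is what makes the saturation harmless. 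Both are complete proofs.
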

\begin{proof}
Because $W_{1,e}=W^{sat}_{1,e}$ by Lemma 1.14 of [Na09], so $W_{1}^{sat}$ is crystabelline.
By definition of twisted benign $E$-$B$-pairs, the Hodge-Tate weight of $W_{1}^{sat}$ 
is $\{k_{1,\sigma}\}_{\sigma\in\mathcal{P}}$. Then consider the following short exact sequence 
of complexes of $G_K$-modules defined as in p.890 of [Na09], 
\begin{equation*}
0\rightarrow \mathrm{C}^{\bullet}(W\otimes (W_{1}^{sat})^{\vee})\rightarrow \mathrm{C}^{\bullet}(W
\otimes W_{1}^{\vee})\rightarrow  
(W\otimes W_{1}^{\vee})^+_{\mathrm{dR}})/
(W\otimes(W_{1}^{sat})^{\vee})^+_{\mathrm{dR}}) [0]\rightarrow 0.
\end{equation*}
From this, we have the exact sequence,
\[
\begin{array}{ll}
0\rightarrow \mathrm{H}^0(G_K, W\otimes (W_1^{sat})^{\vee})&\rightarrow 
\mathrm{H}^0(G_K, W\otimes W_1^{\vee}) \\
 &\rightarrow \mathrm{H}^0(G_K, (W\otimes W_1^{\vee})^+_{\mathrm{dR}}/
(W\otimes(W_1^{sat})^{\vee})^+_{\mathrm{dR}})\rightarrow \cdots
\end{array}
\]
By the condition (3) in Definition $\ref{26}$, we have $\mathrm{dim}_E\mathrm{H}^0(G_K, (W\otimes (W_1^{sat})^{\vee})^+_{\mathrm{dR}})
=\mathrm{dim}_E\mathrm{H}^0(G_K, (W\otimes W_1^{\vee})^+_{\mathrm{dR}})=n[K:\mathbb{Q}_p]$, So, by a usual argument 
of de Rham representations, we have $\mathrm{H}^0(G_K, (W\otimes W_1^{\vee})^+_{\mathrm{dR}}/(W\otimes (W_1^{sat})^{\vee})^+_{\mathrm{dR}})=0$. So the map $\mathrm{H}^0(G_K, W\otimes (W_1^{sat})^{\vee}))\isom \mathrm{H}^0(G_K, W\otimes (W_1)^{\vee})
$ is isomorphism. Finally, for dimension, by the condition (1) in Definition $\ref{26}$ and Proposition $\ref{i}$ of [Na09], 
we have $\mathrm{dim}_E\mathrm{H}^0(G_K, W\otimes (W_1^{sat})^{\vee})=1$. These prove the lemma.

\end{proof}
\begin{lemma}\label{31}
Let $W$ be a twisted benign $E$-$B$-pair and let $W_A$ be a deformation 
of $W$ over $A$. If there exists a rank one crystabelline sub $A$-$B$-pair 
$W_{1,A}\subseteq W_A$  such 
that $W_1:=W_{1,A}\otimes_A E\hookrightarrow W_A\otimes_{A}E$ remains injective.
Then the saturation $W_{1,A}^{sat}$ of $W_{1,A}$ in $W_A$ as an $E$-$B$-pair is 
a crystabelline $A$-$B$-pair and $W_A/W^{sat}_{1,A}$ is an $A$-$B$-pair and 
$W_{1,A}^{sat}\otimes_A E\isom W_1^{sat} (\subseteq W)$.
\end{lemma}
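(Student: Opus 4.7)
My plan is to construct $W_{1,A}^{sat}$ first as an abstract rank one crystabelline $A$-$B$-pair and then realize it as a sub $A$-$B$-pair of $W_A$ extending $W_{1,A}\hookrightarrow W_A$. By Lemma $\ref{30}$ and the classification of rank one $E$-$B$-pairs (Theorem $\ref{g}$), the $E$-level saturation takes the form $W_1^{sat}\cong W_1\otimes W(\chi)$ for a unique character $\chi:=\prod_{\sigma\in\mathcal{P}}\sigma(x)^{h_\sigma}$ with integer exponents $h_\sigma$ dictated by the Hodge--Tate weight jumps between $W_1$ and $W_1^{sat}$ (recall that by Lemma $\ref{30}$, the Hodge--Tate weight of $W_1^{sat}$ is $\{k_{1,\sigma}\}_{\sigma\in\mathcal{P}}$). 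I define $\widetilde{W}_{1,A}:=W_{1,A}\otimes W(\chi)$; this is a rank one crystabelline $A$-$B$-pair whose reduction modulo $m_A$ is $W_1^{sat}$ and whose $e$-component coincides with $W_{1,A,e}$, while its $+\mathrm{dR}$-lattice differs from that of $W_{1,A}$ by the specified twist.

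The heart of the proof is to show that the natural map of $A$-modules
\[
\mathrm{Hom}_{G_K}(\widetilde{W}_{1,A},W_A)\longrightarrow \mathrm{Hom}_{G_K}(W_{1,A},W_A),
\]
induced by the canonical inclusion $W_{1,A}\hookrightarrow\widetilde{W}_{1,A}$, is an isomorphism, so that the given inclusion $W_{1,A}\hookrightarrow W_A$ lifts uniquely to a map $\iota:\widetilde{W}_{1,A}\rightarrow W_A$. Following the strategy of Lemma $\ref{30}$, the inclusion $\widetilde{W}_{1,A}^{\vee}\hookrightarrow W_{1,A}^{\vee}$ (same $e$-part, smaller $+\mathrm{dR}$-lattice) yields a short exact sequence of Galois cohomology complexes
\[
0\rightarrow C^{\bullet}(G_K,W_A\otimes\widetilde{W}_{1,A}^{\vee})\rightarrow C^{\bullet}(G_K,W_A\otimes W_{1,A}^{\vee})\rightarrow Q_A[0]\rightarrow 0,
\]
with $Q_A:=(W_A\otimes W_{1,A}^{\vee})^{+}_{\mathrm{dR}}/(W_A\otimes\widetilde{W}_{1,A}^{\vee})^{+}_{\mathrm{dR}}$ concentrated in degree zero. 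The claim reduces to showing $\mathrm{H}^0(G_K,Q_A)=0$. Here $Q_A$ is an $A$-flat, finite length $B^+_{\mathrm{dR}}\otimes A$-module whose $\sigma$-decomposition is a direct sum of truncations of Tate-twisted $\mathbb{C}_p\otimes_{K,\sigma}A$ dictated by the $h_\sigma$ and the Hodge--Tate structure of $W_A\otimes W_{1,A}^{\vee}$, and its reduction $Q_A\otimes_AE$ is precisely the module treated in Lemma $\ref{30}$, whose $G_K$-invariants vanish by the genericity condition (3) of Definition $\ref{26}$. By $A$-flatness of $Q_A$ we have an injection $\mathrm{H}^0(G_K,Q_A)\otimes_AE\hookrightarrow \mathrm{H}^0(G_K,Q_A\otimes_AE)=0$; since only the weight-zero Tate-twist summands of $Q_A$ contribute (each giving a finitely generated piece $K\otimes_{K,\sigma}A$), $\mathrm{H}^0(G_K,Q_A)$ is finitely generated over the Artinian ring $A$, so Nakayama's lemma forces $\mathrm{H}^0(G_K,Q_A)=0$.

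Given the lift $\iota$, I set $W_{1,A}^{sat}:=\mathrm{Im}(\iota)\subseteq W_A$. Injectivity of $\iota$ on the $e$-part equals that of the given inclusion $W_{1,A,e}\hookrightarrow W_{A,e}$ (since the $e$-components agree), and injectivity on the $+\mathrm{dR}$-part follows from injectivity of $W_1^{sat}\hookrightarrow W$ at the reduction together with $A$-flatness. Then $W_{1,A}^{sat}$ is a rank one crystabelline sub $A$-$B$-pair of $W_A$ with $W_{1,A}^{sat}\otimes_AE\cong W_1^{sat}$, inheriting crystabellinity from $\widetilde{W}_{1,A}$. For $W_A/W_{1,A}^{sat}$ to be an $A$-$B$-pair, I check $A$-flatness of the $e$-quotient and of the $+\mathrm{dR}$-quotient modulo $t$: the former follows from the Tor$_1^A$ criterion applied to $0\rightarrow W_{1,A,e}\rightarrow W_{A,e}\rightarrow W_{A,e}/W_{1,A,e}\rightarrow 0$ together with the hypothesis that the $E$-reduction $W_{1,e}\hookrightarrow W_e$ is injective; the latter is obtained similarly, using that $W/W_1^{sat}$ is an $E$-$B$-pair at the reduction level together with the explicit shape of $W_{1,A,\mathrm{dR}}^{+,sat}$ coming from the tensor construction.

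The main obstacle is the vanishing $\mathrm{H}^0(G_K,Q_A)=0$: while it mirrors the $E$-level vanishing in Lemma $\ref{30}$, its $A$-level version genuinely requires identifying the $A$-flat structure of $Q_A$ and performing the Nakayama d\'evissage sketched above, rather than a direct Galois cohomology computation, since $Q_A$ is not finite dimensional over $E$.
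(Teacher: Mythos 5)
Your proposal follows essentially the same route as the paper: identify the algebraic twist character $\chi$ giving $W_1^{sat}\cong W_1\otimes W(\chi)$, set $\widetilde{W}_{1,A}:=W_{1,A}\otimes W(\chi)$, prove the restriction map $\mathrm{Hom}_{G_K}(\widetilde{W}_{1,A},W_A)\to\mathrm{Hom}_{G_K}(W_{1,A},W_A)$ is an isomorphism via the vanishing of an $\mathrm{H}^0$ of a de~Rham quotient, lift the given inclusion, and check the quotient is an $A$-$B$-pair. However, there is a genuine gap in the step where you conclude $\mathrm{H}^0(G_K,Q_A)=0$. You write: ``By $A$-flatness of $Q_A$ we have an injection $\mathrm{H}^0(G_K,Q_A)\otimes_AE\hookrightarrow \mathrm{H}^0(G_K,Q_A\otimes_AE)$, \ldots\ so Nakayama's lemma forces $\mathrm{H}^0(G_K,Q_A)=0$.'' That injection does not follow from $A$-flatness of $Q_A$: taking $\mathrm{H}^0(G_K,-)$ on $0\to m_AQ_A\to Q_A\to Q_A\otimes_A E\to 0$ gives an injection $\mathrm{H}^0(G_K,Q_A)/\mathrm{H}^0(G_K,m_AQ_A)\hookrightarrow\mathrm{H}^0(G_K,Q_A\otimes_AE)$, and the natural surjection $m_A\mathrm{H}^0(G_K,Q_A)\to\mathrm{H}^0(G_K,m_AQ_A)$ need not be an isomorphism, so this is not the same as $\mathrm{H}^0(G_K,Q_A)\otimes_AE$. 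The correct argument is a d\'evissage: filter $Q_A$ by the submodules $m_A^iQ_A$; by $A$-flatness each graded piece is $(m_A^i/m_A^{i+1})\otimes_E(Q_A\otimes_AE)$, whose $G_K$-invariants vanish by Lemma~\ref{30}, and since $A$ is Artinian the filtration terminates, giving $\mathrm{H}^0(G_K,Q_A)=0$. (The paper is also terse at this point, but this is the implicit argument.)

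A second, smaller omission: to conclude that $W_A/W_{1,A}^{sat}$ is an $A$-$B$-pair you only check $A$-flatness, but you must first verify it is an \emph{$E$}-$B$-pair, in particular that $W^+_{A,\mathrm{dR}}/W^{sat,+}_{1,A,\mathrm{dR}}$ is a free $B^+_{\mathrm{dR}}$-module. The paper handles this via the snake lemma exact sequence $0\to I\otimes_E W^+_{\mathrm{dR}}/W^{sat,+}_{1,\mathrm{dR}}\to W^+_{A,\mathrm{dR}}/W^{sat,+}_{1,A,\mathrm{dR}}\to W^+_{A',\mathrm{dR}}/(W^{sat,+}_{1,A,\mathrm{dR}}\otimes_A A')\to 0$ and induction on the length of $A$; your $\mathrm{Tor}^A_1$ criterion addresses $A$-flatness of the mod-$t$ part but not $B^+_{\mathrm{dR}}$-freeness. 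Both gaps are patchable within your framework, but as written they leave the argument incomplete at precisely the points where the $A$-level analogues of Lemma~\ref{30} genuinely require care.
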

\begin{proof}
First, by Proposition 2.14 of [Na09], there exists $\{l_{\sigma}\}_{\sigma\in\mathcal{P}}
\in \prod_{\sigma\in\mathcal{P}}\mathbb{Z}_{\leqq 0}$ such that  $W_1^{sat}\isom W_1\otimes W(\prod_{\sigma\in\mathcal{P}}
\sigma(x)^{l_{\sigma}})$. Then we claim that the inclusion 
$\mathrm{Hom}_{G_K}(W_{1,A}\otimes W_A(\prod_{\sigma\in\mathcal{P}}\sigma(x)^{l_{\sigma}}), W_A)
\rightarrow \mathrm{Hom}_{G_K}(W_{1,A}, W_A)$ induced by the natural inclusion $W_{1,A}\hookrightarrow W_{1,A}
\otimes W_A(\prod_{\sigma\in\mathcal{P}}\sigma(x)^{l_{\sigma}})$ is isomorphism and 
that these groups are rank one free $A$-modules. By the same argument as in the last lemma, 
the cokernel of $\mathrm{Hom}_{G_K}(W_{1,A}\otimes W_A(\prod_{\sigma\in\mathcal{P}}\sigma(x)^{l_{\sigma}}), W_A)
\rightarrow \mathrm{Hom}_{G_K}(W_{1,A}, W_A)$ is contained in $\mathrm{H}^0(G_K, (W_A\otimes W_{1,A}^{\vee})^+_{\mathrm{dR}}/
(W_A\otimes W_{1,A}^{\vee}\otimes W_A(\prod_{\sigma\in \mathcal{P}}\sigma(x)^{-l_{\sigma}}))^+_{\mathrm{dR}})$. This is zero 
from the proof of the last lemma. So the natural inclusion $\mathrm{Hom}_{G_K}(W_{1,A}\otimes W_A(\prod_{\sigma\in\mathcal{P}}\sigma(x)^{l_{\sigma}}), W_A)
\rightarrow \mathrm{Hom}_{G_K}(W_{1,A}, W_A)$ is isomorphism. Next, we prove, by induction on length of A, 
that $\mathrm{Hom}_{G_K}(W_{1,A}, W_A)$ is a free $A$-module of rank one.
When $A=E$, this claim is proved in the above Lemma $\ref{30}$. 
Assume that $A$ is length $n$ and, for a small extension $A\rightarrow A'$, assume that the claim is 
proved for $W_{A'}:=W\otimes_A A'$. Put $I$ the kernel of $A\rightarrow A'$, $W_{1,A'}:=W_{1,A}\otimes_A A'$. From the following  exact sequence,
\begin{equation*}
0\rightarrow I\otimes_E \mathrm{Hom}_{G_K}(W_1, W)
\rightarrow \mathrm{Hom}_{G_K}(W_{1,A}, W_A)\rightarrow \mathrm{Hom}_{G_K}(W_{1,A'}, W_{A'})
\end{equation*}
 and induction hypothesis, we have 
$\mathrm{length}\mathrm{Hom}_{G_K}(W_{1,A}, W_A)\leqq 
\mathrm{length}A$. On the other hand, the fact that given inclusion $\iota:W_{1,A}\hookrightarrow W_A$ remains 
injective after tensoring $E$ and the fact that $\mathrm{dim}_E\mathrm{Hom}_{G_K}(W_1, W)=1$ and induction hypothesis 
imply that the map $A\rightarrow \mathrm{Hom}_{G_K}(W_{1,A}, W_A): a\mapsto a\cdot\iota $ is injective. So we have 
$\mathrm{length}(A)= \mathrm{length} \mathrm{Hom}_{G_K}(W_{1,A}, W_A)$. These prove the claim for $A$.
From these  claims, the given inclusion $\iota:W_{1,A}\hookrightarrow W_A$  factors through a map $\tilde{\iota}:W_{1,A}':=W_{1, A}\otimes W_A(\prod_{\sigma}\sigma(x)^{l_{\sigma}})
\rightarrow W_A$ and this map is injective because the injection of morphisms of $B$-pair determined from the injection 
of $W_e$-part of $B$-pairs and $W_{1,A,e}=(W_{1,A}\otimes W_A(\prod_{\sigma\in\mathcal{P}}\sigma(x)^{l_{\sigma}}))_e$. Then we claim that this $\tilde{\iota}$ gives $W_{1,A}^{sat}\isom W_{1,A}'$ and claim that this satisfies all the properties in this lemma. By induction on length $A$, 
we assume that this claim is proved for $A'$.  First, we prove $W_{A}/W_{1,A}'$ is an $E$-$B$-pair. For proving this, by Lemma 2.1.4 of [Ber08], 
it suffices to show that $W_{A,\mathrm{dR}}^{+}/W_{1,A,\mathrm{dR}}^{'+}$ is a free $B_{\mathrm{dR}}^+$-module. By snake lemma, we have 
\begin{equation*}
0\rightarrow I\otimes_E W^+_{\mathrm{dR}}/W^{sat +}_{1,\mathrm{dR}}\rightarrow 
W^+_{A,\mathrm{dR}}/W^{' +}_{1, A,\mathrm{dR}}\rightarrow W^+_{A',\mathrm{dR}}/(W_{1,A,\mathrm{dR}}^{' +}\otimes_A A')\rightarrow 0.
\end{equation*}
From this and induction hypothesis, $W^+_{A,\mathrm{dR}}/W^{' +}_{1,A,\mathrm{dR}}$ is a free $B^+_{\mathrm{dR}}$-module.
Finally, we prove $A$-flatness of $W_{A}/W_{1,A}^{'}$. This follows from the fact that  the map 
$\tilde{\iota}\otimes \mathrm{id}_E:W_{1,A}^{'}\otimes_A E\hookrightarrow W_{A}\otimes_A E$ is saturated (we can see this 
from the proof of the above first claim). So $W_A/W_{1,A}^{'}$ is an $A$-$B$-pair. We finish the proof of this lemma.

\end{proof}

\begin{lemma}\label{32}
Let $W$ be a twisted benign $E$-$B$-pair of rank $n$.
For any $\tau\in\mathfrak{S}_n$, we have $D_{W,\mathcal{T}_{\tau}}(E[\epsilon])
+D^{\mathrm{cris}}_{W,\tau}(E[\varepsilon])=D_{W}(E[\varepsilon])$.
\end{lemma}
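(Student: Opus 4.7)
My plan is to establish the sharper identity
\[D_{W,\mathcal{T}_{\tau}}(E[\varepsilon])\cap D^{\mathrm{cris}}_{W,\tau}(E[\varepsilon])=D^{\mathrm{cris}}_W(E[\varepsilon]),\]
from which the lemma follows by inclusion--exclusion. Collecting the tangent-space dimensions furnished by Corollary \ref{13}, Proposition \ref{19}, Lemma \ref{27} and Corollary \ref{25}, one checks
\[\dim_E D_{W,\mathcal{T}_\tau}(E[\varepsilon])+\dim_E D^{\mathrm{cris}}_{W,\tau}(E[\varepsilon])-\dim_E D^{\mathrm{cris}}_W(E[\varepsilon])=n^2[K:\mathbb{Q}_p]+1=\dim_E D_W(E[\varepsilon]),\]
so once the intersection is pinned down, the sum $D_{W,\mathcal{T}_\tau}(E[\varepsilon])+D^{\mathrm{cris}}_{W,\tau}(E[\varepsilon])$ must exhaust $D_W(E[\varepsilon])$. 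The inclusion $D^{\mathrm{cris}}_W\subseteq D_{W,\mathcal{T}_\tau}\cap D^{\mathrm{cris}}_{W,\tau}$ is immediate from Lemma \ref{28}, so the substance is the reverse inclusion.

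For that reverse, I take $W_A$ in the intersection over $A=E[\varepsilon]$, equipped with a triangulation $\mathcal{T}_A:0\subset W_{1,A}\subset\cdots\subset W_{n,A}=W_A$ lifting $\mathcal{T}_\tau$ and a rank-one crystabelline saturated sub $W'_A\subset W_A$ lifting $W'_\tau$; by Lemma \ref{29} it will suffice to show each graded piece $W_{i,A}/W_{i-1,A}$ is crystabelline. The key $E$-level observation is that $W'_\tau\cap W_{\tau,i}=0$ for every $i<n$, since their $D^{K_m}_{\mathrm{cris}}$-images are the disjoint submodules $K_0\otimes_{\mathbb{Q}_p}Ee_{\tau(n)}$ and $K_0\otimes_{\mathbb{Q}_p}E(e_{\tau(1)}+\cdots+e_{\tau(i)})$; $A$-flatness combined with Nakayama upgrades this to $W'_A\cap W_{i,A}=0$ at the $A$-level, so the composition $W'_A\hookrightarrow W_A\twoheadrightarrow W_A/W_{n-1,A}$ is an injection of rank-one $A$-$B$-pairs. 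Writing $W'_A=W(\delta_1)$ and $W_A/W_{n-1,A}=W(\delta_2)$ via Proposition \ref{5}, the reductions share $\varphi^f$-eigenvalue $\alpha_{\tau(n)}$ with HT weights $\{k_{1,\sigma}\}$ and $\{k_{n,\sigma}\}$ respectively; non-vanishing of $\mathrm{Hom}_{G_K}(W(\delta_1),W(\delta_2))$, combined with an $A$-coefficient analogue of Proposition \ref{i}(1), forces $\delta_2=\delta_1\cdot\prod_\sigma\sigma(x)^{k_{n,\sigma}-k_{1,\sigma}}$ as $A$-valued characters; since $\delta_1$ is crystabelline and the twist is crystalline, $W_A/W_{n-1,A}$ is crystabelline. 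Iterating inside each quotient $W_A/W_{i,A}$ for $0\le i<n-1$ — by passing to the saturation of the image of $W'_A$ in that quotient, which by condition (3) of Definition \ref{26} lifts the analogous crystabelline sub $W'_{\tilde\tau}\subset W/W_{\tau,i}$, and reducing via Lemma \ref{21} to the rank-$(n-i)$ case of the same setup on the twisted benign quotient — yields inductively that every graded piece is crystabelline.

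The hard part will be two-fold. First, one must establish the Artin-ring upgrade of Proposition \ref{i}(1): that a non-trivial $\mathrm{Hom}_{G_K}$ between two rank-one $A$-$B$-pairs $W(\delta_1),W(\delta_2)$ rigidifies the parameter ratio $\delta_2/\delta_1$ to exactly the integer Hodge--Tate character $\prod_\sigma\sigma(x)^{l_\sigma}$ determined by the reduction, with no infinitesimal freedom beyond. This should follow via the classification in Proposition \ref{5} by checking explicitly that any deformation of $\delta_2/\delta_1$ away from the canonical integer Hodge--Tate lift in $A=E[\varepsilon]$ would obstruct the extension of the $E$-level Hom element. Second, one must verify that the saturation of the image of $W'_A$ in each successive quotient $W_A/W_{i,A}$ is itself crystabelline so that the inductive hypothesis applies; the saturation alters the $B^+_{\mathrm{dR}}$-lattice by a Hodge--Tate twist, and one must ensure this twist is realised exactly over $A$, not merely at the $E$-level, so that crystabelline-ness propagates cleanly through the induction.
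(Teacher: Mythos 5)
Your overall skeleton is the same as the paper's: the dimension count reducing everything to the identity $D_{W,\mathcal{T}_{\tau}}(E[\varepsilon])\cap D^{\mathrm{cris}}_{W,\tau}(E[\varepsilon])=D^{\mathrm{cris}}_W(E[\varepsilon])$, the inclusion $\supseteq$ via Lemma \ref{28}, and the reverse inclusion by induction on rank, transporting $W'_A$ into quotients via saturation (this transport is exactly the content of the paper's Lemma \ref{31}, so your second ``hard part'' is already done there). Your character-rigidity argument for the top graded piece is a legitimate, more hands-on variant, and the Artin-ring upgrade of Proposition \ref{i}(1) you flag is indeed true: if a map $W(\delta_1)\to W(\delta_2)$ of rank-one $A$-$B$-pairs is injective, its reduction is nonzero, and the obstruction to lifting the $E$-level invariant vector with unperturbed parameter lives in $\mathrm{H}^0(G_K,\oplus_\sigma t^{l_\sigma}B^+_{\mathrm{dR}}/B^+_{\mathrm{dR}})=0$ for $l_\sigma\leqq 0$, so $\mathrm{H}^1(G_K,B_E)\to\mathrm{H}^1(G_K,W(\prod_\sigma\sigma(x)^{l_\sigma}))$ is injective and the infinitesimal perturbation $\mu$ must vanish.

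The genuine gap is the bottom graded piece $W_{1,A}$. Your iteration runs over the quotients $W_A/W_{i,A}$: applying the inductive hypothesis to $W_A/W_{1,A}$ (a deformation of the twisted benign $W/W_{\tau,1}$ lying in the corresponding intersection, by Lemma \ref{31}) shows $W_A/W_{1,A}$ is crystabelline, hence the graded pieces in positions $2,\dots,n$ are Hodge--Tate; and your top-piece argument, applied inside any $W_A/W_{i,A}$, only ever re-proves that $W_A/W_{n-1,A}$ is crystabelline. No step of your argument constrains $W_{1,A}$ itself, and this is not automatic: $W_{1,A}$ is an infinitesimal self-extension of the rank-one crystabelline $W_{\tau,1}$, and such deformations are generically \emph{not} Hodge--Tate (this is precisely why $D_{W,\mathcal{T}_\tau}\neq D^{\mathrm{cris}}_W$), so Lemma \ref{29} cannot yet be invoked. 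The paper closes this by using $W'_{1,A}$ at the level of $W_A$ itself, not only in the quotients: $W_A$ contains the saturated crystabelline $W'_{1,A}$ of Hodge--Tate weights $\{k_{1,\sigma}\}$, the quotient $W_A/W_{1,A}$ is crystabelline with the complementary weights $\{k_{i,\sigma}\}_{i\geqq 2}$, and since by condition (2) of Definition \ref{26} these weight sets are disjoint, Tate's vanishing theorem for $\mathbb{C}_p(j)$, $j\neq 0$ (semisimplicity of the Sen operator on each generalized eigenspace) forces $W_A$ to be Hodge--Tate; only then do all graded pieces become Hodge--Tate and Lemma \ref{29} apply. You need to add this (or an equivalent) final step.
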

\begin{proof}
By Corollary $\ref{23}$ and  Lemma $\ref{24}$ and Lemma $\ref{27}$, we have $\mathrm{dim}_ED_{W}(E[\varepsilon])\allowbreak +\mathrm{dim}_ED_W^{\mathrm{cris}}(E[\varepsilon])
=\mathrm{dim}_E D_{W,\mathcal{T}_{\tau}}(E[\varepsilon])+\mathrm{dim}_ED^{\mathrm{cris}}_{W,\tau}(E[\varepsilon])$. 
So it suffices to show that $D_{W,\mathcal{T}_{\tau}}(A)\cap D^{\mathrm{cris}}_{W,\tau}(A)=D_{W}^{\mathrm{cris}}(A)$ for 
any $A\in \mathcal{C}_E$. First, the fact that $D^{\mathrm{cris}}_W(A)\subseteq D_{W,\mathcal{T}_{\tau}}(A)\cap D_{W,\tau}(A)$ 
follows from Lemma $\ref{28}$. 

We prove that $D_{W,\mathcal{T}_{\tau}}(A)\cap D^{\mathrm{cris}}_{W,\tau}(A)\subseteq D_{W}^{\mathrm{cris}}(A)$ by induction on
rank $n$ of $W$. When  $n=1$, this is trivial.
Let $W$ be rank $n$ and assume that the lemma is proved for rank  $n-1$.
Let $[W_A]\in D_{W,\mathcal{T}_{\tau}}(A)\cap D^{\mathrm{cris}}_{W,\tau}(A)$.
Then, by definition of $D_{W,\mathcal{T}_{\tau}}$ and $D^{\mathrm{cris}}_{W,\tau}$, there exists an $A$-triangulation 
$0\subseteq W_{1,A}\subseteq W_{2,A}\subseteq \cdots \subseteq W_{n-1,A}\subseteq W_{n,A}=W_A$ 
such that $W_{i,A}\otimes_A E\isom W_{\tau,i}$ for any $i$ and there exists saturated crystabelline rank one $A$-$B$-pair 
$W'_{1,A}\subseteq W_{A}$ such that $W'_{1,A}\otimes_A E\isom W'_{\tau}$. 
First, we claim that the composition of $W'_{1,A}\hookrightarrow W_A$ and $W_A\rightarrow W_A/W_{1,A}$ 
is  injective. By snake lemma, $\mathrm{Ker}(W'_{1,A}\rightarrow W_A/W_{1,A})$ is a sub $E$-$B$-pair of 
$W_{1, A}$. By the condition (1) of  Definition $\ref{26}$ and by Proposition 2.14 of [Na09], we have 
$\mathrm{Hom}_{G_K}(\mathrm{Ker}(W'_{1, A}\rightarrow W_A/W_{1,A}), W_{1,A})=0$, so $W'_{1,A}\rightarrow 
W_A/W_{1,A}$ is injective. Then, by Lemma $\ref{31}$, the saturation $(W'_{1,A})^{sat}$ of $W'_{1,A}$ in $W_A/W_{1,A}$ 
is a rank one crystabelline $A$-$B$-pair satisfying the same conditions as those of $W'_{1,A}\hookrightarrow W_A$. 
So, by induction hypothesis, $W_A/W_{1,A}$ is crystabelline. Then, by the condition (3) of Definition $\ref{26}$, Hodge-Tate weight of 
$W_A/W_{1,A}$ is $\{k_{2,\sigma},k_{3,\sigma},\cdots,k_{n,\sigma}\}_{\sigma}$ (with multiplicity $[A:E]$). 
Moreover, by (3) of Definition $\ref{26}$, crystabelline $W'_{1,A}$ has Hodge-Tate weight $\{k_{1,\sigma}\}_{\sigma\in\mathcal{P}}$ 
(with multiplicity). Because $k_{1,\sigma}\not= k_{i,\sigma}$ for any $i\not=1$ and there is no extension between different 
Hodge-Tate weight objects by a theorem of Tate, these imply that $W_A$ is a Hodge-Tate $E$-$B$-pair. So, 
by Lemma $\ref{29}$, $W_A$ is crystabelline. So we have that $[W_A]\in D_{W}^{\mathrm{cris}}(A)$.

\end{proof}
The following is the main theorem of this article, which is a crucial theorem for applications to Zariski density.
This theorem was discovered by Chenevier ([Che08], [Ch09]) for any rank case when $K=\mathbb{Q}_p$. 
\begin{defn}
For $R=R_W$ or $R_{W,\mathcal{T}_{\tau}}$, we put $t(R):=\mathrm{Hom}_E(m_R/m^2_R,E)$ the tangent 
space of $R$. Then, for any $\tau\in \mathfrak{S}_n$, we have a canonical inclusion $t(R_{W,\mathcal{T}_{\tau}})\hookrightarrow 
t(R_W)$.
\end{defn}

\begin{thm}\label{33}
Let $W$ be a twisted benign $E$-$B$-pair of rank $n$. 
Then we have $$\sum_{\tau\in\mathfrak{S}_n}t(R_{W,\mathcal{T}_{\tau}})=t(R_W).$$
\end{thm}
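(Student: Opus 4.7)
The plan is to proceed by induction on the rank $n$ of $W$, with Lemma $\ref{32}$ as the engine. When $n=1$ the group $\mathfrak{S}_1$ is trivial, every rank one $E$-$B$-pair is split trianguline with its unique triangulation, and $t(R_{W,\mathcal{T}_{\mathrm{id}}})=t(R_W)$ is tautological. For the inductive step, I fix any $\tau\in\mathfrak{S}_n$. Lemma $\ref{32}$ gives $t(R_W)=t(R_{W,\mathcal{T}_\tau})+t(R^{\mathrm{cris}}_{W,\tau})$, so the theorem reduces to proving
$$t(R^{\mathrm{cris}}_{W,\tau})\subseteq\sum_{\sigma\in\mathfrak{S}_n}t(R_{W,\mathcal{T}_\sigma}).$$

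To prove this inclusion, I would take $[W_A]\in D^{\mathrm{cris}}_{W,\tau}(E[\varepsilon])$ together with the saturated rank one crystabelline sub $W'_{1,A}\subseteq W_A$ deforming $W'_\tau$, and study the quotient $\bar W_A:=W_A/W'_{1,A}$. By Lemma $\ref{21}$, $W/W'_\tau$ is a twisted benign $E$-$B$-pair of rank $n-1$, and $\bar W_A$ is a deformation of it; the inductive hypothesis applied to $W/W'_\tau$ then decomposes $[\bar W_A]=\sum_{\rho\in\mathfrak{S}_{n-1}}[\bar V_A^{(\rho)}]$ in $t(R_{W/W'_\tau})$ with each $[\bar V_A^{(\rho)}]\in t(R_{W/W'_\tau,\mathcal{T}_\rho})$. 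For each $\rho$ there is a unique $\sigma_\rho\in\mathfrak{S}_n$ with $\sigma_\rho(1)=\tau(n)$ that reproduces $\rho$ on the remaining positions, and I lift each $\bar V_A^{(\rho)}$ to an $E[\varepsilon]$-$B$-pair $V_A^{(\rho)}$ fitting in $0\to W'_{1,A}\to V_A^{(\rho)}\to\bar V_A^{(\rho)}\to 0$ and reducing modulo $\varepsilon$ to $W$. Pulling back the triangulation of $\bar V_A^{(\rho)}$ through the quotient $V_A^{(\rho)}\to\bar V_A^{(\rho)}$ and prepending $W'_{1,A}$ exhibits $[V_A^{(\rho)}]\in t(R_{W,\mathcal{T}_{\sigma_\rho}})$.

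It then remains to verify that $[W_A]-\sum_\rho[V_A^{(\rho)}]$ itself lies in $\sum_\sigma t(R_{W,\mathcal{T}_\sigma})$. By construction this difference maps to zero under $t(R^{\mathrm{cris}}_{W,\tau})\to t(R_{W/W'_\tau})$, hence is represented by a deformation whose quotient modulo $W'_{1,A}$ is the trivial deformation of $W/W'_\tau$; any such deformation is automatically split trianguline, with $W'_\tau$ at the bottom and any triangulation of $(W/W'_\tau)\otimes_E E[\varepsilon]$ above, so it is a member of $t(R_{W,\mathcal{T}_{\sigma_\rho}})$ for any of the permutations $\sigma_\rho$ constructed above. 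The main obstacle I expect is the existence of the lift $V_A^{(\rho)}$: this requires constructing a preimage of the extension class of $W$ in $\mathrm{Ext}^1_{G_K}(W/W'_\tau,W'_\tau)$ under the reduction-modulo-$\varepsilon$ map from $\mathrm{Ext}^1_{G_K}(\bar V_A^{(\rho)},W'_{1,A})$, whose obstruction lives in an $\mathrm{Ext}^2$-group that vanishes by Proposition $\ref{i}$ thanks to hypothesis (1) of Definition $\ref{26}$; this is precisely the vanishing used to prove formal smoothness in Proposition $\ref{17}$. Combined with Lemma $\ref{18}$, which identifies tangent spaces with $\mathrm{H}^1$ of $\mathrm{ad}_{\mathcal{T}}(W)$ so that tangent-space sums have an unambiguous cohomological meaning, these ingredients close the induction.
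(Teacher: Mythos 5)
Your proposal is essentially the same induction as the paper's, started the same way (Lemma~\ref{32} reduces the problem, for a fixed $\tau$, to showing that $t(R^{\mathrm{cris}}_{W,\tau})$, or a slight enlargement of it, is contained in $\sum_\sigma t(R_{W,\mathcal{T}_\sigma})$, and then one passes to $W/W'_\tau$ and invokes the rank $n-1$ case), but the two proofs part ways in how the passage to $W/W'_\tau$ is made precise. The paper introduces the auxiliary functor $D_{W,\tau}$, identifies its tangent space with $\mathrm{H}^1(G_K,\mathrm{ad}_\tau(W))$ where $\mathrm{ad}_\tau(W)$ is an honest sub-$E$-$B$-pair of $\mathrm{ad}(W)$, and then establishes the stronger surjectivity of $\bigoplus_{\tau'\in\mathcal{S}_{\tau,n}}\mathrm{H}^1(G_K,\mathrm{ad}_{\mathcal{T}_{\tau'}}(W))\to\mathrm{H}^1(G_K,\mathrm{ad}_\tau(W))$ by chasing four short exact sequences of $B$-pairs, using the $\mathrm{H}^2$-vanishings from Proposition~\ref{i} to make all the relevant connecting maps trivial and reduce to the surjectivity statement for $W/W'_\tau$, which is the inductive hypothesis. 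You instead work directly with deformation classes: quotient $W_A$ by the crystabelline sub, decompose the quotient class by induction, lift each summand back up, and show the error term lies in the kernel of the quotienting map and hence admits a triangulation through $W'_\tau$. This buys a more concrete, geometric picture at the cost of several points that would need fleshing out in a polished writeup: (i) that the assignment $[W_A]\mapsto[W_A/W'_{1,A}]$ is a well-defined \emph{linear} map on $t(R^{\mathrm{cris}}_{W,\tau})$ (it is, because the crystabelline sub is unique by Lemma~\ref{30} and the identification of these tangent spaces with subspaces of $\mathrm{H}^1(G_K,\mathrm{ad}(W))$ via Lemma~\ref{18} makes the map the one induced by $\mathrm{ad}_\tau(W)\to\mathrm{ad}(W/W'_\tau)$); (ii) that the lifts $V_A^{(\rho)}$ exist, which you correctly trace to the vanishing of $\mathrm{H}^2(G_K,\mathrm{Hom}(W/W'_\tau,W'_\tau))$ — the same vanishing the paper exploits in its exact sequence (3); and (iii) that a class in the kernel of the quotienting map genuinely lies in $t(R_{W,\mathcal{T}_{\sigma}})$ for $\sigma(1)=\tau(n)$, which requires noting that a deformation with a crystabelline sub reducing to $W'_\tau$ and trivial quotient admits a lift of the triangulation. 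None of these is a genuine obstruction — each is precisely what the paper's exact-sequence bookkeeping handles systematically — but your proof sketch has them as acknowledged loose ends, whereas the paper's cohomological formulation disposes of them all at once by working with $\mathrm{ad}_\tau(W)$ as a $B$-pair rather than with actual lifts.
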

\begin{proof}
We prove this by induction on $n$. 
When $n=1$, then the theorem is trivial.
Assume that the theorem is true for rank $n-1$ case. 
Let $W$ be a rank $n$ twisted benign $E$-$B$-pair.
We take an element $\tau\in\mathfrak{S}_n$. 
Then we consider the sub functor $D_{W,\tau}$ of $D_W$ by 
$D_{W,\tau}(A):=\{[W_A]\in D_{W}(A)| $ there exists a rank one sub $A$-$B$-pair $W_{1,A}\subseteq W_A$ such 
that the quotient $W_A/W_{1,A}$ is an $A$-$B$-pair and $W_{1,A}\otimes_A E\isom W'_{\tau}\}$ ($W'_{\tau}$ is defined in 
Definition $\ref{34}$).
 Then $D^{\mathrm{cris}}_{W,\tau}$ is a sub functor of $D_{W,\tau}$ 
and we can show that $D_{W,\tau}(E[\varepsilon])\isom \mathrm{H}^1(G_K, \mathrm{ad}_{\tau}(W))$, here $\mathrm{ad}_{\tau}(W)
:=\{f \in \mathrm{ad}(W)| f(W'_{\tau})\subseteq W'_{\tau} \}$. So, by Lemma $\ref{32}$, we have $\mathrm{H}^1(G_K, \mathrm{ad}_{\tau}(W))+
\mathrm{H}^1(G_K, \mathrm{ad}_{\mathcal{T}_{\tau}}(W))=\mathrm{H}^1(G_K, \mathrm{ad}(W))$. 
Because, for any $\tau'\in\mathcal{S}_{\tau,n}:=\{\tau'\in\mathfrak{S}_n| \tau'(1)=\tau(n)\}$,  we have 
$D_{W,\mathcal{T}_{\tau'}}\subseteq D_{W,\tau}$. So we have natural maps $\mathrm{H}^1(G_K, \mathrm{ad}_{\mathcal{T}_{\tau'}}(W))
\rightarrow \mathrm{H}^1(G_K, \mathrm{ad}_{\tau}(W))$. Then it suffices to prove that 
the map $\oplus_{\tau'\in\mathcal{S}_{\tau,n}}\mathrm{H}^1(G_K, \mathrm{ad}_{\mathcal{T}_{\tau'}}(W))\rightarrow 
\mathrm{H}^1(G_K, \mathrm{ad}_{\tau}(W))$ is surjective. We prove this surjection as follows. 
First, by definition, we have the following short exact sequences of $E$-$B$-pairs for any $\tau'\in\mathcal{S}_{\tau,n}$,
\begin{equation}
0\rightarrow \mathrm{Hom}(W/W'_{\tau}, W)\rightarrow \mathrm{ad}_{\tau}(W)\rightarrow \mathrm{ad}(W'_{\tau})
\rightarrow 0,
\end{equation}
\begin{equation}
0\rightarrow \{f\in\mathrm{ad}_{\mathcal{T}_{\tau'}}(W)| f(W'_{\tau})=0\}\rightarrow \mathrm{ad}_{\mathcal{T}_{\tau'}}(W)\rightarrow 
\mathrm{ad}(W'_{\tau})\rightarrow 0.
\end{equation}
Moreover, we have
\begin{equation}
0\rightarrow \mathrm{Hom}(W/W'_{\tau}, W'_{\tau})\rightarrow \mathrm{Hom}(W/W'_{\tau}, W)\rightarrow \mathrm{ad}(W/W'_{\tau})\rightarrow 0,
\end{equation}
\begin{equation}
0\rightarrow \mathrm{Hom}(W/W'_{\tau}, W'_{\tau})\rightarrow \{f\in\mathrm{ad}_{\mathcal{T}_{\tau'}}(W)|f(W'_{\tau})=0\}
\rightarrow \mathrm{ad}_{\mathcal{T}_{\bar{\tau}'}}(W/W'_{\tau})\rightarrow 0.
\end{equation}
Here, for any $\tau'\in\mathcal{S}_{\tau,n}$, we put $\mathcal{T}_{\bar{\tau}'}:0\subseteq W_{\tau',2}/W'_{\tau}\subseteq W_{\tau',3}/W'_{\tau}
\subseteq \cdots \subseteq W_{\tau',n-1}/W'_{\tau}\subseteq W/W'_{\tau}$ the induced triangulation from $\mathcal{T}_{\tau'}$ 
on $W/W'_{\tau}$. We have $\mathrm{H}^2(G_K, \mathrm{Hom}(W/W'_{\tau}, W'_{\tau}))=0$ by the condition $(1)$ of Definition $\ref{26}$ and 
Proposition $\ref{i}$. We have  $\mathrm{H}^2(G_K, \mathrm{ad}_{\mathcal{T}_{\tau}}(W))=0$ from the proof of Proposition 
$\ref{19}$. So from the short exact sequence (4) above, we have $\mathrm{H}^2(G_K, \{f\in\mathrm{ad}_{\tau'}(W)|f(W'_{\tau})=0\})=0$.
From this and from (1) and (2) above, for proving the surjection of $\oplus_{\tau'\in \mathcal{S}_{\tau,n}}\mathrm{H}^1(G_K,\mathrm{ad}_{\mathcal{T}_{\tau'}}(W))\allowbreak 
\rightarrow \mathrm{H}^1(G_K, \mathrm{ad}_{\tau}(W))$, it suffices to prove 
the surjection of $\oplus_{\tau'\in \mathcal{S}_{\tau,n}}\mathrm{H}^1(G_K, \{f\in\mathrm{ad}_{\tau'}(W)|f(W'_{\tau})=0\})
\rightarrow \mathrm{H}^1(G_K, \mathrm{Hom}(W/W'_{\tau}, W))$. By (3) and (4) above and by $\mathrm{H}^2(G_K, \mathrm{Hom}(W/W'_{\tau}, W'_{\tau}))=0$, 
this surjection follows from the surjection of $\oplus_{\tau'\in\mathcal{S}_{\tau,n}}\mathrm{H}^1(G_K, \mathrm{ad}_{\mathcal{T}_{\bar{\tau}'}}(W/W'_{\tau}))\rightarrow \mathrm{H}^1(G_K, \mathrm{ad}(W/W'_{\tau}))$, which  is the induction hypothesis. So we have finished the proof of this theorem

\end{proof}


\begin{thebibliography}{99}
\bibitem[Bel-Ch09]{Bel-Ch09}
J.Bella$\ddot{\i}$che, G.Chenevier, Families of Galois representations and Selmer groups , Ast$\mathrm{\acute{e}}$risque 324 (2009).
\bibitem[Be02]{Be02}
L.Berger, Repr$\mathrm{\acute{e}}$sentations p-adiques et $\mathrm{\acute{e}}$quations diff$\mathrm{\acute{e}}$rentielles, Invent. Math. 148 (2002), 219-284.
\bibitem[Be07]{Be07}
L.Berger, Construction de ($\varphi,\Gamma$)-modules: repr$\mathrm{\acute{e}}$sentations $p$-adiques et $B$-paires,  Algebra and Number Theory, 2 (2008), no. 1, 91--120.
\bibitem[Ch08]{Ch08}
G.Chenevier, Cours Peccot, april 2008.
\bibitem[Ch09]{Ch09}
G.Chenevier, On the infinite fern of Galois representations of unitary type, arXiv:0911.5726v1.
\bibitem[Co08]{Co08}
P.Colmez, Repr$\mathrm{\acute{e}}$sentations triangulines de dimension 2, Ast$\mathrm{\acute{e}}$risque 319 (2008), 213-258.
\bibitem[Co09]{Co09}
P.Colmez, Repr$\mathrm{\acute{e}}$sentations de $\mathrm{GL}_2(\mathbb{Q}_p)$ et $(\varphi,\Gamma)$-modules, preprint, 2009.
\bibitem[Fo94]{Fo94}
J.-M. Fontaine, Le corps des p$\mathrm{\acute{e}}$riodes $p$-adiques,  Ast$\mathrm{\acute{e}}$risque 223 (1994), 59-111.
\bibitem[Ke04]{Ke04}
K.Kedlaya, A p-adic local monodromy theorem, Ann. of Math. (2) 160 (2004), 93-184.
\bibitem[Ke08]{Ke08}
K.Kedlaya, Slope filtrations for relative Frobenius, Ast$\mathrm{\acute{e}}$risque 319 (2008), 259-301.
\bibitem[Ki03]{Ki}
M.Kisin, Overconvergent modular forms and the Fontaine-Mazur 
conjecture, Invent. Math. 153 (2003), 373-454.
\bibitem[Ki08a]{Ki08a}
M.Kisin, Potentially semi-stable deformation rings, J.AMS, 21 (2) (2008), 513-546.
\bibitem[Ki08b]{Ki08b}
M.Kisin,  Deformations of $G_{\mathbb{Q}_p}$ and $\mathrm{GL}_2(\mathbb{Q}_p)$ representations, preprint, 2008.
\bibitem[Ki09]{Ki09}
M.Kisin, Moduli of finite flat group schemes and modularity, Ann. of Math. 170 (3) (2009), 1085-1180.
\bibitem[Ma97]{Ma97}
B.Mazur, An introduction to the deformation theory of Galois representations, Modular forms and Fermat's last theorem, Springer Verlag (1997), 243-311.
\bibitem[Na09]{Na09}
K.Nakamura, Classification of two dimensional split trianguline representations of $p$-adic fields, Compositio Math. 145 (2009), 865-914.
\bibitem[Sch68]{Sch68}
M.Schlessinger, Functors on Artin rings, Trans. A.M.S. 130 (1968), 208-222.
\bibitem[Se73]{Se73}
S.Sen, Lie algebras of Galois groups arising from Hodge-Tate modules, Ann. of Math. 97 (1) (1973), 160-170.
\end{thebibliography}
\end{document}